\newcommand\supp{\mathop{\rm supp}}
\newcommand\esssup{\mathop{\rm ess \, sup}}
\newcommand\essinf{\mathop{\rm ess \, inf}}
\theoremstyle{plain} %text of this environment is typesetted in italics
\newtheorem{theorem}{\indent\sc Theorem}[section]
\newtheorem{lemma}[theorem]{\indent\sc Lemma}
\newtheorem{corollary}[theorem]{\indent\sc Corollary}
\newtheorem{proposition}[theorem]{\indent\sc Proposition}
\theoremstyle{definition} %text of this environment is typesetted in roman letters
\newtheorem{definition}[theorem]{\indent\sc Definition}
\newtheorem{remark}[theorem]{\indent\sc Remark}
\def\address#1#2{\begingroup
\noindent\parbox[t]{7.8cm}{%
\small{\scshape\ignorespaces#1}\par\vskip1ex
\noindent\small{\itshape E-mail address}%
\/: #2\par\vskip4ex}\hfill%
\endgroup}%
\title{\uppercase{Inequalities for weighted spaces with variable exponents}} %title of the paper
\author{
%
%\small{Dedicated to Professor Xxx Yyy on his sixtieth birthday} dedication if necessary
%
\textsc{Pablo Rocha} %names of authors
}
\date{} %leave empty
\begin{document}

\maketitle

%%%%%%%%%%%%%%% footnote %%%%%%%%%%%%%%%%
\footnote{ %2020 MSC numbers
2020 \textit{Mathematics Subject Classification}.
Primary 42B30; Secondary 42B25, 42B35, 46E30.
}
\footnote{ %key words and phrases
\textit{Key words and phrases}:
Fefferman-Stein inequalities, weighted variable Hardy spaces, atomic decomposition, Riesz potential.
}
%\footnote{ %acknowledgment of support etc. if any
%$^{*}$Thanks.
%}
%%%%%%%%%%%%%%%%%%%%%%%%%%%%%%%%%%%%%%%%%

\begin{abstract}
In this article we obtain an "off-diagonal" version of the Fefferman-Stein vector-valued maximal inequality on weighted Lebesgue spaces with variable exponents. As an application of this result and the atomic decomposition developed in \cite{Ho1} we prove, for certain exponents 
$q(\cdot)$ in $\mathcal{P}^{\log}(\mathbb{R}^{n})$ and certain weights $\omega$, that the Riesz potential $I_{\alpha}$, with
$0 < \alpha < n$, can be extended to a bounded operator from $H^{p(\cdot)}_{\omega}(\mathbb{R}^{n})$ into 
$L^{q(\cdot)}_{\omega}(\mathbb{R}^{n})$, for $\frac{1}{p(\cdot)} := \frac{1}{q(\cdot)} + \frac{\alpha}{n}$.
\end{abstract}

\section{Introduction}

The theory of variable exponent Hardy spaces $H^{p(\cdot)}(\mathbb{R}^{n})$ was developed independently by Nakai and Sawano in \cite{Nakai} and by Cruz-Uribe and Wang in \cite{Uribe3}. Both theories prove equivalent definitions in terms of maximal operators using different approaches. In \cite{Nakai, Uribe3}, one of their main goals is the atomic decomposition of elements in $H^{p(\cdot)}$, as an application of the atomic decomposition they proved that singular integrals are bounded on $H^{p(\cdot)}$. Later in \cite{Rocha1}, the author jointly with Urciuolo proved  the $H^{p(\cdot)} - L^{q(\cdot)}$ boundedness of certain generalized Riesz potentials and the $H^{p(\cdot)} - H^{q(\cdot)}$ boundedness of Riesz potential via the infinite atomic and molecular decomposition developed in \cite{Nakai}. In \cite{Rocha3}, the author gave another proof of the results obtained in \cite{Rocha1}, but by using the finite atomic decomposition developed in \cite{Uribe3}.

Recently, Kwok-Pun Ho in \cite{Ho1} developed the weighted theory for variable Hardy spaces on $\mathbb{R}^{n}$. He established the atomic decompositions for the weighted Hardy spaces with variable exponents $H^{p(\cdot)}_{\omega}(\mathbb{R}^{n})$ and also revealed some intrinsic structures of atomic decomposition for Hardy type spaces. His results generalize the infinite atomic decompositions obtained in \cite{Garcia, Bui, Torch, Nakai}.

The main results of this work are contained in Theorems \ref{Feff-Stein ineq} and \ref{weighted Hp-Lq} below. The first result concerns to 
an "off-diagonal" version of the Fefferman-Stein vector-valued maximal inequality on weighted variable Lebesgue spaces (originally due to Harboure, Mac\'ias and Segovia \cite{Harboure}). Our second result states, for certain exponent functions $q(\cdot)$ and certain weights 
$\omega$, the boundedness of Riesz potential $I_{\alpha}$ from $H^{p(\cdot)}_{\omega}$ into $L^{p(\cdot)}_{\omega}$ where $0 < \alpha < n$ and $\frac{1}{p(\cdot)} :=  \frac{1}{q(\cdot)} + \frac{\alpha}{n}$. This result is achieved via the off-diagonal Fefferman-Stein 
inequality (Theorem \ref{Feff-Stein ineq}), the atomic decomposition established in \cite{Ho1}, and the following additional assumption: for every cube $Q \subset \mathbb{R}^{n}$
\begin{equation} \label{hipotesis admisible}
\| \chi_Q \|_{L^{q(\cdot)}_{\omega}} \approx |Q|^{-\alpha/n} \| \chi_Q \|_{L^{p(\cdot)}_{\omega}}.
\end{equation}
If $\omega \equiv 1$ and $q(\cdot)$ is log-H\"older continuous (locally and at infinite), then (\ref{hipotesis admisible}) holds. This case was proved in \cite{Rocha1}. In this article we will give non trivial examples of power weights satisfying (\ref{hipotesis admisible}). So, (\ref{hipotesis admisible}) is an admissible hypothesis.

This paper is organized as follows. Section 2 gives the definition of weighted variable Lebesgue spaces $L^{p(\cdot)}_{\omega}$ 
and weighted variable Hardy spaces $H^{p(\cdot)}_{\omega}$ and some of their preliminary results. Section 3 presents some basics 
properties of the set of weights $\mathcal{W}_{p(\cdot)}$ used to define $H^{p(\cdot)}_{\omega}$ and some results about the size of the cubes in the $L^{p(\cdot)}_{\omega}$-norm. The off-diagonal version of the Fefferman-Stein inequality on weighted variable Lebesgue spaces is established in Section 4. The $H^{p(\cdot)}_{\omega} - L^{q(\cdot)}_{\omega}$ boundedness of Riesz potential $I_{\alpha}$ is proved in Section 5. Finally, Section 6 gives non trivial examples of power weights satisfying (\ref{hipotesis admisible}).

\

{\bf Notation.} The symbol $A\lesssim B$ stands for the inequality $A \leq c B$ for some constant $c$. The symbol $A \approx B$ 
stands for $B \lesssim A \lesssim B$. We denote by $Q\left( z,r\right)$ the cube centered at $z \in \mathbb{R}^{n}$ with side lenght $r$. Given a cube $Q = Q(z, r)$, we set $kQ = Q(z,kr)$ and $l(Q) = r$. For a measurable subset $E \subseteq \mathbb{R}^{n}$ we denote 
by $|E|$ and $\chi_{E}$ the Lebesgue measure of $E$ and the characteristic function of $E$ respectively. Given a real number $s \geq 0$, 
we write $\lfloor s \rfloor$ for the integer part of $s$. As usual we denote with $\mathcal{S}(\mathbb{R}^{n})$ the space of smooth and rapidly decreasing functions and with $\mathcal{S}'(\mathbb{R}^{n})$ the dual space. If $\beta$ is the multiindex 
$\beta =(\beta_{1},...,\beta _{n})$, then $|\beta| =\beta _{1}+...+\beta _{n}.$
 
\section{Preliminaries}

Let $p(\cdot) : \mathbb{R}^{n} \to (0, \infty)$ be a measurable function. Given a measurable set $E$, let
\[
p_{-}(E) = \essinf_{ x \in E } p(x), \,\,\,\, \text{and} \,\,\,\, p_{+}(E) = \esssup_{x \in E} p(x).
\]
When $E = \mathbb{R}^{n}$, we will simply write $p_{-} := p_{-}(\mathbb{R}^{n})$ and $p_{+} := p_{+}(\mathbb{R}^{n})$.

Given a measurable function $f$ on $\mathbb{R}^{n}$, define the modular $\rho$ associated with $p(\cdot)$ by
\[
\rho(f) = \int_{\mathbb{R}^{n}} |f(x)|^{p(x)} dx.
\]
We define the variable Lebesgue space $L^{p(\cdot)} = L^{p(\cdot)}(\mathbb{R}^{n})$ to be the set of all measurable functions $f$ such that, for some $\lambda > 0$, $\rho \left( f/\lambda \right) < \infty$. This becomes a quasi normed space when equipped with the Luxemburg norm
\[
\| f \|_{L^{p(\cdot)}} = \inf \left\{ \lambda > 0 : \rho \left( f/\lambda \right) \leq 1 \right\}.
\]

Given a weight $\omega$, i.e.: a locally integrable function on $\mathbb{R}^{n}$ such that $0 < \omega(x) < \infty$ almost everywhere, we define the weighted variable Lebesgue space $L^{p(\cdot)}_{\omega}$ as the set of all measurable functions 
$f : \mathbb{R}^{n} \to \mathbb{C}$ such that $\| f \omega \|_{L^{p(\cdot)}} < \infty$. If $f \in L^{p(\cdot)}_{\omega}$, we define 
its "norm" by
\[
\| f \|_{L^{p(\cdot)}_\omega} := \| f \omega \|_{L^{p(\cdot)}}. 
\]

\begin{lemma} \label{potencia s}
Given a measurable function $p(\cdot) : \mathbb{R}^{n} \to (0, \infty)$ with $0 < p_{-} \leq p_{+} < \infty$ and a weight 
$\omega$, then for every $s > 0$,
\[
\| f \|_{L^{p(\cdot)}_\omega}^{s} = \| |f|^{s} \|_{L^{p(\cdot)/s}_{\omega^{s}}}.
\]
\end{lemma}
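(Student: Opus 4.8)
The plan is to unwind both norms to their definitions as Luxemburg norms, reduce to an elementary identity between the associated modulars, and finish with a change of variable in the defining infimum.

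First I would reduce to the unweighted case. Since $|f|^{s}\omega^{s} = (|f|\omega)^{s}$ pointwise, we have $\||f|^{s}\|_{L^{p(\cdot)/s}_{\omega^{s}}} = \|(|f|\omega)^{s}\|_{L^{p(\cdot)/s}}$ and $\|f\|_{L^{p(\cdot)}_{\omega}} = \||f|\omega\|_{L^{p(\cdot)}}$, so it suffices to prove that for every nonnegative measurable $g$,
\[
\|g^{s}\|_{L^{p(\cdot)/s}} = \|g\|_{L^{p(\cdot)}}^{s}.
\]
At this point I would also record that $0 < p_{-} \le p_{+} < \infty$ forces $0 < (p/s)_{-} \le (p/s)_{+} < \infty$, so that $L^{p(\cdot)/s}$ is a legitimate (quasi-normed) space and $\omega^{s}$ is again a weight.

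The key step is the modular identity: writing $\rho$ for the modular associated with $p(\cdot)$ and $\rho_{p(\cdot)/s}$ for the one associated with $p(\cdot)/s$, one has, for every $\mu > 0$,
\[
\rho_{p(\cdot)/s}\!\left(\frac{g^{s}}{\mu}\right) = \int_{\mathbb{R}^{n}} \left( \frac{g(x)^{s}}{\mu} \right)^{p(x)/s} dx = \int_{\mathbb{R}^{n}} \left( \frac{g(x)}{\mu^{1/s}} \right)^{p(x)} dx = \rho\!\left(\frac{g}{\mu^{1/s}}\right),
\]
which is immediate from the pointwise identity $(t^{s}/\mu)^{p/s} = (t/\mu^{1/s})^{p}$ for $t \ge 0$ and $p > 0$. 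Then I would substitute $\mu = \lambda^{s}$: since $\lambda \mapsto \lambda^{s}$ is a strictly increasing bijection of $(0,\infty)$ onto itself, the set $\{\mu > 0 : \rho_{p(\cdot)/s}(g^{s}/\mu) \le 1\}$ equals $\{\lambda^{s} : \lambda > 0,\ \rho(g/\lambda) \le 1\}$, and taking infima (with the convention $\inf \emptyset = +\infty$, which also disposes of the case $\|g\|_{L^{p(\cdot)}} = \infty$) yields $\|g^{s}\|_{L^{p(\cdot)/s}} = \|g\|_{L^{p(\cdot)}}^{s}$ by monotonicity of $t \mapsto t^{s}$. Taking $g = |f|\omega$ completes the proof.

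The only mildly delicate point is bookkeeping rather than mathematics: one must handle the change of variable in the infimum cleanly, including the degenerate cases where the modular is never $\le 1$ or is $\le 1$ only for large $\lambda$, and one should check that no hypothesis beyond $0 < p_{-} \le p_{+} < \infty$ is actually needed — it is not, since only positivity of the exponent is used in the identity $(t^{s}/\mu)^{p/s} = (t/\mu^{1/s})^{p}$.
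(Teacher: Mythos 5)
Your proposal is correct and follows essentially the same route as the paper: unwind the Luxemburg norm, use the pointwise identity $\left(t^{s}/\mu\right)^{p/s}=\left(t/\mu^{1/s}\right)^{p}$ to match the modulars, and substitute $\lambda=\mu^{s}$ in the defining infimum. The extra bookkeeping you flag (degenerate cases of the infimum, reduction to the unweighted case) is harmless but not needed beyond what the paper already records.
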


\begin{proof}
The condition $p_{+} < \infty$ implies that $|\{ x : p(x) = \infty \}| = 0$. Then, for every $s>0$,
\[
\| |f|^{s} \|_{L^{p(\cdot)/s}_{\omega^{s}}} = 
\inf \left\{ \lambda > 0 : \int \left( \frac{|f(x)| \, \omega(x)}{\lambda^{1/s}} \right)^{p(x)} dx \leq 1 \right\}
\]
\[
= \inf \left\{ \mu^{s} > 0 : \int \left( \frac{|f(x)| \, \omega(x)}{\mu} \right)^{p(x)} dx \leq 1 \right\} = 
\| f \|_{L^{p(\cdot)}_\omega}^{s}.
\]
\end{proof}

For a measurable function $p(\cdot) : \mathbb{R}^{n} \to [1, \infty)$, its conjugate function $p'(\cdot)$ is defined by 
$\frac{1}{p(x)} + \frac{1}{p'(x)} = 1$. We have the following generalization of H\"older's
inequality and an equivalent expression for the $L^{p(\cdot)}_{\omega}$-norm.

\begin{lemma} (H\"older's inequality) \label{Holder ineq}
Let $p(\cdot) : \mathbb{R}^{n} \to [1, \infty)$ be a measurable function and $\omega$ be a locally integrable function such that 
$0 < \omega(x) < \infty$ almost everywhere. Then, there exists a constant $C > 0$ such that
\[
\int_{\mathbb{R}^{n}} |f(x) g(x)| dx \leq C \| f \|_{L^{p(\cdot)}_{\omega}} \| g \|_{L^{p'(\cdot)}_{\omega^{-1}}}.
\]
\end{lemma}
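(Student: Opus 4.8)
The plan is to derive this weighted statement from the classical (unweighted) Hölder inequality on variable Lebesgue spaces, so that no new analytic input is required beyond the definition of the weighted norm. Recall that for any measurable exponent $p(\cdot):\mathbb{R}^{n}\to[1,\infty]$ there is a constant $C=C_{p(\cdot)}$, depending on $p(\cdot)$ only through $p_{-}$ and $p_{+}$, such that $\int_{\mathbb{R}^{n}}|F(x)G(x)|\,dx\leq C\,\|F\|_{L^{p(\cdot)}}\,\|G\|_{L^{p'(\cdot)}}$ for all $F\in L^{p(\cdot)}$ and $G\in L^{p'(\cdot)}$; this is a standard fact, whose proof rests on the pointwise Young inequality $ab\leq \frac{a^{p(x)}}{p(x)}+\frac{b^{p'(x)}}{p'(x)}$ (for $a,b\geq 0$ and $p(x)\in(1,\infty)$) together with the comparison between the Luxemburg norm and the modular.

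First I would set $F:=f\omega$ and $G:=g\omega^{-1}$. Since $\omega$ is a weight, $0<\omega(x)<\infty$ for a.e.\ $x$, hence $\omega(x)\,\omega(x)^{-1}=1$ almost everywhere and therefore $|F(x)G(x)|=|f(x)g(x)|$ for a.e.\ $x$. If $\|f\|_{L^{p(\cdot)}_{\omega}}=\infty$ or $\|g\|_{L^{p'(\cdot)}_{\omega^{-1}}}=\infty$ the asserted inequality is trivial, so we may assume both quantities are finite; then, by the definition of the weighted spaces, $F\in L^{p(\cdot)}$ and $G\in L^{p'(\cdot)}$, and the unweighted Hölder inequality above applies to $F$ and $G$. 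Unwinding the definitions, $\|F\|_{L^{p(\cdot)}}=\|f\omega\|_{L^{p(\cdot)}}=\|f\|_{L^{p(\cdot)}_{\omega}}$ and likewise $\|G\|_{L^{p'(\cdot)}}=\|g\omega^{-1}\|_{L^{p'(\cdot)}}=\|g\|_{L^{p'(\cdot)}_{\omega^{-1}}}$ (for the latter only positivity and a.e.\ finiteness of $\omega^{-1}$ are needed, which are automatic). Combining $\int_{\mathbb{R}^{n}}|fg|=\int_{\mathbb{R}^{n}}|FG|\leq C\,\|F\|_{L^{p(\cdot)}}\,\|G\|_{L^{p'(\cdot)}}$ with these identities yields the claim, with the same constant $C_{p(\cdot)}$.

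The only genuinely delicate points, which I would handle as in the standard unweighted argument rather than belabor here, are: (i) the set where $p(x)=1$ and hence $p'(x)=\infty$, on which Young's inequality degenerates and one argues instead with the elementary bound $|FG|\leq |F|\cdot\esssup_{\{p=1\}}|G|$ on that set; and (ii) the fact that, because the hypothesis permits $p_{+}=\infty$, the modular need not equal $1$ at the Luxemburg norm, which is precisely why a constant $C$ (rather than $1$) appears. An alternative, fully self-contained route would skip the citation and prove the displayed inequality directly: after reducing to the case $\|f\|_{L^{p(\cdot)}_{\omega}}=\|g\|_{L^{p'(\cdot)}_{\omega^{-1}}}=1$ by homogeneity, integrate the pointwise Young inequality applied to $a=|f(x)|\omega(x)$ and $b=|g(x)|\omega(x)^{-1}$ over the region $\{1<p(x)<\infty\}$, and treat $\{p(x)=1\}$ separately as in (i); this gives the bound with an explicit $C$ expressed through $p_{-}$. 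I expect step (i)–(ii), i.e.\ the bookkeeping for the extremal values of $p(\cdot)$, to be the only real obstacle, the rest being a direct substitution.
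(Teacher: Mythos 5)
Your proposal is correct and matches the paper's approach: the paper simply cites the unweighted variable-exponent H\"older inequality (\cite[Lemma 3.2.20]{Diening2}), and since $\| f \|_{L^{p(\cdot)}_{\omega}}$ is by definition $\| f\omega \|_{L^{p(\cdot)}}$, your substitution $F=f\omega$, $G=g\omega^{-1}$ is exactly the (implicit) reduction being used. The only difference is that you additionally sketch the proof of the cited unweighted inequality, which the paper leaves to the reference.
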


\begin{proof} The lemma follows from \cite[Lemma 3.2.20]{Diening2}.
\end{proof}

\begin{proposition} \label{norma equivalente}
Let $p(\cdot) : \mathbb{R}^{n} \to [1, \infty)$ be a measurable function and $\omega$ be a locally integrable function such that 
$0 < \omega(x) < \infty$ almost everywhere. Then
\[
\| f \|_{L^{p(\cdot)}_{\omega}} \approx \sup \left\{ \int_{\mathbb{R}^{n}} |f(x) g(x)| dx : \| g \|_{L^{p'(\cdot)}_{\omega^{-1}}} \leq 1
\right\}.
\]
\end{proposition}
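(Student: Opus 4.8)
The plan is to deduce the stated equivalence from the (unweighted) \emph{norm conjugate formula} for variable Lebesgue spaces, which asserts that for any exponent $p(\cdot) : \mathbb{R}^n \to [1,\infty)$ one has $\|h\|_{L^{p(\cdot)}} \approx \sup\{ \int_{\mathbb{R}^n} |h(x) k(x)|\,dx : \|k\|_{L^{p'(\cdot)}} \le 1 \}$ (see, e.g., \cite[Corollary 3.2.14]{Diening2}), together with the elementary observation that multiplication by $\omega$ is a bijection between the relevant unit balls.

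First I would record the easy inequality ``$\gtrsim$''. If $g$ satisfies $\|g\|_{L^{p'(\cdot)}_{\omega^{-1}}} \le 1$, then the weighted H\"older inequality (Lemma \ref{Holder ineq}) gives $\int_{\mathbb{R}^n} |f(x) g(x)|\,dx \le C \|f\|_{L^{p(\cdot)}_\omega} \|g\|_{L^{p'(\cdot)}_{\omega^{-1}}} \le C \|f\|_{L^{p(\cdot)}_\omega}$, and taking the supremum over all such $g$ shows that the right-hand side of the claimed identity is $\lesssim \|f\|_{L^{p(\cdot)}_\omega}$.

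For the reverse inequality ``$\lesssim$'' I would use the reduction $\|f\|_{L^{p(\cdot)}_\omega} = \|f\omega\|_{L^{p(\cdot)}}$ and apply the norm conjugate formula to the function $h = f\omega$: this produces functions $k$ with $\|k\|_{L^{p'(\cdot)}} \le 1$ for which $\int_{\mathbb{R}^n} |f(x)\omega(x) k(x)|\,dx$ is comparable to $\|f\omega\|_{L^{p(\cdot)}}$. Since $\omega$ is a weight, i.e. $0 < \omega(x) < \infty$ a.e., the assignment $k \mapsto g := k\omega$ is a measurability-preserving bijection from $\{ k : \|k\|_{L^{p'(\cdot)}} \le 1 \}$ onto $\{ g : \|g\|_{L^{p'(\cdot)}_{\omega^{-1}}} \le 1 \}$ (its inverse being $g \mapsto g\omega^{-1}$), because $\|g\|_{L^{p'(\cdot)}_{\omega^{-1}}} = \|g\omega^{-1}\|_{L^{p'(\cdot)}} = \|k\|_{L^{p'(\cdot)}}$; moreover $\int |f\omega k| = \int |fg|$ under this correspondence. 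Hence the two suprema (over $k$ in the unweighted ball and over $g$ in the weighted ball) coincide, and the norm conjugate formula yields $\|f\|_{L^{p(\cdot)}_\omega} = \|f\omega\|_{L^{p(\cdot)}} \lesssim \sup\{ \int_{\mathbb{R}^n} |f(x)g(x)|\,dx : \|g\|_{L^{p'(\cdot)}_{\omega^{-1}}} \le 1 \}$.

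I do not expect any real obstacle here: the one point worth a sentence of justification is the well-definedness of the bijection $k \mapsto k\omega$, which is exactly where the defining properties of a weight ($0 < \omega < \infty$ almost everywhere, local integrability) are used so that passing between $L^{p'(\cdot)}$ and $L^{p'(\cdot)}_{\omega^{-1}}$ introduces no measurability issues. In fact this argument shows that the whole proposition --- not just one direction --- is a formal consequence of the unweighted norm conjugate formula, the H\"older step above being merely a direct alternative for the easy half.
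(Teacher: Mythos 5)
Your argument is correct and follows essentially the same route as the paper, which simply cites \cite[Corollary 3.2.14]{Diening2} (the unweighted norm conjugate formula); your reduction via the substitution $g = k\omega$ is exactly the implicit step behind that citation. Nothing further is needed.
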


\begin{proof} The proposition follows from \cite[Corollary 3.2.14]{Diening2}.
\end{proof}

Now, we briefly present the basics of weighted variable Hardy spaces theory and recall the atomic decomposition
developed by K.-P. Ho in \cite{Ho1}.

We introduce the weights used in \cite{Ho1} to define weighted Hardy spaces with variable exponents.

\begin{definition} \label{pesos Wp}
Let $p(\cdot) : \mathbb{R}^{n} \to (0, \infty)$ be a measurable function with $0 < p_{-} \leq p_{+} < \infty$. We define
$\mathcal{W}_{p(\cdot)}$ as the set of all weights $\omega$ such that
\[
(i) \,\, \text{there exists} \,\, 0 < p_{\ast} < \min \{1, p_{-} \} \,\, \text{such that} \,\, 
\| \chi_Q \|_{L^{p(\cdot)/p_{\ast}}_{\omega^{p_{\ast}}}} < \infty, \,\,\, \text{and}
\]
\[ 
\| \chi_Q \|_{L^{(p(\cdot)/p_{\ast})'}_{\omega^{-p_{\ast}}}} < \infty, \,\,\, \text{for all cube} \,\, Q;
\]
\[
(ii) \,\, \text{there exist $\kappa > 1$ and $s > 1$ such that Hardy-Littlewood maximal}
\]
\[ 
\text{operator $M$ is bounded on $L^{(sp(\cdot))'/\kappa}_{\omega^{-\kappa/s}}$}. 
\]
\end{definition}

\begin{remark}
In \cite[Definition 2.3]{Ho1}, the author considers $p_{\ast} = \min \{ 1, p_{-} \}$. We observe that the whole theory is still valid if we take $0 < p_{\ast} < \min \{1, p_{-} \}$ instead of $p_{\ast} = \min \{ 1, p_{-} \}$. This slight modification allows us to compare the sets
$\mathcal{W}_{q(\cdot)}$ and $\mathcal{W}_{p(\cdot)}$ related by $\frac{1}{p(\cdot)} - \frac{1}{q(\cdot)} = \frac{\alpha}{n}$ with
$0 < \alpha < n$ (see Proposition \ref{WqcWp} below).
\end{remark}

For a measurable function $p(\cdot) : \mathbb{R}^{n} \to (0, \infty)$ such that $0 < p_{-}\leq p_{+} < \infty$ and 
$\omega \in \mathcal{W}_{p(\cdot)}$, in \cite{Ho1} the author give a variety of distinct approaches, based on
differing definitions, all lead to the same notion of weighted variable Hardy space $H^{p(\cdot)}_{\omega}$. 

We recall some terminologies and notations from the study of maximal functions. Given $N \in \mathbb{N}$, define 
\[
\mathcal{F}_{N}=\left\{ \varphi \in \mathcal{S}(\mathbb{R}^{n}):\sum\limits_{\left\vert \mathbf{\beta }\right\vert \leq N}\sup\limits_{x\in \mathbb{R}%
^{n}}\left( 1+\left\vert x\right\vert \right) ^{N}\left\vert \partial^{%
\mathbf{\beta }}\varphi (x)\right\vert \leq 1\right\}.
\]
For any $f \in \mathcal{S}'(\mathbb{R}^{n})$, the grand maximal function of $f$ is given by 
\[
\mathcal{M}f(x)=\sup\limits_{t>0}\sup\limits_{\varphi \in \mathcal{F}_{N}}\left\vert \left( \varphi_t \ast f\right)(x) \right\vert,
\]
where $\varphi_t(x) = t^{-n} \varphi(t^{-1} x)$. 

\begin{definition} Let $p(\cdot):\mathbb{R}^{n} \to ( 0,\infty)$, $0 < p_{-} \leq p_{+} < \infty $, and 
$\omega \in \mathcal{W}_{p(\cdot)}$. The weighted variable Hardy space $H^{p(.)}_{\omega}(\mathbb{R}^{n})$ is the set of 
all $f \in \mathcal{S}'(\mathbb{R}^{n})$ for which $\| \mathcal{M}f \|_{L^{p(\cdot)}_{\omega}} < \infty$. In this case we define 
$\| f \|_{H^{p(\cdot)}_{\omega}} := \| \mathcal{M}f \|_{L^{p(\cdot)}_{\omega}}$.
\end{definition}

\begin{definition} Let $p(\cdot):\mathbb{R}^{n} \to ( 0,\infty)$, $0 < p_{-} \leq p_{+} < \infty $, $p_{0} > 1$, and 
$\omega \in \mathcal{W}_{p(\cdot)}$. Fix an integer $d \geq 1$. A function $a(\cdot)$ on $\mathbb{R}^{n}$ is called a 
$\omega-(p(\cdot), p_{0}, d)$ atom if there exists a cube $Q$ such that

$a_{1})$ $\supp ( a ) \subset Q$,

$a_{2})$ $\| a \|_{L^{p_{0}}}\leq \frac{| Q |^{\frac{1}{p_{0}}}}{\| \chi _{Q} \|_{L^{p(\cdot)}_{\omega}}}$,

$a_{3})$ $\displaystyle{\int} x^{\beta}  a(x) \, dx = 0$ for all $| \beta | \leq d$.
\end{definition}

Next, we introduce two indices, which are related to the intrinsic structure of the atomic decomposition of $H^{p(\cdot)}_{\omega}$. Given $\omega \in \mathcal{W}_{p(\cdot)}$, we write
\[
s_{\omega, \, p(\cdot)} := \inf \left\{ s \geq 1 : M \,\, \text{is bounded on} \,\, L^{(sp(\cdot))'}_{\omega^{-1/s}} \right\}
\]
and
\[
\mathbb{S}_{\omega, \, p(\cdot)} := \left\{ s \geq 1 : M \,\, \text{is bounded on} \,\, L^{(sp(\cdot))'/\kappa}_{\omega^{-\kappa/s}} \,\,
\text{for some} \,\, \kappa > 1 \right\}.
\]
Then, for every fixed $s \in \mathbb{S}_{\omega, \, p(\cdot)}$, we define
\[
\kappa_{\omega, \, p(\cdot)}^{s} := \sup \left\{ \kappa > 1 : M \,\, \text{is bounded on} \,\, L^{(sp(\cdot))'/\kappa}_{\omega^{-\kappa/s}} \right\}.
\]
The index $\kappa_{\omega, \, p(\cdot)}^{s}$ is used to measure the left-openness of the boundedness of $M$ on the family 
$\left\{ L^{(sp(\cdot))'/\kappa}_{\omega^{-\kappa/s}} \right\}_{\kappa > 1}$. The index $s_{\omega, \, p(\cdot)}$ is related to the vanishing moment condition and the index $\kappa_{\omega, \, p(\cdot)}^{s}$ is related to the size condition of the atoms 
(see \cite[Theorems 5.3 and 6.3]{Ho1}).

The following three results are crucial in obtaining the $H^{p(\cdot)}_{\omega} - L^{q(\cdot)}_{\omega}$ boundedness of the Riesz potential.
The first is a supporting result \cite[Lemma 5.4]{Ho1}, the second is a combination of \cite[Theorem 6.2]{Ho1} and 
\cite[Theorem 3.1]{Rocha2}, the last one was proved in \cite[Proposition 2.1]{Ho2}. 

\begin{proposition} (\cite[Lemma 5.4]{Ho1}) \label{bk functions}
Let $p(\cdot) : \mathbb{R}^{n} \to (0, \infty)$ be a measurable function with $0 < p_{-} \leq p_{+} < \infty$ and 
$\omega \in \mathcal{W}_{p(\cdot)}$. Let $s \in \mathbb{S}_{\omega, \, p(\cdot)}$ and $\{ \lambda_j \}_{j \in \mathbb{N}}$ be a sequence of scalars. If $r > (\kappa_{\omega, \, p(\cdot)}^{s})'$, $\{ b_j \}_{j \in \mathbb{N}}$ is a sequence in $L^{r}$ such that, for every $j$, $\supp(b_j)$ is contained in a cube $Q_{j}$ and
\[
\| b_j \|_{L^{r}} \leq A_j |Q_j|^{1/r},
\]
where the $A_j$'s are positive scalars for all $j \in \mathbb{N}$, then
\[
\left\| \sum_{j \in \mathbb{N}} \lambda_j b_j  \right\|_{L^{sp(\cdot)}_{\omega^{1/s}}} \leq C 
\left\| \sum_{j \in \mathbb{N}} A_j |\lambda_j| \chi_{Q_j}  \right\|_{L^{sp(\cdot)}_{\omega^{1/s}}}
\]
for some $C > 0$ independent of $\{ \lambda_j \}_{j \in \mathbb{N}}$, $\{ b_j \}_{j \in \mathbb{N}}$ and $\{ A_j \}_{j \in \mathbb{N}}$.
\end{proposition}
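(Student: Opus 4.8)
The plan is a duality argument: bound the left-hand side by testing against the dual space, transfer the mass of each $b_j$ onto a maximal function by H\"older's inequality at the level of the cube $Q_j$, and then reduce everything to the boundedness of the Hardy--Littlewood maximal operator on the relevant weighted variable space. Throughout I may assume $b_j,\lambda_j,A_j\ge 0$ and that only finitely many terms occur (the general case follows by replacing $b_j,\lambda_j$ with $|b_j|,|\lambda_j|$ and letting the number of terms go to infinity by monotone convergence). Also, since $s\in\mathbb{S}_{\omega,\,p(\cdot)}$, the exponent $sp(\cdot)$ takes values in $[1,\infty)$, so Proposition~\ref{norma equivalente} and Lemma~\ref{Holder ineq} apply to it with the weight $\omega^{1/s}$.

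First I would use Proposition~\ref{norma equivalente} to write
\[
\Big\| \sum_j \lambda_j b_j \Big\|_{L^{sp(\cdot)}_{\omega^{1/s}}} \approx \sup \int_{\mathbb{R}^n} \Big( \sum_j \lambda_j b_j(x) \Big) h(x)\,dx,
\]
the supremum being over nonnegative $h$ with $\|h\|_{L^{(sp(\cdot))'}_{\omega^{-1/s}}}\le 1$. Fix such an $h$. Since $\supp(b_j)\subset Q_j$, H\"older's inequality with exponents $r,r'$ and the hypothesis $\|b_j\|_{L^r}\le A_j|Q_j|^{1/r}$ give, for every $x\in Q_j$,
\[
\int_{Q_j} b_j h \le A_j |Q_j|^{1/r} \Big( \int_{Q_j} h^{r'} \Big)^{1/r'} \le A_j |Q_j| \big( M(h^{r'})(x) \big)^{1/r'},
\]
because $\frac{1}{|Q_j|}\int_{Q_j} h^{r'}\le M(h^{r'})(x)$. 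As the left-hand side does not depend on $x$, averaging over $x\in Q_j$ yields $\int_{Q_j} b_j h \le A_j \int_{\mathbb{R}^n}(M(h^{r'}))^{1/r'}\chi_{Q_j}$, and summing in $j$ (monotone convergence),
\[
\int_{\mathbb{R}^n} \Big( \sum_j \lambda_j b_j \Big) h \le \int_{\mathbb{R}^n} \Big( \sum_j A_j \lambda_j \chi_{Q_j} \Big) \big( M(h^{r'}) \big)^{1/r'}.
\]
Now H\"older's inequality in $L^{sp(\cdot)}_{\omega^{1/s}}$ (Lemma~\ref{Holder ineq}) bounds the right-hand side by $C\,\big\|\sum_j A_j\lambda_j\chi_{Q_j}\big\|_{L^{sp(\cdot)}_{\omega^{1/s}}}\,\big\|(M(h^{r'}))^{1/r'}\big\|_{L^{(sp(\cdot))'}_{\omega^{-1/s}}}$, so it remains to prove the weighted maximal-type bound $\big\|(M(h^{r'}))^{1/r'}\big\|_{L^{(sp(\cdot))'}_{\omega^{-1/s}}}\lesssim\|h\|_{L^{(sp(\cdot))'}_{\omega^{-1/s}}}$ with constant independent of $h$.

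This is the only nontrivial point, and it is where the hypothesis $r>(\kappa_{\omega,\,p(\cdot)}^{s})'$ enters. Rewriting it as $r'<\kappa_{\omega,\,p(\cdot)}^{s}$ and using that $\kappa_{\omega,\,p(\cdot)}^{s}$ is a supremum, I can pick some $\kappa>r'$ (in particular $\kappa>1$) for which $M$ is bounded on $L^{(sp(\cdot))'/\kappa}_{\omega^{-\kappa/s}}$. Since $r'\le\kappa$, the power-mean inequality applied inside each averaging cube gives the pointwise domination $(M(h^{r'}))^{1/r'}\le(M(h^{\kappa}))^{1/\kappa}$, so by monotonicity of the $L^{(sp(\cdot))'}_{\omega^{-1/s}}$-norm it suffices to control $\|(M(h^{\kappa}))^{1/\kappa}\|_{L^{(sp(\cdot))'}_{\omega^{-1/s}}}$. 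Finally, the homogeneity of the Luxemburg norm (Lemma~\ref{potencia s}) gives $\|(M(h^{\kappa}))^{1/\kappa}\|_{L^{(sp(\cdot))'}_{\omega^{-1/s}}}^{\kappa}=\|M(h^{\kappa})\|_{L^{(sp(\cdot))'/\kappa}_{\omega^{-\kappa/s}}}$, which by the boundedness of $M$ on $L^{(sp(\cdot))'/\kappa}_{\omega^{-\kappa/s}}$ is $\lesssim\|h^{\kappa}\|_{L^{(sp(\cdot))'/\kappa}_{\omega^{-\kappa/s}}}=\|h\|_{L^{(sp(\cdot))'}_{\omega^{-1/s}}}^{\kappa}$; collecting the constants (none of which depends on $\{\lambda_j\}$, $\{b_j\}$, $\{A_j\}$) finishes the argument.

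I expect the main obstacle to be exactly this last reduction: converting the qualitative information carried by $\kappa_{\omega,\,p(\cdot)}^{s}$ (that $M$ is bounded on $L^{(sp(\cdot))'/\kappa}_{\omega^{-\kappa/s}}$ for $\kappa$ arbitrarily close to $\kappa_{\omega,\,p(\cdot)}^{s}$ from below) into an honest weighted bound for the sublinear operator $h\mapsto(M(h^{r'}))^{1/r'}$ that H\"older's inequality actually forces on us, where $r'$ is fixed by the $L^r$-normalization of the $b_j$'s. The device that makes it work is the elementary pointwise estimate $(M(h^{r'}))^{1/r'}\le(M(h^{\kappa}))^{1/\kappa}$ valid for $\kappa\ge r'$, which lets the ``room to the left of $\kappa_{\omega,\,p(\cdot)}^{s}$'' absorb the exponent $r'$. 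Beyond this, one only has to check the admissibility of the exponents involved ($sp(\cdot)\in[1,\infty)$, $r'\in(1,\kappa_{\omega,\,p(\cdot)}^{s})$, and $(sp(\cdot))'/\kappa\ge 1$) so that Proposition~\ref{norma equivalente}, Lemmas~\ref{Holder ineq} and~\ref{potencia s}, and the maximal bound all apply.
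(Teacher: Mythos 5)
Your argument is correct. Note that the paper does not actually prove this proposition: it is imported verbatim as \cite[Lemma 5.4]{Ho1}, so there is no in-paper proof to compare against. Your duality scheme --- test against $h$ with $\| h \|_{L^{(sp(\cdot))'}_{\omega^{-1/s}}}\leq 1$ via Proposition~\ref{norma equivalente}, use H\"older on each $Q_j$ with exponents $r,r'$ to dominate $\int_{Q_j}b_j h$ by $A_j\int (M(h^{r'}))^{1/r'}\chi_{Q_j}$, and then absorb the resulting maximal operator using the boundedness of $M$ on $L^{(sp(\cdot))'/\kappa}_{\omega^{-\kappa/s}}$ --- is exactly the standard mechanism behind Ho's lemma, and every step checks out: the equivalence $r>(\kappa_{\omega,\,p(\cdot)}^{s})'\Leftrightarrow r'<\kappa_{\omega,\,p(\cdot)}^{s}$, the extraction of an admissible $\kappa\in(r',\kappa_{\omega,\,p(\cdot)}^{s}]$ from the supremum defining $\kappa_{\omega,\,p(\cdot)}^{s}$, the pointwise bound $(M(h^{r'}))^{1/r'}\leq (M(h^{\kappa}))^{1/\kappa}$, and the rescaling via Lemma~\ref{potencia s} are all valid, and the constants are visibly independent of $\{\lambda_j\}$, $\{b_j\}$, $\{A_j\}$. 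The only caveats are the ones you already flagged as implicit in the paper's framework (that $sp(\cdot)\geq 1$ so that Proposition~\ref{norma equivalente} and Lemma~\ref{Holder ineq} apply), so the proposal supplies a complete proof where the paper offers only a citation.
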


\begin{theorem} \label{w atomic decomp}
Let $1 < p_0 < \infty$, $p(\cdot) : \mathbb{R}^{n} \to (0, \infty)$ be a measurable function with $0 < p_{-} \leq p_{+} < \infty$ and 
$\omega \in \mathcal{W}_{p(\cdot)}$. Then, for every $s > 1$ fixed, and every 
$f \in H^{p(\cdot)}_{\omega}(\mathbb{R}^{n}) \cap L^{s}(\mathbb{R}^{n})$ there exist a sequence of scalars $\{ \lambda_j \}$, a sequence of cubes $\{ Q_j \}$ and $\omega - (p(\cdot), p_0, \lfloor n s_{\omega, \, p(\cdot)} - n \rfloor)$ atoms $a_j$ supported on $Q_j$ such that 
\begin{equation} \label{Hpw norm}
\left\| \sum_{j} \left( \frac{|\lambda_j |}{\| \chi_{Q_j} \|_{L^{p(\cdot)}_{\omega}}} \right)^{\theta} \chi_{Q_j} \right\|_{L^{p(\cdot)/\theta}_{\omega^{\theta}}}^{1/\theta} \leq C \| f \|_{H^{p(\cdot)}_{\omega}}, \,\,\, \text{for all} \,\,\, 0 < \theta < \infty,
\end{equation}
and $f = \sum_{j} \lambda_j a_j$ converges in $L^{s}(\mathbb{R}^{n})$.
\end{theorem}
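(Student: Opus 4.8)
The plan is to assemble the statement from two already-available ingredients: the weighted variable Calder\'on--Zygmund atomic decomposition of Ho \cite[Theorem 6.2]{Ho1}, which produces the scalars $\{\lambda_j\}$, the cubes $\{Q_j\}$ and the atoms $\{a_j\}$ together with the modular control, and the $L^{s}$-convergence refinement \cite[Theorem 3.1]{Rocha2}, which upgrades the convergence of the atomic series from the distributional sense to convergence in $L^{s}$ whenever $f$ additionally lies in $L^{s}$. So I would first recall the construction: for $f \in H^{p(\cdot)}_{\omega}$ and each $k \in \mathbb{Z}$ set $\Omega_{k} = \{ x : \mathcal{M}f(x) > 2^{k} \}$, an open set; pick a Whitney covering $\{ Q_{k,i} \}_{i}$ of $\Omega_{k}$ of bounded overlap with a subordinate smooth partition of unity, and let $f = \sum_{k,i} \lambda_{k,i} a_{k,i}$ be the resulting decomposition, where each $a_{k,i}$ is a normalized $\omega$-$(p(\cdot), p_{0}, d)$ atom supported in a fixed dilate of $Q_{k,i}$, and $|\lambda_{k,i}| \lesssim 2^{k}\, \| \chi_{Q_{k,i}} \|_{L^{p(\cdot)}_{\omega}}$. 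The order $d = \lfloor n s_{\omega, p(\cdot)} - n \rfloor$ of the vanishing moments is exactly the one forced by the supporting estimates of \cite[Theorems 5.3 and 6.2]{Ho1}: it is the integer that makes the moment corrections in the Calder\'on--Zygmund decomposition compatible with the boundedness of $M$ on the spaces $L^{(sp(\cdot))'}_{\omega^{-1/s}}$ for $s$ slightly larger than $s_{\omega, p(\cdot)}$, while the integrability exponent $p_{0}$ of the atoms is the one fixed beforehand.

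For the norm inequality (\ref{Hpw norm}) the point is that it reduces to a pointwise estimate together with Lemma \ref{potencia s}, uniformly in $\theta$. Since $|\lambda_{k,i}| / \| \chi_{Q_{k,i}} \|_{L^{p(\cdot)}_{\omega}} \lesssim 2^{k}$ and, for each fixed $k$, the cubes $\{ Q_{k,i} \}_{i}$ have bounded overlap and sit inside $\Omega_{k}$, one gets for every $\theta > 0$
\[
\sum_{k,i} \left( \frac{|\lambda_{k,i}|}{\| \chi_{Q_{k,i}} \|_{L^{p(\cdot)}_{\omega}}} \right)^{\theta} \chi_{Q_{k,i}}(x) \ \lesssim \ \sum_{k} 2^{k\theta}\, \chi_{\Omega_{k}}(x) \ \lesssim \ \big( \mathcal{M}f(x) \big)^{\theta},
\]
the last step by summing the geometric series over those $k$ with $2^{k} < \mathcal{M}f(x)$ (the implied constants depend on $\theta$ and $n$, not on $f$). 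Taking $\| \cdot \|_{L^{p(\cdot)/\theta}_{\omega^{\theta}}}$ on both sides, using monotonicity and homogeneity of the Luxemburg norm, and then invoking Lemma \ref{potencia s} with $s = \theta$ gives
\[
\left\| \sum_{k,i} \left( \frac{|\lambda_{k,i}|}{\| \chi_{Q_{k,i}} \|_{L^{p(\cdot)}_{\omega}}} \right)^{\theta} \chi_{Q_{k,i}} \right\|_{L^{p(\cdot)/\theta}_{\omega^{\theta}}}^{1/\theta} \lesssim \big\| (\mathcal{M}f)^{\theta} \big\|_{L^{p(\cdot)/\theta}_{\omega^{\theta}}}^{1/\theta} = \| \mathcal{M}f \|_{L^{p(\cdot)}_{\omega}} = \| f \|_{H^{p(\cdot)}_{\omega}},
\]
which is (\ref{Hpw norm}) after relabeling the pair $(k,i)$ as a single index $j$.

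It remains to promote the convergence of $\sum_{j} \lambda_{j} a_{j}$ to convergence in $L^{s}$ under the extra hypothesis $f \in L^{s}$; this is \cite[Theorem 3.1]{Rocha2}, and it is the only genuinely delicate point. Writing $g_{k}$ for the ``good part'' at level $k$, so that $f - g_{k} = \sum_{i} b_{k,i}$ with $\supp(b_{k,i}) \subset Q_{k,i}$ and each $b_{k,i}$ carrying the prescribed vanishing moments, one shows $\| f - g_{k} \|_{L^{s}} \to 0$ as $k \to +\infty$ (using $g_{k} = f$ on $\Omega_{k}^{c}$, $|\Omega_{k}| \to 0$, the size bound $\sum_{i} \| b_{k,i} \|_{L^{s}}^{s} \lesssim \int_{\Omega_{k}} (\mathcal{M}f)^{s}$, and $\mathcal{M}f \in L^{s}$ because $f \in L^{s}$ and $s > 1$), while $\| g_{k} \|_{L^{s}} \to 0$ as $k \to -\infty$ (from $|f| \leq \mathcal{M}f \leq 2^{k}$ on $\Omega_{k}^{c}$ together with a dominated-convergence argument, plus the tail bound $2^{ks}|\Omega_{k}| \to 0$ for the part of $g_k$ lying over $\Omega_k$, all using $(\mathcal{M}f)^{s} \in L^{1}$). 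Feeding these into the telescoping identity $f = \sum_{k}(g_{k+1} - g_{k})$ and identifying $g_{k+1} - g_{k}$ with $\sum_{i} \lambda_{k,i} a_{k,i}$ after absorbing the moment-correcting terms yields convergence of the partial sums of $\sum_{j} \lambda_{j} a_{j}$ to $f$ in $L^{s}$.

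The main obstacle is precisely this last step: the $L^{s}$-decay of $g_{k}$ as $k \to -\infty$ and the uniform $L^{s}$-control of the moment corrections are not formal consequences of the $H^{p(\cdot)}_{\omega}$ theory and genuinely require $f \in L^{s}$; on the other hand the weight $\omega$ and the variable exponent $p(\cdot)$ play no role in this part, entering only through the atom normalization and the norm estimate (\ref{Hpw norm}), so the unweighted constant-exponent $L^{s}$-convergence mechanism of \cite{Rocha2} transfers to the present setting unchanged.
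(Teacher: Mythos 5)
Your proposal is correct and follows exactly the route the paper itself takes: the paper's proof of Theorem \ref{w atomic decomp} is essentially a two-line citation of \cite[Theorem 6.2]{Ho1} for the existence of the decomposition and the estimate (\ref{Hpw norm}), together with \cite[Theorem 3.1]{Rocha2} for the $L^{s}$-convergence. You simply unpack the content of those two references (the Calder\'on--Zygmund construction on the level sets of $\mathcal{M}f$, the pointwise bound by $(\mathcal{M}f)^{\theta}$, and the telescoping $L^{s}$-limit argument), which is consistent with, and more detailed than, what the paper records.
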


\begin{proof}
The existence of a such atomic decomposition as well as the validity of inequality in (\ref{Hpw norm}) are guaranteed by 
\cite[Theorem 6.2]{Ho1}. The construction of a such atomic decomposition is the same that the given for the classical Hardy spaces 
(see \cite[Chapter III]{Stein}). So, following the proof in \cite[Theorem 3.1]{Rocha2}, we obtain the convergence in 
$L^{s}(\mathbb{R}^{n})$.
\end{proof}

We define the set $\mathcal{S}_{0}(\mathbb{R}^{n})$ by
\[
\mathcal{S}_{0}(\mathbb{R}^{n}) =\left\{ \varphi \in \mathcal{S}(\mathbb{R}^{n}) : \int x^{\beta} \varphi(x) dx = 0, \,\, \text{for all} \, 
\beta \in \mathbb{N}_{0}^{n} \right\}.
\]

We say that an exponent function $p(\cdot) : \mathbb{R}^{n} \to (0, \infty)$ such that $0 < p_{-} \leq p_{+} < \infty$ belongs 
to $\mathcal{P}^{log}(\mathbb{R}^{n})$, if there exist two positive constants $C$ and $C_{\infty}$ such that $p(\cdot)$ satisfies the 
local log-H\"older continuity condition, i.e.:
\[
|p(x) - p(y)| \leq \frac{C}{-\log(|x-y|)}, \,\,\, |x-y| \leq \frac{1}{2},
\]
and is log-H\"older continuous at infinity, i.e.:
\[
|p(x) - p_{\infty}| \leq \frac{C_{\infty}}{\log(e+|x|)}, \,\,\, x \in \mathbb{R}^{n},
\]
for some $p_{-} \leq p_{\infty} \leq p_{+}$. 

\begin{theorem} (\cite[Proposition 2.1]{Ho2}) \label{dense}
Let $p(\cdot) \in \mathcal{P}^{\log}(\mathbb{R}^{n})$ with $0 < p_{-} \leq p_{+} < \infty$. If $\omega \in \mathcal{W}_{p(\cdot)}$, then
$\mathcal{S}_{0}(\mathbb{R}^{n}) \subset H^{p(\cdot)}_{\omega}(\mathbb{R}^{n})$ densely.
\end{theorem}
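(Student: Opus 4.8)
The plan is to prove the two halves of the statement separately: first that every $\varphi\in\mathcal{S}_{0}(\mathbb{R}^{n})$ lies in $H^{p(\cdot)}_{\omega}(\mathbb{R}^{n})$, and then that $\mathcal{S}_{0}(\mathbb{R}^{n})$ is dense there. For the inclusion, fix $\varphi\in\mathcal{S}_{0}(\mathbb{R}^{n})$. The key observation is that, since $\varphi$ is Schwartz and all its moments vanish, its grand maximal function decays fast at infinity: expanding a test function $\psi\in\mathcal{F}_{N}$ in a Taylor polynomial and invoking $\int x^{\beta}\varphi\,dx=0$ for all $\beta$ yields $\mathcal{M}\varphi(x)\lesssim(1+|x|)^{-(n+N)}$, with $N$ as large as one wishes (the grand maximal function may be defined with any large order $N$). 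On the other hand, $\omega\in\mathcal{W}_{p(\cdot)}$ gives $\|\chi_{Q(0,r)}\|_{L^{p(\cdot)}_{\omega}}<\infty$ for every $r$ (by Definition \ref{pesos Wp}$(i)$ and Lemma \ref{potencia s}) and, crucially, at most polynomial growth of $r\mapsto\|\chi_{Q(0,r)}\|_{L^{p(\cdot)}_{\omega}}$ (a doubling-type estimate for $\|\chi_{Q}\|_{L^{p(\cdot)}_{\omega}}$ of the kind obtained in Section 3). Decomposing $\mathbb{R}^{n}$ into dyadic annuli, dominating $\mathcal{M}\varphi$ by $\sum_{k\ge 0}c_{k}\chi_{Q(0,2^{k+1})}$ with $c_{k}$ decaying fast, and using the $r$-subadditivity of $\|\cdot\|_{L^{p(\cdot)}_{\omega}}$ with $r=\min\{1,p_{-}\}$ (again via Lemma \ref{potencia s}), one obtains $\|\mathcal{M}\varphi\|_{L^{p(\cdot)}_{\omega}}<\infty$ as soon as $N$ is taken large enough, i.e. $\varphi\in H^{p(\cdot)}_{\omega}$.

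For the density I would reduce to a single atom. By Ho's atomic decomposition (\cite[Theorem 6.2]{Ho1}; see Theorem \ref{w atomic decomp} and (\ref{Hpw norm})) together with the fact that the $H^{p(\cdot)}_{\omega}$-quasi-norm of an atomic series is dominated by the associated sequence quasi-norm — so that the tails $\sum_{j>N}\lambda_{j}a_{j}$ are small in $H^{p(\cdot)}_{\omega}$ — finite linear combinations of $\omega$-$(p(\cdot),p_{0},d)$ atoms, $d=\lfloor n\,s_{\omega,\,p(\cdot)}-n\rfloor$, are dense in $H^{p(\cdot)}_{\omega}$. It therefore suffices to approximate, in $H^{p(\cdot)}_{\omega}$, a single atom $a$ supported in a cube $Q$ by elements of $\mathcal{S}_{0}$. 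Note that mollification is useless here, since a compactly supported function all of whose moments vanish is identically zero (its Fourier transform is entire and vanishes to infinite order at the origin); so one must allow non-compactly-supported approximants. I would use a Calderón reproducing formula: fix a radial $\psi\in\mathcal{S}_{0}(\mathbb{R}^{n})$ with $\widehat{\psi}$ supported in $\{1/2\le|\xi|\le 2\}$ and normalized so that $\int_{0}^{\infty}\widehat{\psi}(t\xi)\,\frac{dt}{t}=1$ for $\xi\ne 0$; since $\widehat{a}(0)=\int a=0$ one has $a=\int_{0}^{\infty}\psi_{t}\ast a\,\frac{dt}{t}$, and the truncations $a_{\varepsilon,R}:=\int_{\varepsilon}^{R}\psi_{t}\ast a\,\frac{dt}{t}$ all belong to $\mathcal{S}_{0}(\mathbb{R}^{n})$ (each $\psi_{t}\ast a$ is Schwartz with Fourier transform vanishing near the origin, and all moments of $a_{\varepsilon,R}$ vanish by Fubini).

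The remaining — and main — obstacle is to show $a_{\varepsilon,R}\to a$ in $H^{p(\cdot)}_{\omega}$ as $\varepsilon\to 0$ and $R\to\infty$, i.e. that $\mathcal{M}\big(\int_{0}^{\varepsilon}\psi_{t}\ast a\,\frac{dt}{t}\big)$ and $\mathcal{M}\big(\int_{R}^{\infty}\psi_{t}\ast a\,\frac{dt}{t}\big)$ tend to $0$ in $L^{p(\cdot)}_{\omega}$. For the high-frequency tail ($t<\varepsilon$) I would exploit the $d$ vanishing moments of $a$ and the smoothness of $\psi$ to gain a factor $t^{d+1}$ (integrable near $t=0$), and for the low-frequency tail ($t>R$) the support and $L^{p_{0}}$-normalization of $a$ together with the rapid decay of $\psi$ to gain a negative power of $t$ (integrable at $\infty$); in both regimes the pieces are, up to constants tending to $0$ with $\varepsilon$ respectively $1/R$, controlled by profiles supported on fixed dilates of $Q$, so the cube-size estimates of the first part and the quasi-norm bounds above force the relevant $H^{p(\cdot)}_{\omega}$-quasi-norms to vanish in the limit. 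Assembling the two tail estimates gives $a_{\varepsilon,R}\to a$ in $H^{p(\cdot)}_{\omega}$ and hence the density of $\mathcal{S}_{0}(\mathbb{R}^{n})$. The genuinely delicate step is precisely this conversion of the $t$-integrability of the Littlewood--Paley pieces into convergence in the variable weighted quasi-norm, which forces one to apply the maximal-function and cube-size estimates uniformly.
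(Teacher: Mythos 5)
The paper does not actually prove this statement: it is imported wholesale from Ho (\cite[Proposition 2.1]{Ho2}), so there is no internal proof to compare yours against, and I can only judge your argument on its own terms. The first half (the inclusion $\mathcal{S}_{0}\subset H^{p(\cdot)}_{\omega}$) is essentially correct and complete: the decay $\mathcal{M}\varphi(x)\lesssim(1+|x|)^{-L}$ with $L$ large for $\varphi\in\mathcal{S}_{0}$ is standard, iterating Lemma \ref{2Q} gives $\|\chi_{Q(0,2^{k})}\|_{L^{p(\cdot)}_{\omega}}\lesssim C^{k}=(2^{k})^{\log_{2}C}$ (polynomial growth, as you claim), and the $\min\{1,p_{-}\}$-subadditivity via Lemma \ref{potencia s} closes the dyadic sum. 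You do lean tacitly on the independence of $H^{p(\cdot)}_{\omega}$ from the order $N$ of the grand maximal function, but the paper itself asserts that equivalence, so that is acceptable.

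The density half has genuine gaps. First, Theorem \ref{w atomic decomp} only decomposes $f\in H^{p(\cdot)}_{\omega}\cap L^{s}$; to get density of finite atomic sums in all of $H^{p(\cdot)}_{\omega}$ you must first prove $H^{p(\cdot)}_{\omega}\cap L^{s}$ is dense, and you also need the reconstruction inequality $\bigl\|\sum_{j}\lambda_{j}a_{j}\bigr\|_{H^{p(\cdot)}_{\omega}}\lesssim\bigl\|\sum_{j}(|\lambda_{j}|/\|\chi_{Q_{j}}\|_{L^{p(\cdot)}_{\omega}})^{\theta}\chi_{Q_{j}}\bigr\|^{1/\theta}_{L^{p(\cdot)/\theta}_{\omega^{\theta}}}$ to make the tails small; neither is addressed. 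Second, and more seriously, your assertion that both truncation errors are ``controlled by profiles supported on fixed dilates of $Q$'' is false for the low-frequency tail $g_{R}=\int_{R}^{\infty}\psi_{t}\ast a\,\frac{dt}{t}$: the piece $\psi_{t}\ast a$ spreads over scale $t\to\infty$, and the best one gets is $\mathcal{M}g_{R}(x)\lesssim C(a)\,(R+|x-x_{Q}|)^{-n-d-1}$ globally. Sending $R\to\infty$ in $L^{p(\cdot)}_{\omega}$ thus requires $(1+|x-x_{Q}|)^{-n-d-1}\in L^{p(\cdot)}_{\omega}$, i.e.\ $n+d+1$ must exceed the growth exponent of $r\mapsto\|\chi_{Q(0,r)}\|_{L^{p(\cdot)}_{\omega}}$, which by Lemma \ref{Ap(.) cond} is of order $ns$ for $s>s_{\omega,\,p(\cdot)}$. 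This is precisely why the atoms carry $d=\lfloor ns_{\omega,\,p(\cdot)}-n\rfloor$ vanishing moments (so that $n+d+1>ns_{\omega,\,p(\cdot)}$), and verifying that inequality is the crux of the limit $R\to\infty$; you never make this connection, so the key step is unjustified as written. The high-frequency tail also needs more than you say: $a$ is merely in $L^{p_{0}}$, so the gain as $\varepsilon\to0$ must come from the Littlewood--Paley convergence $\|\int_{0}^{\varepsilon}\psi_{t}\ast a\,\frac{dt}{t}\|_{L^{p_{0}}}\to0$ combined with a molecular-type estimate (rapid decay off $2Q$ uniformly in $\varepsilon$ plus vanishing moments), not from smoothness of $a$.
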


\section{Auxiliary Results}

In this section we prove some basics properties of the set $\mathcal{W}_{p(\cdot)}$ and some results about the size of the cubes in the
$L^{p(\cdot)}_{\omega}$-norm.

\begin{proposition} \label{WqcWp}
Let $0 < \alpha < n$ and let $q(\cdot) : \mathbb{R}^{n} \to (0, \infty)$ be a measurable function such that $0 < q_{-} \leq q_{+} < \infty$. If $\omega \in \mathcal{W}_{q(\cdot)}$ and $\frac{1}{p(\cdot)} := \frac{1}{q(\cdot)} + \frac{\alpha}{n}$, then 
$\omega \in \mathcal{W}_{p(\cdot)}$. Moreover, $s_{\omega, \, p(\cdot)} \leq s_{\omega, \, q(\cdot)} + \frac{\alpha}{n}$.
\end{proposition}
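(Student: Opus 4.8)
The plan is to verify the two requirements in Definition \ref{pesos Wp} for $p(\cdot)$ and then to establish the index bound. Observe first that $\frac{1}{p(\cdot)} = \frac{1}{q(\cdot)} + \frac{\alpha}{n}$ forces $p(x) < q(x)$ a.e., $p_{-} \le q_{-}$, $p_{+} \le q_{+}$ and $0 < p_{-} \le p_{+} < \infty$; throughout, $q_{\ast} \in (0, \min\{1, q_{-}\})$ denotes the exponent furnished by condition $(i)$ of $\omega \in \mathcal{W}_{q(\cdot)}$.

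For condition $(i)$, Lemma \ref{potencia s} gives $\| \chi_Q \|_{L^{p(\cdot)/p_{\ast}}_{\omega^{p_{\ast}}}} = \| \chi_Q \|_{L^{p(\cdot)}_{\omega}}^{p_{\ast}}$ and $\| \chi_Q \|_{L^{q(\cdot)/q_{\ast}}_{\omega^{q_{\ast}}}} = \| \chi_Q \|_{L^{q(\cdot)}_{\omega}}^{q_{\ast}} < \infty$, so the first estimate reduces to $\| \chi_Q \|_{L^{p(\cdot)}_{\omega}} < \infty$; writing $\frac{1}{p(x)} = \frac{1}{q(x)} + \frac{1}{n/\alpha}$ and using the generalized H\"older inequality for variable Lebesgue spaces, $\| \chi_Q \omega \|_{L^{p(\cdot)}} \lesssim \| \chi_Q \omega \|_{L^{q(\cdot)}} \| \chi_Q \|_{L^{n/\alpha}} = |Q|^{\alpha/n} \| \chi_Q \|_{L^{q(\cdot)}_{\omega}} < \infty$. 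For the second estimate, since $p_{\ast} < p_{-}$ the exponent $(p(\cdot)/p_{\ast})'$ is bounded, so $\chi_Q \omega^{-p_{\ast}} \in L^{(p(\cdot)/p_{\ast})'}$ is equivalent to finiteness of $\int_Q \omega^{-p_{\ast}(p(x)/p_{\ast})'}\, dx$; as $p_{\ast}(p(x)/p_{\ast})' = \frac{p_{\ast} p(x)}{p(x) - p_{\ast}}$ is decreasing in $p(x)$, it is at most $\frac{p_{\ast}p_{-}}{p_{-}-p_{\ast}}$, so it suffices to prove $\int_Q \omega^{-\beta}\, dx < \infty$ with $\beta := \frac{p_{\ast}p_{-}}{p_{-}-p_{\ast}}$. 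The same reasoning applied to $\omega \in \mathcal{W}_{q(\cdot)}$ yields $\int_Q \omega^{-q_{\ast}(q(x)/q_{\ast})'}\, dx < \infty$, and since $q_{\ast}(q(x)/q_{\ast})' \ge \frac{q_{\ast}q_{+}}{q_{+}-q_{\ast}} > 0$, splitting $Q$ according to whether $\omega \ge 1$ or $\omega < 1$ gives $\int_Q \omega^{-\gamma}\, dx < \infty$ for all $0 < \gamma \le \frac{q_{\ast}q_{+}}{q_{+}-q_{\ast}}$. Hence it remains only to fix $p_{\ast} \in (0, \min\{1, p_{-}\})$ so small that $\frac{p_{\ast}p_{-}}{p_{-}-p_{\ast}} \le \frac{q_{\ast}q_{+}}{q_{+}-q_{\ast}}$, which is possible since the left side tends to $0$ as $p_{\ast} \to 0^{+}$.

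For condition $(ii)$, let $s_{0} > 1$ and $\kappa_{0} > 1$ witness $(ii)$ for $\omega \in \mathcal{W}_{q(\cdot)}$, so that $M$ is bounded on $L^{(s_{0}q(\cdot))'/\kappa_{0}}_{\omega^{-\kappa_{0}/s_{0}}}$. I would set $s := s_{0} + \frac{\alpha}{n} > 1$ and $\kappa := \frac{s}{s_{0}}\kappa_{0} > \kappa_{0} > 1$. A direct computation from $\frac{1}{p(x)} = \frac{1}{q(x)} + \frac{\alpha}{n}$ shows that $(sp(x))' = \frac{s}{s_{0}}(s_{0}q(x))'$ for a.e.\ $x$, hence $(sp(\cdot))'/\kappa = (s_{0}q(\cdot))'/\kappa_{0}$; moreover $\kappa/s = \kappa_{0}/s_{0}$, so $\omega^{-\kappa/s} = \omega^{-\kappa_{0}/s_{0}}$. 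Thus $L^{(sp(\cdot))'/\kappa}_{\omega^{-\kappa/s}}$ and $L^{(s_{0}q(\cdot))'/\kappa_{0}}_{\omega^{-\kappa_{0}/s_{0}}}$ are the same space, and $M$ is bounded on it; together with $(i)$ this gives $\omega \in \mathcal{W}_{p(\cdot)}$.

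Finally, for $s_{\omega, p(\cdot)} \le s_{\omega, q(\cdot)} + \frac{\alpha}{n}$, fix any $t \ge 1$ with $M$ bounded on $L^{(tq(\cdot))'}_{\omega^{-1/t}}$ and put $\sigma := t + \frac{\alpha}{n}$, $\mu := t/\sigma \in (0,1)$. The same identity gives $(\sigma p(x))' = \mu^{-1}(tq(x))'$ a.e., so by Lemma \ref{potencia s} (and $\mu\sigma = t$) one has, for $h \ge 0$, $\| h\, \omega^{-1/\sigma} \|_{L^{(\sigma p(\cdot))'}} = \| h^{1/\mu}\, \omega^{-1/t} \|_{L^{(tq(\cdot))'}}^{\mu}$. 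Since $1/\mu > 1$, Jensen's inequality gives $(Mg)^{1/\mu} \le M(g^{1/\mu})$ pointwise; applying the displayed identity with $h = Mg$ and with $h = g$, and using the boundedness of $M$ on $L^{(tq(\cdot))'}_{\omega^{-1/t}}$ in between, one obtains $\| (Mg)\, \omega^{-1/\sigma} \|_{L^{(\sigma p(\cdot))'}} \lesssim \| g\, \omega^{-1/\sigma} \|_{L^{(\sigma p(\cdot))'}}$, i.e.\ $M$ is bounded on $L^{(\sigma p(\cdot))'}_{\omega^{-1/\sigma}}$ and so $s_{\omega, p(\cdot)} \le t + \frac{\alpha}{n}$; taking the infimum over such $t$ gives the claim (the infimum set is nonempty, e.g.\ $s_{0}$ belongs to it by the analogous rescaling). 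The step I expect to be the main obstacle is this last one: the pointwise identity $(\sigma p(x))' = \frac{\sigma}{t}(tq(x))'$ forced by $\frac{1}{p(\cdot)} - \frac{1}{q(\cdot)} = \frac{\alpha}{n}$, combined with the rescaling from Lemma \ref{potencia s} and Jensen's inequality, is what makes the maximal inequality transfer from $q(\cdot)$ to $p(\cdot)$; the bounded-exponent reduction in $(i)$ is the other point that needs some care.
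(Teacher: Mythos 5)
Your proof is correct, and its engine is the same as the paper's: the observation that $\tfrac{1}{p(\cdot)}-\tfrac{1}{q(\cdot)}=\tfrac{\alpha}{n}$ forces the exact conjugate--rescaling identity $(sp(\cdot))'=\tfrac{s}{s_0}(s_0q(\cdot))'$ for $s=s_0+\tfrac{\alpha}{n}$, so that the relevant weighted spaces for $p(\cdot)$ and $q(\cdot)$ literally coincide. This is precisely the paper's computation $\tfrac{p_0}{q_0}\bigl(\tfrac{r}{p_0}p(\cdot)\bigr)'=(sq(\cdot))'$ with $\tilde{s}=r/p_0=s+\tfrac{\alpha}{n}$, and your Jensen-plus-rescaling argument for the index bound is the same standard transfer the paper invokes implicitly when it passes from $\kappa>1$ to $\kappa=1$. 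Where you genuinely diverge is in condition $(i)$: the paper chooses $p_{\ast}$ by $\tfrac{1}{p_{\ast}}:=\tfrac{1}{q_{\ast}}+\tfrac{\alpha}{n}$, which makes the dual-norm finiteness an exact identity, $\| \chi_Q \|_{L^{(p(\cdot)/p_{\ast})'}_{\omega^{-p_{\ast}}}} = \| \chi_Q \|^{p_{\ast}/q_{\ast}}_{L^{(q(\cdot)/q_{\ast})'}_{\omega^{-q_{\ast}}}}$, via Lemma \ref{potencia s}; you instead take $p_{\ast}$ small and compare modulars, using that the conjugated exponents are bounded and that $\gamma\mapsto\omega^{-\gamma}$ is monotone on $\{\omega<1\}$. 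Both are valid; the paper's choice buys a one-line proof, while yours is more elementary and shows the conclusion is insensitive to the exact value of $p_{\ast}$ (which is the point of the Remark after Definition \ref{pesos Wp}). Two small things to tighten: the generalized H\"older step $\| \chi_Q\omega \|_{L^{p(\cdot)}}\lesssim\| \chi_Q\omega \|_{L^{q(\cdot)}}\| \chi_Q \|_{L^{n/\alpha}}$ should be run after rescaling by $p_{\ast}$ (or replaced by the embedding \cite[Corollary 2.48]{Uribe2} the paper uses), since the standard H\"older statement assumes exponents at least $1$ and here $p_{-}$ may be below $1$; and you should record that $sp(x)>1$ exactly when $s_0q(x)>1$ under your choice $s=s_0+\tfrac{\alpha}{n}$, so the conjugate exponents are all legitimate. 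Neither affects the validity of the argument.
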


\begin{proof}
By Definition \ref{pesos Wp}, $\omega \in \mathcal{W}_{q(\cdot)}$ if and only if
\[
(i)  \,\, \text{there exists} \,\, 0 < q_{\ast} < \min \{1, q_{-} \} \,\, \text{such that} \,\, 
\| \chi_Q \|_{L^{q(\cdot)/q_{\ast}}_{\omega^{q_{\ast}}}} < \infty, \,\,\, \text{and} 
\]
\[ 
\| \chi_Q \|_{L^{(q(\cdot)/q_{\ast})'}_{\omega^{-q_{\ast}}}} < \infty, \,\,\, \text{for all cube} \,\, Q;
\]
\[
(ii) \,\, \text{there exist $\kappa > 1$ and $s > 1$ such that Hardy-Littlewood maximal}
\]
\[ 
\text{operator $M$ is bounded on $L^{(sq(\cdot))'/\kappa}_{\omega^{-\kappa/s}}$}. 
\]
We define $\frac{1}{p_{\ast}} := \frac{1}{q_{\ast}} + \frac{\alpha}{n}$. Since 
$\frac{1}{p(\cdot)} - \frac{1}{q(\cdot)} = \frac{1}{p_{\ast}} - \frac{1}{q_{\ast}}$, it follows that $0 < p_{\ast} < \min \{1, p_{-} \}$ and
$\frac{p_{\ast}}{q_{\ast}} \left( \frac{p(\cdot)}{p_{\ast}} \right)' = \left( \frac{q(\cdot)}{q_{\ast}} \right)'$. So, from 
Lemma \ref{potencia s} and (i) above, we have
\[
\| \chi_Q \|_{L^{\left( \frac{p(\cdot)}{p_{\ast}} \right)'}_{\omega^{-p_{\ast}}}} = 
\| \chi_Q \|_{L^{\frac{p_{\ast}}{q_{\ast}} \left( \frac{p(\cdot)}{p_{\ast}} \right)'}_{\omega^{-q_{\ast}}}}^{\frac{p_{\ast}}{q_{\ast}}} =
\| \chi_Q \|_{L^{\left( \frac{q(\cdot)}{q_{\ast}} \right)'}_{\omega^{-q_{\ast}}}}^{\frac{p_{\ast}}{q_{\ast}}} < \infty.
\]
Being $p(\cdot) \leq q(\cdot)$, by \cite[Corollary 2.48]{Uribe2}, Lemma \ref{potencia s} and (i), we have
\[
\| \chi_{Q} \|_{L^{p(\cdot)/p_{\ast}}_{\omega^{p_{\ast}}}}^{q_{\ast}/p_{\ast}} = 
\| \chi_Q \, \omega^{p_{\ast}} \|_{L^{p(\cdot)/p_{\ast}}}^{q_{\ast}/p_{\ast}} \leq (1+|Q|)^{q_{\ast}/p_{\ast}}
\| \chi_Q \, \omega^{p_{\ast}} \|_{L^{q(\cdot)/p_{\ast}}}^{q_{\ast}/p_{\ast}}
\]
\[
= (1+|Q|)^{q_{\ast}/p_{\ast}} \| \chi_{Q} \|_{L^{q(\cdot)/q_{\ast}}_{\omega^{q_{\ast}}}} < \infty.
\]

From $(ii)$ follows that the maximal operator $M$ is bounded on $L^{(sq(\cdot))'}_{\omega^{-1/s}}$, with $s > 1$.
We fix $r > s + \frac{\alpha}{n}$, and define $q_0 := \frac{r}{s}$ and $\frac{1}{p_0} := \frac{1}{q_0} + \frac{\alpha}{nr}$. It is clear that
$\frac{r}{p_0} > 1$, $\frac{q_0}{p_0} > 1$ and 
$\frac{p_0}{q_0} \left( \frac{r}{p_0} p(\cdot) \right)' = \left( \frac{r}{q_0} q(\cdot) \right)'  = (s q(\cdot))'$. Thus, for 
$\tilde{s} := \frac{r}{p_0}$ and $\tilde{\kappa} := \frac{q_0}{p_0}$, we obtain 
\[
\| Mf \|_{L^{(\tilde{s}p(\cdot))'/\tilde{\kappa}}_{\omega^{-\tilde{\kappa}/\tilde{s}}}} = \| Mf \|_{L^{(sq(\cdot))'}_{\omega^{-1/s}}}
\leq C \| f \|_{L^{(sq(\cdot))'}_{\omega^{-1/s}}} = C \| f \|_{L^{(\tilde{s}p(\cdot))'/\tilde{\kappa}}_{\omega^{-\tilde{\kappa}/\tilde{s}}}}.
\]
So, $\omega \in \mathcal{W}_{p(\cdot)}$ and $s_{\omega, \, p(\cdot)} \leq s_{\omega, \, q(\cdot)} + \frac{\alpha}{n}$.
\end{proof}

The following necessary condition is due to Cruz-Uribe, Fiorenza and Neugebauer (see \cite[Theorem 1.5]{Uribe1}). It should be compared to the Muckenhoupt $\mathcal{A}_p$ condition from the study of weighted norm inequalities (see \cite[Chapter 7]{Grafakos}).

\begin{lemma} \label{Ap(.) cond}
Given a weight $\omega$ and an exponent function $p(\cdot) : \mathbb{R}^{n} \to (1, \infty)$ such that the Hardy-Littlewood maximal operator is bounded on $L^{p(\cdot)}_{w}(\mathbb{R}^{n})$, then, there exists a positive constant $C$ such that for every cube $Q \subset \mathbb{R}^{n}$,
\begin{equation} \label{Ap var condition}
\| \chi_{Q} \|_{L^{p(\cdot)}_\omega} \| \chi_{Q} \|_{L^{p'(\cdot)}_{\omega^{-1}}} \leq C |Q|.
\end{equation}
\end{lemma}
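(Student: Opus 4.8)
The plan is to run the variable-exponent version of the classical argument showing that the boundedness of $M$ forces an $\mathcal{A}_p$-type condition; the only genuinely non-classical ingredient is the associate-space description of the $L^{p(\cdot)}_\omega$-norm furnished by Proposition \ref{norma equivalente}. First I would test $M$ on cube averages: fix a cube $Q$ and a function $g$ with $\supp g \subset Q$. For every $x \in Q$ the cube $Q$ itself is admissible in the definition of $Mg$, so $Mg(x) \ge |Q|^{-1}\int_Q |g(y)|\,dy$; equivalently $\big(|Q|^{-1}\int_Q |g|\big)\,\chi_Q \le Mg$ pointwise. Applying $\|\cdot\|_{L^{p(\cdot)}_\omega}$, using that this norm is monotone with respect to the pointwise order and that $M$ is bounded on $L^{p(\cdot)}_\omega$, say with operator norm $A$, one obtains
\[
\Big(\frac{1}{|Q|}\int_Q |g|\Big)\,\|\chi_Q\|_{L^{p(\cdot)}_\omega} \;\le\; \|Mg\|_{L^{p(\cdot)}_\omega} \;\le\; A\,\|g\|_{L^{p(\cdot)}_\omega}
\]
for every $g$ supported on $Q$.

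Next I would dualize. In particular, whenever $\|g\|_{L^{p(\cdot)}_\omega}\le 1$ and $\supp g\subset Q$, the above gives $\int_Q |g|\le A|Q|\,\|\chi_Q\|_{L^{p(\cdot)}_\omega}^{-1}$; and an arbitrary $g$ with $\|g\|_{L^{p(\cdot)}_\omega}\le 1$ may be replaced by $g\chi_Q$, which does not increase the norm and leaves $\int_Q |g|$ unchanged, so the bound holds for all $g$ with $\|g\|_{L^{p(\cdot)}_\omega}\le 1$. Now apply Proposition \ref{norma equivalente} to the exponent $p'(\cdot)$ and the weight $\omega^{-1}$ — whose conjugate data are precisely $p(\cdot)$ and $\omega$, since $(p'(\cdot))'=p(\cdot)$ as $p(\cdot)>1$ — to rewrite $\|\chi_Q\|_{L^{p'(\cdot)}_{\omega^{-1}}}$ as the supremum of $\int_Q |g|$ over such $g$. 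Taking this supremum in the previous inequality yields
\[
\|\chi_Q\|_{L^{p'(\cdot)}_{\omega^{-1}}} \;\approx\; \sup\Big\{\int_Q |g| : \|g\|_{L^{p(\cdot)}_\omega}\le 1\Big\} \;\le\; \frac{A\,|Q|}{\|\chi_Q\|_{L^{p(\cdot)}_\omega}},
\]
which rearranges to (\ref{Ap var condition}) with a constant uniform in $Q$.

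I do not anticipate a substantial obstacle; the argument is the one of Cruz-Uribe, Fiorenza and Neugebauer, \cite[Theorem 1.5]{Uribe1}. The two bookkeeping points are: (a) the finiteness of $\|\chi_Q\|_{L^{p(\cdot)}_\omega}$ and of $\|\chi_Q\|_{L^{p'(\cdot)}_{\omega^{-1}}}$, which is itself a consequence of the boundedness of $M$ — testing the displayed inequality of the first step against $g=\chi_E$, where $E\subset Q$ has positive measure and $\omega$ is bounded on $E$, forces $\|\chi_Q\|_{L^{p(\cdot)}_\omega}<\infty$, and a similar test with $g=\chi_Q\,\omega^{-1}$ (for which $Mg\equiv+\infty$ on $Q$ unless $\omega^{-1}$ is locally integrable) forces $\omega^{-1}\in L^1_{\mathrm{loc}}$ and then $\|\chi_Q\|_{L^{p'(\cdot)}_{\omega^{-1}}}<\infty$; and (b) checking that Proposition \ref{norma equivalente} is legitimately applied with $\omega^{-1}$ in place of $\omega$, which is immediate once $\omega^{-1}$ is known to be a weight in the required sense. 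Everything else reduces to the elementary pointwise estimate of the first step.
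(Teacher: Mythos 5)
Your proof is correct, and it is precisely the standard necessity argument of Cruz-Uribe, Fiorenza and Neugebauer: test $M$ on functions supported in $Q$ to get $\bigl(|Q|^{-1}\int_Q|g|\bigr)\|\chi_Q\|_{L^{p(\cdot)}_\omega}\le A\|g\|_{L^{p(\cdot)}_\omega}$, then recover $\|\chi_Q\|_{L^{p'(\cdot)}_{\omega^{-1}}}$ by duality via Proposition \ref{norma equivalente}. The paper itself offers no proof of this lemma --- it simply cites \cite[Theorem 1.5]{Uribe1} --- so your argument supplies exactly the reasoning behind that citation, with the bookkeeping about finiteness of the two norms and local integrability of $\omega^{-1}$ handled correctly.
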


\begin{definition} \label{def Ap var} Given an exponent function $p(\cdot) : \mathbb{R}^{n} \to (1, \infty)$ and a weight $\omega$,
we write $\omega \in \mathcal{A}_{p(\cdot)}$ if $\omega$ satisfies (\ref{Ap var condition}).
\end{definition}

\begin{lemma} \label{2Q}
Let $p(\cdot) : \mathbb{R}^{n} \to (0, \infty)$ be a measurable function with $0 < p_{-} \leq p_{+} < \infty$. If 
$\omega \in \mathcal{W}_{p(\cdot)}$, then, for every cube $Q \subset \mathbb{R}^{n}$,
\[
\| \chi_{2Q} \|_{L^{p(\cdot)}_\omega}  \approx \| \chi_{Q} \|_{L^{p(\cdot)}_\omega} .
\]
\end{lemma}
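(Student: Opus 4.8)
The inequality $\|\chi_Q\|_{L^{p(\cdot)}_\omega}\le\|\chi_{2Q}\|_{L^{p(\cdot)}_\omega}$ is immediate from $\chi_Q\le\chi_{2Q}$, and all the cube norms appearing below are finite: by Lemma \ref{potencia s} and item $(i)$ of Definition \ref{pesos Wp}, $\|\chi_R\|_{L^{p(\cdot)}_\omega}^{\,p_{\ast}}=\|\chi_R\|_{L^{p(\cdot)/p_{\ast}}_{\omega^{p_{\ast}}}}<\infty$ for every cube $R$, in particular for $R=2Q$. So the content is the reverse bound $\|\chi_{2Q}\|_{L^{p(\cdot)}_\omega}\lesssim\|\chi_Q\|_{L^{p(\cdot)}_\omega}$. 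The first step is to replace $p(\cdot)$ by an exponent that is $\ge1$, so that duality (Proposition \ref{norma equivalente}) and H\"older's inequality (Lemma \ref{Holder ineq}) become available. I would fix $s>1$ and $\kappa>1$ as in item $(ii)$ of Definition \ref{pesos Wp}, so that $M$ is bounded on $L^{(sp(\cdot))'/\kappa}_{\omega^{-\kappa/s}}$ (note that $1\le sp(\cdot)$, since otherwise $(sp(\cdot))'$ would be undefined, and $sp(\cdot)\le sp_{+}<\infty$), and then apply Lemma \ref{potencia s} with exponent $1/s$ to write, for every cube $R$,
\[
\|\chi_R\|_{L^{p(\cdot)}_\omega}=\|\chi_R\|_{L^{sp(\cdot)}_{\omega^{1/s}}}^{\,s}.
\]
Hence it suffices to prove $\|\chi_{2Q}\|_{L^{sp(\cdot)}_{\omega^{1/s}}}\lesssim\|\chi_Q\|_{L^{sp(\cdot)}_{\omega^{1/s}}}$.

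For this I would invoke Proposition \ref{norma equivalente}:
\[
\|\chi_{2Q}\|_{L^{sp(\cdot)}_{\omega^{1/s}}}\approx\sup\Big\{\int_{2Q}g(x)\,dx:\ g\ge0,\ \|g\|_{L^{(sp(\cdot))'}_{\omega^{-1/s}}}\le1\Big\},
\]
fix such a $g$, and estimate $\int_{2Q}g$. The geometric point is that for every $y\in Q$ the cube $2Q$ is contained in the cube centred at $y$ with side length $3\,l(Q)$; together with Jensen's inequality for the convex function $t\mapsto t^{\kappa}$ (this is where $\kappa>1$ enters), this gives
\[
\frac{1}{|2Q|}\int_{2Q}g\ \le\ \Big(\frac{1}{|2Q|}\int_{2Q}g^{\kappa}\Big)^{1/\kappa}\ \lesssim\ \big(M(g^{\kappa})(y)\big)^{1/\kappa}\qquad(y\in Q).
\]
Multiplying by $|2Q|\approx|Q|$ and then averaging the bound $\int_{2Q}g\lesssim|Q|\,(M(g^{\kappa})(y))^{1/\kappa}$ over $y\in Q$ yields $\int_{2Q}g\lesssim\int\chi_Q\,(M(g^{\kappa}))^{1/\kappa}$.

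Finally I would feed in the boundedness of $M$. By Lemma \ref{potencia s}, $\|g^{\kappa}\|_{L^{(sp(\cdot))'/\kappa}_{\omega^{-\kappa/s}}}=\|g\|_{L^{(sp(\cdot))'}_{\omega^{-1/s}}}^{\,\kappa}\le1$, so item $(ii)$ of Definition \ref{pesos Wp} gives $\|M(g^{\kappa})\|_{L^{(sp(\cdot))'/\kappa}_{\omega^{-\kappa/s}}}\le C$, and one more application of Lemma \ref{potencia s} turns this into $\|(M(g^{\kappa}))^{1/\kappa}\|_{L^{(sp(\cdot))'}_{\omega^{-1/s}}}\le C^{1/\kappa}$. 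H\"older's inequality (Lemma \ref{Holder ineq}) with exponent $sp(\cdot)$ and weight $\omega^{1/s}$ then gives
\[
\int\chi_Q\,(M(g^{\kappa}))^{1/\kappa}\ \lesssim\ \|\chi_Q\|_{L^{sp(\cdot)}_{\omega^{1/s}}}\,\big\|(M(g^{\kappa}))^{1/\kappa}\big\|_{L^{(sp(\cdot))'}_{\omega^{-1/s}}}\ \lesssim\ \|\chi_Q\|_{L^{sp(\cdot)}_{\omega^{1/s}}}.
\]
Taking the supremum over $g$ gives $\|\chi_{2Q}\|_{L^{sp(\cdot)}_{\omega^{1/s}}}\lesssim\|\chi_Q\|_{L^{sp(\cdot)}_{\omega^{1/s}}}$, and raising to the power $s$ finishes the proof.

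The step I expect to be the main obstacle is reconciling the space on which $M$ is known to be bounded with the one dictated by duality: item $(ii)$ of Definition \ref{pesos Wp} supplies the weighted maximal inequality on the ``twisted'' space $L^{(sp(\cdot))'/\kappa}_{\omega^{-\kappa/s}}$, whereas the dual of $L^{sp(\cdot)}_{\omega^{1/s}}$ is $L^{(sp(\cdot))'}_{\omega^{-1/s}}$, and these two spaces cannot be made to coincide unless $\kappa=1$. This is precisely why $M$ must be applied to $g^{\kappa}$ rather than to $g$, with the Jensen estimate $\big(|2Q|^{-1}\!\int_{2Q}g\big)^{\kappa}\le|2Q|^{-1}\!\int_{2Q}g^{\kappa}$ and the homogeneity in Lemma \ref{potencia s} bridging the discrepancy; the hypothesis $\kappa>1$ in Definition \ref{pesos Wp}$(ii)$ is exactly what legitimizes this device. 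Everything else is routine bookkeeping with the power rule of Lemma \ref{potencia s} and the finiteness of cube norms furnished by Definition \ref{pesos Wp}$(i)$.
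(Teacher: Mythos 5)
Your argument is correct, but it takes a genuinely different route from the paper's. The paper first asserts that $\omega\in\mathcal{W}_{p(\cdot)}$ makes $M$ bounded on the \emph{untwisted} dual space $L^{(sp(\cdot))'}_{\omega^{-1/s}}$ (the passage from the $\kappa$-twisted boundedness in Definition \ref{pesos Wp}$(ii)$ down to $\kappa=1$ is exactly the Jensen device $(Mg)^{\kappa}\le M(g^{\kappa})$ that you use, only applied once and for all at the level of the operator), and then simply quotes the necessary $\mathcal{A}_{p(\cdot)}$-type condition of Lemma \ref{Ap(.) cond} with the roles of the exponent and its conjugate exchanged: this yields $\|\chi_{2Q}\|_{L^{sp(\cdot)}_{\omega^{1/s}}}\,\|\chi_{2Q}\|_{L^{(sp(\cdot))'}_{\omega^{-1/s}}}\lesssim|Q|$, after which monotonicity of the dual norm and H\"older's inequality applied to $|Q|=\int\chi_Q$ finish the proof in three lines. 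You instead dualize $\|\chi_{2Q}\|_{L^{sp(\cdot)}_{\omega^{1/s}}}$ via Proposition \ref{norma equivalente} and prove the required testing estimate from scratch, feeding $g^{\kappa}$ directly into the twisted maximal inequality that Definition \ref{pesos Wp}$(ii)$ literally provides. The ingredients (duality, H\"older, Jensen, boundedness of $M$) are the same, but your version is more self-contained: it needs neither Lemma \ref{Ap(.) cond} nor the reduction to $\kappa=1$, which the paper states without proof; the price is a longer computation, together with the same implicit assumption the paper makes throughout, namely that $(sp(\cdot))'$ is a genuine finite exponent, i.e. $sp_{-}>1$, which is needed for Lemma \ref{potencia s} and Proposition \ref{norma equivalente} to apply.
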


\begin{proof} 
By the order preserving property of the norm $\| \, \cdot \, \|_{L^{p(\cdot)}_\omega}$, we have that 
\begin{equation} \label{chi2}
\| \chi_{Q} \|_{L^{p(\cdot)}_{\omega}} \leq \| \chi_{2Q} \|_{L^{p(\cdot)}_{\omega}}.
\end{equation}

On the other hand, since $\omega \in \mathcal{W}_{p(\cdot)}$, the maximal operator is bounded on $L^{(sp(\cdot))'}_{\omega^{-1/s}}$. 
Then, by Lemma \ref{Ap(.) cond} (exchanging the roles of $p(\cdot)$ and $p'(\cdot)$), (\ref{chi2}) above, and H\"older's inequality 
applied to $|Q| = \int \chi_Q(x) dx$ (see Lemma \ref{Holder ineq}), we have that
\[
\| \chi_{2Q} \|^{1/s}_{L^{p(\cdot)}_{\omega}} = \| \chi_{2Q} \|_{L^{sp(\cdot)}_{\omega^{1/s}}} \leq C |Q| 
\| \chi_{2Q} \|^{-1}_{L^{(sp(\cdot))'}_{\omega^{-1/s}}} \leq C |Q| 
\| \chi_{Q} \|^{-1}_{L^{(sp(\cdot))'}_{\omega^{-1/s}}} 
\]
\[
\leq C \| \chi_{Q} \|_{L^{sp(\cdot)}_{\omega^{1/s}}} = C
\| \chi_{Q} \|^{1/s}_{L^{p(\cdot)}_{\omega}}.
\]
This completes the proof.
\end{proof}

A weight $\omega$ satisfies the {\it reverse H\"older inequality} with exponent $s > 1$, denoted by $\omega \in RH_{s}$, if there exists a constant $C> 0$ such that for every cube $Q$,
$$\left(\frac{1}{|Q|} \int_{Q} [\omega(x)]^{s} dx \right)^{\frac{1}{s}} \leq C \frac{1}{|Q|} \int_{Q} \omega(x) dx;$$
the best possible constant is denoted by $[\omega]_{RH_s}$. We observe that if $\omega \in RH_s$, then by H\"older's inequality, 
$\omega \in RH_t$ for all $1 < t < s$, and $[\omega]_{RH_t} \leq [\omega]_{RH_s}$. 

\begin{lemma} \label{equiv norm}
Let $p(\cdot) \in \mathcal{P}^{\log}(\mathbb{R}^{n})$ with $0 < p_{-} \leq p_{+} < \infty$, $\gamma \in \mathbb{R}$, and let 
$f \in L^{1}_{loc}(\mathbb{R}^{n})$ be a function such that $|f(x)| \lesssim (1+|x|)^{\gamma}$. Then 
$\| f \|_{L^{p(\cdot)}(\mathbb{R}^{n})} \approx \| f \|_{L^{p_{\infty}}(\mathbb{R}^{n})}$.
\end{lemma}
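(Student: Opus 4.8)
The plan is to split $\mathbb{R}^{n}$ into the unit cube $Q_{0}:=Q(0,1)$ and its complement, estimate each contribution, and recombine. The recombination uses only the elementary identity: for every measurable $g$ and every exponent $r(\cdot)$, constant or variable, with $0<r_{-}\le r_{+}<\infty$,
\[
\| g \|_{L^{r(\cdot)}}\ \approx\ \| g\chi_{Q_{0}} \|_{L^{r(\cdot)}}+\| g\,\chi_{\mathbb{R}^{n}\setminus Q_{0}} \|_{L^{r(\cdot)}},
\]
with constants depending only on $r_{-}$, which follows from monotonicity of the norm together with the quasi-triangle inequality. Taking $r(\cdot)=p(\cdot)$ and then $r(\cdot)\equiv p_{\infty}$, the lemma reduces to the two separate equivalences $\| f\chi_{Q_{0}}\|_{L^{p(\cdot)}}\approx\| f\chi_{Q_{0}}\|_{L^{p_{\infty}}}$ and $\| f\chi_{\mathbb{R}^{n}\setminus Q_{0}}\|_{L^{p(\cdot)}}\approx\| f\chi_{\mathbb{R}^{n}\setminus Q_{0}}\|_{L^{p_{\infty}}}$.

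On $Q_{0}$ the hypothesis gives $|f(x)|\le C\,2^{|\gamma|}$, so $f\chi_{Q_{0}}$ is a bounded function supported on the fixed set $Q_{0}$ of finite measure, and $p(\cdot)$ takes values in $[p_{-},p_{+}]$ there with oscillation controlled by local log-H\"older continuity. A direct estimate of the modular $\rho$, or the continuous embeddings between variable- and constant-exponent Lebesgue spaces over a fixed set of finite measure (see \cite[Corollary 2.48]{Uribe2}), then gives $\| f\chi_{Q_{0}}\|_{L^{p(\cdot)}}\approx\| f\chi_{Q_{0}}\|_{L^{p_{\infty}}}$, with constants depending only on $n$, $p_{\pm}$, $\gamma$ and the implicit constant in the hypothesis.

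The main step is the contribution of $\mathbb{R}^{n}\setminus Q_{0}$, which I would derive from the pointwise comparison
\[
|f(x)|^{p(x)}\ \approx\ |f(x)|^{p_{\infty}}\qquad\text{for a.e. }x\in\mathbb{R}^{n}\setminus Q_{0},
\]
with constants independent of $x$. Write $|f(x)|^{p(x)}=|f(x)|^{p_{\infty}}\cdot\exp\!\bigl((p(x)-p_{\infty})\log|f(x)|\bigr)$; it suffices to bound $(p(x)-p_{\infty})\log|f(x)|$ uniformly. For $x\notin Q_{0}$ the size of $f$ forces $\bigl|\log|f(x)|\bigr|\lesssim\log(e+|x|)$, and log-H\"older continuity at infinity gives $|p(x)-p_{\infty}|\le C_{\infty}/\log(e+|x|)$, so that $\bigl|(p(x)-p_{\infty})\log|f(x)|\bigr|\lesssim 1$, which is the desired bound. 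Integrating the pointwise comparison over $\mathbb{R}^{n}\setminus Q_{0}$, and doing the same after dividing $f$ by any $\lambda>0$ (the extra factor $\lambda^{p_{\infty}-p(x)}$ being bounded for each fixed $\lambda$ since $|p_{\infty}-p(x)|\le p_{+}-p_{-}$), turns it into a comparison of the modulars of $f\chi_{\mathbb{R}^{n}\setminus Q_{0}}$; by the standard relations between the modular and the Luxemburg norm, valid because $p_{+}<\infty$, this yields $\| f\chi_{\mathbb{R}^{n}\setminus Q_{0}}\|_{L^{p(\cdot)}}\approx\| f\chi_{\mathbb{R}^{n}\setminus Q_{0}}\|_{L^{p_{\infty}}}$. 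Combining the two contributions finishes the proof.

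I expect the crux to be twofold. First, the pointwise comparison on $\mathbb{R}^{n}\setminus Q_{0}$, and within it the estimate $\bigl|\log|f(x)|\bigr|\lesssim\log(e+|x|)$: the upper bound $|f(x)|\lesssim(1+|x|)^{\gamma}$ handles one side, while the other side requires matching lower control on $|f|$ on its support — it is precisely this that keeps $|f(x)|^{p(x)-p_{\infty}}$ bounded and so lets log-H\"older continuity at infinity absorb the gap between $p(\cdot)$ and $p_{\infty}$ — so the exact form of the hypothesis on the size of $f$ has to be exploited here. Second, the passage from the pointwise/modular comparison to the Luxemburg norms, where the non-homogeneity of the modular forces one to use the size of $f$ (in practice, that $|f|$ is comparable to a fixed power of $1+|x|$, which reduces matters to the model function $(1+|x|)^{\gamma}$ and the single question of when it lies in $L^{p(\cdot)}$ versus $L^{p_{\infty}}$) rather than just a one-sided modular inequality. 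The remaining pieces — the decomposition and the finite-measure comparison on $Q_{0}$ — are routine.
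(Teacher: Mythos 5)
Your route is genuinely different from the paper's: the paper's entire proof consists of raising $f$ to the power $1/s$ with $s>1/p_{-}$ (so that the exponent $sp(\cdot)$ lies above $1$, via Lemma \ref{potencia s} with $\omega\equiv 1$) and then quoting \cite[Lemma 2.7]{Diening} verbatim; no decomposition of $\mathbb{R}^{n}$ and no modular estimates appear. Your self-contained argument breaks down at exactly the two points you flag as the crux, and neither can be closed from the stated hypothesis. First, the pointwise comparison $|f(x)|^{p(x)}\approx|f(x)|^{p_{\infty}}$ off $Q_{0}$ needs $\bigl|\log|f(x)|\bigr|\lesssim\log(e+|x|)$ on $\{f\neq 0\}$, i.e.\ a \emph{lower} bound $|f(x)|\gtrsim(1+|x|)^{-C}$ there; the hypothesis $|f(x)|\lesssim(1+|x|)^{\gamma}$ supplies only the upper half, and the lower half is simply unavailable (take $f(x)=e^{-|x|^{2}}$, or $f=\chi_{E}$ with $E$ small and far from the origin). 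Second, even granting a modular comparison $\rho_{p(\cdot)}(g)\approx\rho_{p_{\infty}}(g)$, passing to Luxemburg norms requires that comparison for $g=f/\lambda$ with constants \emph{independent} of $\lambda$ at the relevant value $\lambda\approx\|f\|_{L^{p_{\infty}}}$; your bound on $\lambda^{p_{\infty}-p(x)}$ is "bounded for each fixed $\lambda$" by a quantity that degenerates as $\lambda\to0$ or $\lambda\to\infty$, so it cannot be absorbed. The piece on $Q_{0}$ is not routine either: \cite[Corollary 2.48]{Uribe2} gives only the one-sided embeddings $L^{p_{+}}(Q_{0})\hookrightarrow L^{p(\cdot)}(Q_{0})\hookrightarrow L^{p_{-}}(Q_{0})$, whereas for $f=\chi_{E}$ with $E$ a small ball centered at a point $x_{E}$ where $p(x_{E})\neq p_{\infty}$ one has $\|\chi_{E}\|_{L^{p(\cdot)}}\approx|E|^{1/p(x_{E})}$ against $\|\chi_{E}\|_{L^{p_{\infty}}}=|E|^{1/p_{\infty}}$, which are not uniformly comparable.

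These are not defects of your method alone: since $\chi_{E}$ satisfies the hypothesis with $\gamma=0$ and constant $1$, the equivalence cannot hold with constants depending only on $n$, $p(\cdot)$, $\gamma$ and the implicit constant in $|f(x)|\lesssim(1+|x|)^{\gamma}$. Any correct proof must therefore rely on the precise (in effect two-sided) hypotheses under which \cite[Lemma 2.7]{Diening} is actually proved, which is what the paper does by direct citation. If you want a self-contained argument, restate the lemma under a two-sided assumption $|f(x)|\approx(1+|x|)^{\gamma}$; then your pointwise comparison on $\mathbb{R}^{n}\setminus Q_{0}$ is valid, the norm is pinned to that of the model function $(1+|x|)^{\gamma}$, and the $\lambda$-uniformity issue disappears because the relevant $\lambda$ is confined to a fixed compact range determined by $\gamma$, $n$ and $p(\cdot)$.
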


\begin{proof}
We take $s > 1/p_{-}$. Then, by applying \cite[Lemma 2.7]{Diening} with $\omega \equiv 1$, we get
$\| f \|^{1/s}_{L^{p(\cdot)}} = \| |f|^{1/s} \|_{L^{sp(\cdot)}} \approx \| |f|^{1/s} \|_{L^{sp_{\infty}}} = \| f \|^{1/s}_{L^{p_{\infty}}}$.
\end{proof}

\begin{lemma} \label{estim w_p menos}
Let $p(\cdot) \in \mathcal{P}^{\log}(\mathbb{R}^{n})$ with $1 < p_{-} \leq p_{+} < \infty$ and $p_{\infty} = p_{-}$, and let $\omega$ be a weight such that $\omega(x)  \lesssim (1+|x|)^{\gamma}$ for some $\gamma \in \mathbb{R}$. If $\omega \in RH_{p_{+}}$, then for every cube 
$Q$ we have
\begin{equation} \label{estim pesada1}
\| \chi_{Q} \|_{L^{p(\cdot)}_{\omega}} \approx \left\{\begin{array}{c}
                                        \,\,\, \left[ \omega^{p_{-}}(Q) \right]^{1/p_{-}}, \hspace{1cm} \text{if} \,\, \ell(Q) > 1 \\\\
                                        \left[ \omega^{p_{-}(Q)}(Q) \right]^{1/p_{-}(Q)}, \,\, \text{if} \,\, \ell(Q) \leq 1 
                                                    \end{array} \right.. 
\end{equation}																			
\end{lemma}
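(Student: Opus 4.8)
The plan is to reduce the weighted variable-exponent norm $\|\chi_Q\|_{L^{p(\cdot)}_\omega}$ to an ordinary weighted $L^{p_\infty}$-computation via Lemma \ref{equiv norm}, and then to exploit the reverse-H\"older hypothesis to pass between the quantities $\omega^{p_-}(Q)$ and $\omega^{p_-(Q)}(Q)$. First I would apply Lemma \ref{potencia s} to write, for a suitable $s$, $\|\chi_Q\|^{1/s}_{L^{p(\cdot)}_\omega} = \|\chi_Q\,\omega\|^{1/s}_{L^{p(\cdot)}} = \||\chi_Q\,\omega|^{1/s}\|_{L^{sp(\cdot)}}$; choosing $s$ large enough that $sp_- > 1$ puts us in the range where Lemma \ref{equiv norm} can be invoked on the function $|\chi_Q\,\omega|^{1/s}$, which is dominated by $(1+|x|)^{\gamma/s}$ up to a constant (this is where the polynomial growth bound $\omega(x)\lesssim(1+|x|)^\gamma$ is used). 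This gives $\|\chi_Q\|_{L^{p(\cdot)}_\omega}\approx \|\chi_Q\,\omega\|_{L^{p_\infty}} = \left(\int_Q \omega(x)^{p_\infty}\,dx\right)^{1/p_\infty}$, and since $p_\infty = p_-$ by hypothesis, the right-hand side is exactly $[\omega^{p_-}(Q)]^{1/p_-}$.

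Wait — that argument would seem to give the first branch for \emph{all} cubes, not just those with $\ell(Q)>1$, so the subtlety must be that Lemma \ref{equiv norm} as stated compares global $L^{p(\cdot)}$ and $L^{p_\infty}$ norms and is only sharp, with uniform constants, when the cube is large; for small cubes the local log-H\"older condition forces $p(\cdot)$ to be essentially constant $=p_-(Q)$ on $Q$ but that constant need not equal $p_\infty$. So for $\ell(Q)\le 1$ the correct approach is different: I would use the local log-H\"older continuity to show that $p(\cdot)$ oscillates by at most $C/\log(1/\ell(Q))$ across $Q$, hence $\omega(x)^{p(x)}$ and $\omega(x)^{p_-(Q)}$ are comparable on $Q$ up to multiplicative constants depending only on the log-H\"older constant (using that $\omega$ restricted to $Q$, together with normalization by $\lambda$, stays in a controlled range — this is the standard "$p(\cdot)$ is locally constant in the modular sense" lemma, e.g. \cite[Lemma 2.7]{Diening} or its variable-exponent analogues). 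This yields $\|\chi_Q\|_{L^{p(\cdot)}_\omega}\approx\|\chi_Q\|_{L^{p_-(Q)}_\omega} = [\omega^{p_-(Q)}(Q)]^{1/p_-(Q)}$ for small cubes. The role of $\omega\in RH_{p_+}$ is to guarantee that $\omega^{p(\cdot)}$ is locally integrable with the right uniformity, so that these modular comparisons produce finite, comparable quantities; in particular $RH_{p_+}$ controls $\int_Q\omega^{p(x)}dx\le\int_Q\omega^{p_+}dx$ in terms of $(\int_Q\omega)^{p_+}|Q|^{1-p_+}$ and prevents degeneracy.

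Concretely, the key steps in order are: (1) rescale by Lemma \ref{potencia s} to exponent $sp(\cdot)>1$; (2) for $\ell(Q)>1$, invoke Lemma \ref{equiv norm} to replace $p(\cdot)$ by $p_\infty=p_-$ and read off $[\omega^{p_-}(Q)]^{1/p_-}$, checking the constants are uniform over large cubes (here one may split a large cube into unit subcubes and sum, or cite that \cite[Lemma 2.7]{Diening} gives uniform constants); (3) for $\ell(Q)\le1$, use local log-H\"older continuity of $p(\cdot)$ plus the reverse-H\"older integrability of $\omega$ to show $\int_Q(\omega/\lambda)^{p(x)}dx\approx\int_Q(\omega/\lambda)^{p_-(Q)}dx$ uniformly, hence the Luxemburg norms agree up to constants; (4) evaluate the constant-exponent norm $\|\chi_Q\|_{L^{p_-(Q)}_\omega}=[\omega^{p_-(Q)}(Q)]^{1/p_-(Q)}$.

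The main obstacle I anticipate is step (3): getting the modular equivalence $\int_Q(\omega/\lambda)^{p(x)}dx\approx\int_Q(\omega/\lambda)^{p_-(Q)}dx$ with constants independent of $Q$ and of the normalizing parameter $\lambda$ (which ranges over $(0,\infty)$ in the infimum defining the Luxemburg norm). The standard trick is that it suffices to verify the comparison at the critical $\lambda$ where the modular equals $1$, and there one separates the region $\{\omega/\lambda\ge1\}$ from $\{\omega/\lambda<1\}$ on $Q$ and uses $|p(x)-p_-(Q)|\lesssim 1/\log(1/\ell(Q))$ together with the bound $(\omega/\lambda)^{|p(x)-p_-(Q)|}\le \max\{1,(\omega/\lambda)^{1/\log(1/\ell(Q))}\}$; controlling the latter over $Q$ is precisely where $\omega\in RH_{p_+}$ (equivalently a quantitative local integrability of $\omega$) enters, since $\ell(Q)\le1$ makes $|Q|^{-1/\log(1/\ell(Q))}$ bounded and the reverse-H\"older bound tames the integral of high powers of $\omega$. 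A secondary technical point is ensuring the two branches match in order of magnitude near $\ell(Q)=1$, which follows automatically since both reduce to $\|\chi_Q\|_{L^{p(\cdot)}_\omega}$ with uniform constants on that transitional scale.
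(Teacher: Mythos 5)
Your opening step is correct, but you then talk yourself out of it, and that is where the gap opens. Lemma \ref{equiv norm}, applied to $f=\chi_Q\,\omega$ (which satisfies $|f(x)|\lesssim(1+|x|)^{\gamma}$ with a constant independent of $Q$), gives
$\|\chi_Q\|_{L^{p(\cdot)}_{\omega}}\approx\bigl[\omega^{p_-}(Q)\bigr]^{1/p_-}$
with uniform constants for \emph{every} cube, small or large; there is no hidden failure of sharpness for small cubes. The paper uses exactly this, and the two branches of \eqref{estim pesada1} are simply two equivalent expressions on small cubes. Consequently the only thing left to prove is the purely constant-exponent equivalence $\bigl[\omega^{p_-}(Q)\bigr]^{1/p_-}\approx\bigl[\omega^{p_-(Q)}(Q)\bigr]^{1/p_-(Q)}$ for $\ell(Q)\le1$. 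One direction is a single application of classical H\"older's inequality, using $p_-\le p_-(Q)$ and $|Q|\le1$ so that $\|\chi_Q\|_{(p_-(Q)/p_-)'}\le1$. The other direction uses $\omega\in RH_{p_+}\subset RH_{p_-(Q)}$ to bound $\bigl(|Q|^{-1}\int_Q\omega^{p_-(Q)}\bigr)^{1/p_-(Q)}$ by $|Q|^{-1}\int_Q\omega$, then the variable H\"older inequality $\int_Q\omega\lesssim\|\chi_Q\omega\|_{L^{p(\cdot)}}\|\chi_Q\|_{L^{p'(\cdot)}}$ and the Nakai--Sawano estimate $\|\chi_Q\|_{L^{p'(\cdot)}}\approx|Q|^{1/p'_+(Q)}$ with $p'_+(Q)=(p_-(Q))'$. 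No modular manipulation with a free normalizing parameter is needed.

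By discarding the all-cube validity of the first branch, you are forced into step (3), the modular equivalence $\int_Q(\omega/\lambda)^{p(x)}dx\approx\int_Q(\omega/\lambda)^{p_-(Q)}dx$ uniformly in $\lambda$ and $Q$, which you yourself flag as the main obstacle and do not prove. This is a genuine gap, not a technicality: the oscillation bound $|p(x)-p_-(Q)|\lesssim1/\log(1/\ell(Q))$ degenerates as $\ell(Q)\to1^-$ (there it only gives $p_+-p_-$, which is not small), the set $\{\omega/\lambda\ge1\}$ is not controlled by $RH_{p_+}$ alone once $\lambda$ ranges over all of $(0,\infty)$, and your remark that ``it suffices to verify the comparison at the critical $\lambda$'' still requires knowing that the critical $\lambda$'s for the two modulars are comparable, which is essentially the statement being proved. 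The lemma is true, but as written your argument for the $\ell(Q)\le1$ branch is incomplete; replacing step (3) by the two-sided comparison of $\omega^{p_-}(Q)$ and $\omega^{p_-(Q)}(Q)$ described above closes it.
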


\begin{proof}
Applying Lemma \ref{equiv norm}, with $f = \chi_Q \, \omega$, we obtain 
\begin{equation} \label{estim Hasto1}
\| \chi_Q \|_{L^{p(\cdot)}_\omega} = \| \chi_Q \, \omega \|_{L^{p(\cdot)}} \approx \left[ \omega^{p_{-}}(Q) \right]^{1/p_{-}},
\end{equation} 
for all cube $Q$.

From (\ref{estim Hasto1}) and since $1 < p_{-} \leq p_{-}(Q)$, by H\"older's inequality, we have
\[
\| \chi_Q \|_{L^{p(\cdot)}_{\omega}}^{p_{-}} \approx \int_{Q} [\omega(x)]^{p_{-}} dx \leq 
\left[\omega^{ p_{-}(Q)}(Q) \right]^{\frac{ p_{-}}{p_{-}(Q)}} \| \chi_Q \|_{\left(\frac{p_{-}(Q)}{p_{-}}\right)'} \leq 
\left[\omega^{ p_{-}(Q)}(Q) \right]^{\frac{ p_{-}}{p_{-}(Q)}}, 
\]
if $\ell(Q) \leq 1$. So,
\begin{equation} \label{estim Hasto2}
\| \chi_Q \|_{L^{p(\cdot)}_{\omega}} \lesssim \left[\omega^{ p_{-}(Q)}(Q) \right]^{\frac{1}{p_{-}(Q)}}, 
\end{equation}
holds for every cube $Q$ with $\ell(Q) \leq 1$.

On the other hand, $\omega \in RH_{p_{+}}$ and $p_{-}(Q) \leq p_{+}$, then $\omega \in RH_{p_{-}(Q)}$ and
\[
\left( \frac{1}{|Q|} \int_{Q} [\omega(x)]^{p_{-}(Q)} dx \right)^{1/p_{-}(Q)} \leq C |Q|^{-1} \int_{Q} \omega(x) dx \leq
C |Q|^{-1} \| \chi_Q \, \omega \|_{L^{p(\cdot)}} \| \chi_{Q} \|_{L^{p'(\cdot)}},
\]
\cite[Lemma 2.2]{Nakai} gives $\| \chi_{Q} \|_{L^{p'(\cdot)}} \approx |Q|^{1/p'_{+}(Q)}$ if $\ell(Q) \leq 1$, being 
$p'_{+}(Q) = (p_{-}(Q))'$, we obtain
\begin{equation} \label{estim Hasto3}
\left[ \omega^{p_{-}(Q)}(Q) \right]^{1/p_{-}(Q)} \lesssim \| \chi_Q \|_{L^{p(\cdot)}_\omega}, \,\,\,\, \text{if} \,\,\, \ell(Q) \leq 1.
\end{equation}
Finally, (\ref{estim Hasto1}), (\ref{estim Hasto2}) and (\ref{estim Hasto3}) give (\ref{estim pesada1}).
\end{proof}

\begin{corollary} \label{positive weights}
Let $p(\cdot) \in \mathcal{P}^{\log}(\mathbb{R}^{n})$ with $0 < p_{-} \leq p_{+} < \infty$ and $p_{\infty} = p_{-}$, and let 
$s > 1/p_{-}$. If $\omega(x)  \lesssim (1+|x|)^{\gamma}$ for some $\gamma \in \mathbb{R}$ and $\omega^{1/s} \in RH_{sp_{+}}$, then
(\ref{estim pesada1}) holds.
\end{corollary}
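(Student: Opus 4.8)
The plan is to lift the exponent by the factor $s$ so as to reduce the statement to Lemma \ref{estim w_p menos}, whose requirement that the lower exponent exceed $1$ fails for $p(\cdot)$ itself but holds for $sp(\cdot)$ precisely because $s > 1/p_{-}$. I would work with the exponent $sp(\cdot)$ and the weight $\omega^{1/s}$, and first check that this pair meets the hypotheses of Lemma \ref{estim w_p menos}. Since $p(\cdot) \in \mathcal{P}^{\log}(\mathbb{R}^{n})$ and $s$ is a positive constant, $sp(\cdot) \in \mathcal{P}^{\log}(\mathbb{R}^{n})$; its lower exponent is $\essinf_{\mathbb{R}^{n}} sp(\cdot) = s p_{-} > 1$, its upper exponent $s p_{+}$ is finite, and its value at infinity is $s p_{\infty} = s p_{-}$, which coincides with its lower exponent. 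From $\omega(x) \lesssim (1+|x|)^{\gamma}$ we get $\omega^{1/s}(x) \lesssim (1+|x|)^{\gamma/s}$, and by hypothesis $\omega^{1/s} \in RH_{s p_{+}}$, which is exactly the reverse-H\"older exponent required by Lemma \ref{estim w_p menos} for the exponent $sp(\cdot)$.

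Next I would record the identity linking the two norms: by Lemma \ref{potencia s}, applied with exponent $sp(\cdot)$, weight $\omega^{1/s}$ and power $s$, together with $|\chi_{Q}|^{s} = \chi_{Q}$, one has
\[
\| \chi_{Q} \|_{L^{p(\cdot)}_{\omega}} = \| \chi_{Q} \|_{L^{sp(\cdot)}_{\omega^{1/s}}}^{\, s}
\]
for every cube $Q$. Applying Lemma \ref{estim w_p menos} to $sp(\cdot)$ and $\omega^{1/s}$ (using that $\essinf_{Q} sp(\cdot) = s\, p_{-}(Q)$) gives
\[
\| \chi_{Q} \|_{L^{sp(\cdot)}_{\omega^{1/s}}} \approx \left\{ \begin{array}{ll} \left[ \left( \omega^{1/s} \right)^{s p_{-}}(Q) \right]^{1/(s p_{-})}, & \ell(Q) > 1, \\[1.5ex] \left[ \left( \omega^{1/s} \right)^{s p_{-}(Q)}(Q) \right]^{1/(s p_{-}(Q))}, & \ell(Q) \leq 1, \end{array} \right.
\]
with implicit constants independent of $Q$.

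Finally I would simplify, using $\left( \omega^{1/s} \right)^{s p_{-}} = \omega^{p_{-}}$ and $\left( \omega^{1/s} \right)^{s p_{-}(Q)} = \omega^{p_{-}(Q)}$, so that the two branches above equal $\left( \left[ \omega^{p_{-}}(Q) \right]^{1/p_{-}} \right)^{1/s}$ and $\left( \left[ \omega^{p_{-}(Q)}(Q) \right]^{1/p_{-}(Q)} \right)^{1/s}$ respectively; raising the resulting equivalence to the $s$-th power and invoking the identity from Lemma \ref{potencia s} yields exactly (\ref{estim pesada1}). I do not expect a genuine obstacle here; the only points needing care are the bookkeeping of exponents under the scaling by $s$ — in particular that $sp(\cdot)$ inherits the local and at-infinity log-H\"older conditions and that its value at infinity still equals its lower exponent — together with the correct identification of the powers of $\omega$ appearing in each range of $\ell(Q)$.
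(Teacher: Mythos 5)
Your proposal is correct and follows exactly the paper's own route: apply Lemma \ref{estim w_p menos} to the rescaled pair $sp(\cdot)$, $\omega^{1/s}$ (for which $sp_{-}>1$ and $\omega^{1/s}\in RH_{sp_{+}}$), and transfer the conclusion back via the identity $\| \chi_{Q} \|_{L^{p(\cdot)}_{\omega}}^{1/s} = \| \chi_{Q} \|_{L^{sp(\cdot)}_{\omega^{1/s}}}$ from Lemma \ref{potencia s}. The only difference is that you verify the hypotheses of the rescaled lemma in more detail than the paper does, which is a welcome addition rather than a deviation.
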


\begin{proof} Since $(sp(\cdot))_{+} = sp_{+}$, $(sp(\cdot))_{-} = sp_{-}$ and $(sp(\cdot))_{-}(Q) = sp_{-}(Q)$, 
Lemma \ref{estim w_p menos}, applied to $sp(\cdot)$ and $\omega^{1/s}$, gives
\[
\| \chi_Q \|_{L^{p(\cdot)}_\omega}^{1/s} = \| \chi_Q \|_{L^{sp(\cdot)}_{\omega^{1/s}}} \approx \left\{\begin{array}{c}
                                        \,\,\, \left[ \omega^{p_{-}}(Q) \right]^{1/sp_{-}}, \hspace{1cm} \text{if} \,\, \ell(Q) > 1 \\\\
                                        \left[ \omega^{p_{-}(Q)}(Q) \right]^{1/sp_{-}(Q)}, \,\, \text{if} \,\, \ell(Q) \leq 1 
                                                    \end{array} \right.. 
\]
So, the lemma follows.
\end{proof}

\begin{lemma} \label{estim w_p mas}
Let $p(\cdot) \in \mathcal{P}^{\log}(\mathbb{R}^{n})$ with $1 < p_{-} \leq p_{+} < \infty$ and $p_{\infty} = p_{+}$. 
If $\omega \in RH_{p_{+}}$, then for every cube $Q$ we have
\begin{equation} \label{estim pesada2}
\| \chi_{Q} \|_{L^{p(\cdot)}_{\omega}} \approx \left\{\begin{array}{c}
                                        \,\,\, \left[ \omega^{p_{+}}(Q) \right]^{1/p_{+}}, \hspace{1cm} \text{if} \,\, \ell(Q) > 1 \\\\
                                        \left[ \omega^{p_{+}(Q)}(Q) \right]^{1/p_{+}(Q)}, \,\, \text{if} \,\, \ell(Q) \leq 1 
                                                    \end{array} \right.. 
\end{equation}																			
\end{lemma}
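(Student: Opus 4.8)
The plan is to mimic the proof of Lemma \ref{estim w_p menos}, but now exploiting that $p_{\infty} = p_{+}$ rather than $p_{\infty} = p_{-}$, so that the roles of the infimum and supremum over $Q$ get exchanged. The basic idea is that for large cubes the behaviour of $\|\chi_Q\|_{L^{p(\cdot)}_\omega}$ is governed by $p_\infty = p_+$ via Lemma \ref{equiv norm}, while for small cubes the local log-H\"older continuity pins the exponent on $Q$ between $p_-(Q)$ and $p_+(Q)$, and the reverse H\"older hypothesis $\omega\in RH_{p_+}$ lets us compare the resulting averages.

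First I would treat the case $\ell(Q) > 1$. Applying Lemma \ref{equiv norm} with $f = \chi_Q\,\omega$ (which is admissible since $\omega(x)\lesssim(1+|x|)^\gamma$ — note this hypothesis is used implicitly and should perhaps be added to the statement, as in Lemma \ref{estim w_p menos}) gives $\|\chi_Q\|_{L^{p(\cdot)}_\omega} = \|\chi_Q\,\omega\|_{L^{p(\cdot)}} \approx \|\chi_Q\,\omega\|_{L^{p_\infty}} = [\omega^{p_+}(Q)]^{1/p_+}$, since $p_\infty = p_+$. This simultaneously disposes of the large-cube case and provides the estimate $\|\chi_Q\|_{L^{p(\cdot)}_\omega}^{p_+} \approx \int_Q [\omega(x)]^{p_+}\,dx$ valid for \emph{all} cubes $Q$, which is the starting point for the small-cube analysis.

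Next, for $\ell(Q)\le 1$ I would prove the two-sided estimate $\|\chi_Q\|_{L^{p(\cdot)}_\omega} \approx [\omega^{p_+(Q)}(Q)]^{1/p_+(Q)}$. For the upper bound ``$\lesssim$'': since $\omega\in RH_{p_+}$ and $p_+(Q)\le p_+$, we have $\omega\in RH_{p_+(Q)}$, hence $\bigl(\tfrac1{|Q|}\int_Q\omega^{p_+(Q)}\bigr)^{1/p_+(Q)} \le C\,\tfrac1{|Q|}\int_Q\omega \le C|Q|^{-1}\|\chi_Q\,\omega\|_{L^{p(\cdot)}}\|\chi_Q\|_{L^{p'(\cdot)}}$ by H\"older's inequality (Lemma \ref{Holder ineq} with $\omega\equiv1$), and \cite[Lemma 2.2]{Nakai} gives $\|\chi_Q\|_{L^{p'(\cdot)}}\approx|Q|^{1/p'_-(Q)}$ for $\ell(Q)\le1$ with $p'_-(Q) = (p_+(Q))'$; rearranging yields $[\omega^{p_+(Q)}(Q)]^{1/p_+(Q)}\lesssim\|\chi_Q\|_{L^{p(\cdot)}_\omega}$, which is the lower bound — so in fact this computation produces ``$\gtrsim$''. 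For the reverse direction I would start from $\|\chi_Q\|_{L^{p(\cdot)}_\omega}^{p_+}\approx\int_Q\omega^{p_+}$ and, using $p_+(Q)\le p_+$ together with H\"older's inequality on $Q$ (exponents $p_+/p_+(Q)$ and its conjugate) and $|Q|\le1$, bound $\int_Q\omega^{p_+}$ — wait, this goes the wrong way, so instead I would raise $\|\chi_Q\|_{L^{p(\cdot)}_\omega}$ to the power $p_+(Q)$ and apply H\"older with exponents $p_+(Q)$-side, as done for \eqref{estim Hasto2}: from $\|\chi_Q\|_{L^{p(\cdot)}_\omega}^{p_+(Q)}$ one uses that on $Q$ the relevant exponent is at most $p_+(Q)$ so the modular estimate gives $\|\chi_Q\|_{L^{p(\cdot)}_\omega}^{p_+(Q)}\lesssim\int_Q\omega^{p_+(Q)}+(\text{lower order})$; since $\ell(Q)\le1$ and $\omega\in RH_{p_+}$ the averages are comparable, yielding $\|\chi_Q\|_{L^{p(\cdot)}_\omega}\lesssim[\omega^{p_+(Q)}(Q)]^{1/p_+(Q)}$.

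The main obstacle is the small-cube upper bound $\|\chi_Q\|_{L^{p(\cdot)}_\omega}\lesssim[\omega^{p_+(Q)}(Q)]^{1/p_+(Q)}$: unlike the $p_\infty=p_-$ case in Lemma \ref{estim w_p menos}, here one cannot simply invoke H\"older to pass from $p_-$ to $p_+(Q)$ in the convenient direction, so I would argue directly at the level of the Luxemburg modular, using local log-H\"older continuity to replace $p(\cdot)$ on $Q$ by the constant $p_+(Q)$ up to multiplicative constants (this is exactly the mechanism behind \cite[Lemma 2.2]{Nakai}), and then the reverse H\"older inequality to control the error. Everything else is routine manipulation paralleling the previous lemma, and the conclusion \eqref{estim pesada2} follows by combining the large-cube identity with the small-cube two-sided estimate.
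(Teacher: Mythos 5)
There are two genuine problems with your plan, both concentrated in the upper bounds. First, you route the large-cube case (and your ``valid for all cubes'' starting estimate) through Lemma \ref{equiv norm}, which forces you to add the hypothesis $\omega(x)\lesssim(1+|x|)^{\gamma}$ to the statement. That hypothesis is not in the lemma and is not needed: the paper instead sets $\lambda=[\omega^{p_{+}}(Q)]^{1/p_{+}}$ and applies the pointwise/modular comparison of \cite[Lemma 2.8]{Uribe1}, which uses only that $0\leq p_{+}-p(x)\leq C_{\infty}/\log(e+|x|)$ (i.e.\ $p_{\infty}=p_{+}$) to get
$\int_{Q}(\omega/\lambda)^{p(x)}dx\leq C\int_{Q}(\omega/\lambda)^{p_{+}}dx+\int_{Q}(e+|x|)^{-2n}dx\leq M$,
hence $\|\chi_{Q}\|_{L^{p(\cdot)}_{\omega}}\lesssim[\omega^{p_{+}}(Q)]^{1/p_{+}}$ for \emph{every} cube, with no growth assumption on $\omega$. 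This is precisely the point where the $p_{\infty}=p_{+}$ case differs structurally from Lemma \ref{estim w_p menos} (where the inequality $p(x)\geq p_{-}$ goes the wrong way for a modular estimate and the extra hypothesis on $\omega$ really is needed).

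Second, the small-cube upper bound --- which you correctly flag as the main obstacle --- is left without an actual argument. The mechanism you invoke (local log-H\"older continuity as in \cite[Lemma 2.2]{Nakai}, with reverse H\"older ``to control the error'') does not work: Nakai--Sawano's lemma handles $\|\chi_{Q}\|_{L^{p(\cdot)}}$ for the \emph{unweighted} norm, and replacing $p(\cdot)$ by $p_{+}(Q)$ in $\int_{Q}(\omega/\lambda)^{p(x)}dx$ costs a factor $(\lambda/\omega(x))^{p_{+}(Q)-p(x)}$ that local log-H\"older continuity cannot control when $\omega$ is small, while $RH_{p_{+}}$ compares averages of powers of $\omega$ and says nothing about this exponent error (in the paper, reverse H\"older is used only for the two lower bounds). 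The missing idea is that the same \cite[Lemma 2.8]{Uribe1} estimate applies verbatim with $p_{+}(Q)$ in place of $p_{+}$, because $0\leq p_{+}(Q)-p(x)\leq C_{\infty}/\log(e+|x|)$ on $Q$ (using $p_{+}(Q)\leq p_{+}=p_{\infty}$ and the decay condition at infinity, not local continuity). Your lower-bound computations ($RH_{p_{+}(Q)}$, H\"older, and $\|\chi_{Q}\|_{L^{p'(\cdot)}}\approx|Q|^{1/(p_{+}(Q))'}$ for $\ell(Q)\leq1$) do match the paper and are fine.
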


\begin{proof} By hypothesis $p_{\infty} = p_{+}$, thus there exists $C_{\infty}>0$ such that
\[
0 \leq p_{+} - p(x) \leq \frac{C_{\infty}}{\log(e+|x|)}, \,\,\, \text{for all} \,\, x \in \mathbb{R}^{n}.
\]
Let $\lambda = [\omega^{p_{+}}(Q)]^{1/p_{+}}$, by applying \cite[Lemma 2.8]{Uribe1} with $t = 2/p_{-}$, we have
\[
\int_{Q} \left( \frac{\omega(x)}{\lambda} \right)^{p(x)} dx \leq C_{2/p_{-}} \int_{Q} \left( \frac{\omega(x)}{\lambda} \right)^{p_{+}} dx +
\int_{Q} \frac{dx}{(e + |x|)^{2n}}
\]
\[
\leq C_{2/p_{-}} + \int_{\mathbb{R}^{n}} \frac{dx}{(e + |x|)^{2n}} =: M < \infty.
\]
So, from the definition of the "norm" $\| \cdot \|_{L^{p(\cdot)}_{\omega}}$ we have
\begin{equation} \label{wQ size1}
\| \chi_{Q} \|_{L^{p(\cdot)}_{\omega}} \leq \max\{ M^{1/p_{-}}, M^{1/p_{+}} \} [\omega^{p_{+}}(Q)]^{1/p_{+}}, \,\,\, 
\text{for all cube} \,\, Q.
\end{equation}
Now, if $\ell(Q) > 1$, the condition $\omega \in RH_{p_{+}}$, H\"older's inequality and \cite[Lemma 2.2]{Nakai} give
\[
\left( \frac{1}{|Q|} \int_Q [\omega(x)]^{p_{+}} dx \right)^{1/p_{+}} \leq \frac{C}{|Q|} \int_Q \omega(x) dx
\]
\[
\leq \frac{C}{|Q|} \| \chi_Q \, \omega \|_{L^{p(\cdot)}} \| \chi_Q \|_{L^{p'(\cdot)}} \leq 
\frac{C}{|Q|} \| \chi_Q \, \omega \|_{L^{p(\cdot)}} |Q|^{1- \frac{1}{p_{+}}}.
\]
Consequently,
\begin{equation} \label{wQ size2}
[\omega^{p_{+}}(Q)]^{1/p_{+}} \leq C \| \chi_Q \|_{L^{p(\cdot)}_{\omega}}, \,\,\, \text{if} \,\, \ell(Q) > 1.
\end{equation}
Then, (\ref{wQ size1}) and (\ref{wQ size2}) give
\begin{equation} \label{wQ size3}
\| \chi_Q \|_{L^{p(\cdot)}_{\omega}} \approx [\omega^{p_{+}}(Q)]^{1/p_{+}}, \,\,\, \text{if} \,\, \ell(Q) > 1.
\end{equation}

Now, we study the case $\ell(Q) \leq 1$. It is easy to check that
\[
0 \leq p_{+}(Q) - p(x) \leq \frac{C}{\log(e + |x|)}, \,\,\, \text{for all} \,\, x \in Q.
\]
Since $1 < p_{+}(Q) \leq p_{+}$, we have that $\omega \in RH_{p_{+}(Q)}$. Then, by reasoning as above, we get
\begin{equation} \label{wQ size4}
\| \chi_Q \|_{L^{p(\cdot)}_{\omega}} \approx \left[ \omega^{p_{+}(Q)}(Q) \right]^{1/p_{+}(Q)}, \,\,\, \text{if} \,\, \ell(Q) \leq 1.
\end{equation}
Finally, (\ref{wQ size3}) and (\ref{wQ size4}) give (\ref{estim pesada2}).
\end{proof}

\begin{corollary} \label{negative weights}
Let $p(\cdot) \in \mathcal{P}^{\log}(\mathbb{R}^{n})$ with $0 < p_{-} \leq p_{+} < \infty$ and $p_{\infty} = p_{+}$, and let 
$s > 1/p_{-}$. If $\omega^{1/s} \in RH_{sp_{+}}$, then (\ref{estim pesada2}) holds.
\end{corollary}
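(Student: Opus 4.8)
The plan is to follow the proof of Corollary \ref{positive weights} almost verbatim, replacing Lemma \ref{estim w_p menos} by Lemma \ref{estim w_p mas}: namely, to deduce the estimate for the pair $(p(\cdot), \omega)$ from the corresponding estimate for the ``dilated'' pair $(sp(\cdot), \omega^{1/s})$ via Lemma \ref{potencia s}. Since (\ref{estim pesada2}) is exactly the output of Lemma \ref{estim w_p mas}, the only work is to move the extra power $s$ in and out of the norm and to check that the dilated exponent still fits the hypotheses of that lemma.

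First I would verify that $sp(\cdot)$ meets the hypotheses of Lemma \ref{estim w_p mas}. Because $s>0$ is a constant, multiplying a log-H\"older continuous exponent by $s$ preserves both the local and the at-infinity conditions, so $sp(\cdot) \in \mathcal{P}^{\log}(\mathbb{R}^{n})$; moreover $(sp(\cdot))_{-} = sp_{-}$, $(sp(\cdot))_{+} = sp_{+}$, and the value of $sp(\cdot)$ at infinity is $sp_{\infty} = sp_{+} = (sp(\cdot))_{+}$. The assumption $s > 1/p_{-}$ gives $(sp(\cdot))_{-} = sp_{-} > 1$, and by hypothesis $\omega^{1/s} \in RH_{sp_{+}} = RH_{(sp(\cdot))_{+}}$. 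Hence Lemma \ref{estim w_p mas}, applied to $sp(\cdot)$ and $\omega^{1/s}$ and using $(sp(\cdot))_{+}(Q) = sp_{+}(Q)$ together with the identities $(\omega^{1/s})^{sp_{+}} = \omega^{p_{+}}$ and $(\omega^{1/s})^{sp_{+}(Q)} = \omega^{p_{+}(Q)}$, yields
\[
\| \chi_Q \|_{L^{sp(\cdot)}_{\omega^{1/s}}} \approx \left\{\begin{array}{c}
\left[ \omega^{p_{+}}(Q) \right]^{1/sp_{+}}, \hspace{1cm} \text{if} \,\, \ell(Q) > 1 \\\\
\left[ \omega^{p_{+}(Q)}(Q) \right]^{1/sp_{+}(Q)}, \,\, \text{if} \,\, \ell(Q) \leq 1
\end{array}\right..
\]

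Finally, Lemma \ref{potencia s} with $f = \chi_Q$ (so that $|\chi_Q|^{1/s} = \chi_Q$) gives $\| \chi_Q \|_{L^{p(\cdot)}_{\omega}}^{1/s} = \| \chi_Q \|_{L^{sp(\cdot)}_{\omega^{1/s}}}$; raising the displayed equivalence to the power $s$ cancels the $1/s$ in every exponent and produces precisely (\ref{estim pesada2}). I do not anticipate a genuine obstacle here: the only point requiring a moment's care is the verification that dilation by $s$ keeps the exponent in $\mathcal{P}^{\log}$ and carries the normalization $p_{\infty} = p_{+}$ to $(sp(\cdot))_{\infty} = (sp(\cdot))_{+}$, so that Lemma \ref{estim w_p mas} is genuinely applicable; the remaining steps are just the bookkeeping of how the reverse-H\"older exponent and the Lebesgue exponents rescale. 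Note in particular that, in contrast with Corollary \ref{positive weights}, no polynomial growth bound on $\omega$ is needed, since Lemma \ref{estim w_p mas} imposes none.
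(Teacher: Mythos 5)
Your proposal is correct and follows exactly the paper's route: the paper's proof is the one-line observation that the result follows from Lemmas \ref{potencia s} and \ref{estim w_p mas}, i.e.\ apply Lemma \ref{estim w_p mas} to the dilated pair $(sp(\cdot), \omega^{1/s})$ and rescale via Lemma \ref{potencia s}, which is precisely what you do (with the hypothesis-checking made explicit). Your closing observation that, unlike Corollary \ref{positive weights}, no polynomial growth bound on $\omega$ is needed is also consistent with the statements of the two lemmas.
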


\begin{proof}
The proof follows from Lemmas \ref{potencia s} and \ref{estim w_p mas}.
\end{proof}

\section{Off-diagonal Fefferman-Stein inequality} 

\, We apply extrapolation techniques to obtain an "off-diagonal" version of the Fefferman-Stein vector-valued maximal inequality on 
$L^{p(\cdot)}_{\omega}$. The following result generalizes Theorem 3.1 obtained in \cite{Ho1}.

\begin{theorem} \label{Feff-Stein ineq}
Let $0 \leq \alpha < n$, $1 < u < \infty$ and let $q(\cdot) : \mathbb{R}^{n} \to (0, \infty)$ be a measurable function with 
$0 < q_{-} \leq q_{+} < \infty$. If $\omega \in \mathcal{W}_{q(\cdot)}$, then for 
$\frac{1}{p(\cdot)} := \frac{1}{q(\cdot)} + \frac{\alpha}{n}$ and any $r > s_{\omega, \, q(\cdot)} + \frac{\alpha}{n}$,
\begin{equation} \label{maximal fract ineq}
\left\| \left( \sum_{j \in \mathbb{N}} (M_{\frac{\alpha}{r}}f_j)^{u} \right)^{1/u} \right\|_{L^{rq(\cdot)}_{\omega^{1/r}}} \lesssim 
\left\| \left( \sum_{j \in \mathbb{N}} |f_j|^{u} \right)^{1/u} \right\|_{L^{rp(\cdot)}_{\omega^{1/r}}},
\end{equation}
holds for all sequences of bounded measurable functions with compact support $\{ f_j \}_{j=1}^{\infty}$.
\end{theorem}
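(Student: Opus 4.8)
The plan is to deduce the vector-valued off-diagonal inequality from a scalar weighted extrapolation principle, so the first task is to reduce everything to a statement about $A_p$ (Muckenhoupt) weighted norm inequalities for the fractional maximal operator $M_{\alpha/r}$, which is classical (Muckenhoupt--Wheeden). Concretely, set $\beta := \alpha/r$ and note that the off-diagonal exponents satisfy $\frac{1}{p_0} - \frac{1}{q_0} = \frac{\beta}{n}$ whenever $\frac{1}{p(\cdot)} - \frac{1}{q(\cdot)} = \frac{\alpha}{n}$ is rescaled by $r$; the Muckenhoupt--Wheeden theorem then gives, for each exponent pair $(p_0, q_0)$ with $1 < p_0 \le q_0 < \infty$ and each $v \in A_{p_0, q_0}$, the two-weight bound $\|M_\beta f\|_{L^{q_0}(v^{q_0})} \lesssim \|f\|_{L^{p_0}(v^{p_0})}$, and more to the point the vector-valued version with $\ell^u$ in place of scalars, which follows from the scalar bound by the standard Rubio de Francia extrapolation for $\ell^u$-valued inequalities (this is where the hypothesis $1 < u < \infty$ is used).

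Next I would invoke the extrapolation machinery for variable Lebesgue spaces. The engine here is the result (e.g. in Cruz-Uribe--Fiorenza--Martell--P\'erez style extrapolation, adapted to the weighted variable setting as in \cite{Ho1}) that says: if an inequality $\|G\|_{L^{q_0}(v)} \lesssim \|F\|_{L^{p_0}(v)}$ holds for all $v$ in the appropriate Muckenhoupt class with constant depending only on the $A$-characteristic, then the corresponding variable-exponent weighted inequality holds, provided the target exponent and the weight are compatible with boundedness of $M$ on an associate-type space. The role of $\omega \in \mathcal{W}_{q(\cdot)}$ and of the lower bound $r > s_{\omega,q(\cdot)} + \alpha/n$ is precisely to guarantee this compatibility: by Proposition \ref{WqcWp}, $\omega \in \mathcal{W}_{q(\cdot)}$ implies $\omega \in \mathcal{W}_{p(\cdot)}$ with $s_{\omega,p(\cdot)} \le s_{\omega,q(\cdot)} + \alpha/n < r$, so $M$ is bounded on $L^{(rq(\cdot))'}_{\omega^{-1/r}}$ and on $L^{(rp(\cdot))'}_{\omega^{-1/r}}$; equivalently the dual/associate spaces behave well, which is exactly what the extrapolation theorem requires to transfer the scalar family of weighted bounds to the fixed variable-exponent space $L^{rq(\cdot)}_{\omega^{1/r}}$ on the left and $L^{rp(\cdot)}_{\omega^{1/r}}$ on the right. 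The raising to the power $r$ and the weight $\omega^{1/r}$ are handled uniformly via Lemma \ref{potencia s}, so one may as well prove the inequality for $r=1$ (i.e. directly for $L^{q(\cdot)}_\omega$ and $L^{p(\cdot)}_\omega$ under $r > s_{\omega,q(\cdot)}+\alpha/n$ rescaled) and then substitute.

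The key steps, in order, are: (1) rescale using Lemma \ref{potencia s} to reduce to proving $\|(\sum_j (M_\beta f_j)^u)^{1/u}\|_{L^{q(\cdot)}_\omega} \lesssim \|(\sum_j |f_j|^u)^{1/u}\|_{L^{p(\cdot)}_\omega}$ under the hypothesis that $M$ is bounded on the relevant associate space (which holds by Proposition \ref{WqcWp} and the choice of $r$); (2) cite the scalar Muckenhoupt--Wheeden off-diagonal bound $\|M_\beta f\|_{L^{q_0}(v^{q_0})} \lesssim \|f\|_{L^{p_0}(v^{p_0})}$ for $v \in A_{p_0,q_0}$; (3) upgrade it to the $\ell^u$-valued bound via Rubio de Francia extrapolation; (4) apply the variable-exponent weighted extrapolation theorem to pass from the family of $A_{p_0,q_0}$-weighted bounds to the single variable-exponent bound, checking the hypotheses on $q(\cdot)$ and $\omega$; (5) undo the rescaling. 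The main obstacle I anticipate is step (4): making sure the extrapolation theorem for \emph{weighted} variable Lebesgue spaces is available in exactly the off-diagonal form needed, and verifying that the structural hypothesis $\omega \in \mathcal{W}_{q(\cdot)}$ together with $r > s_{\omega,q(\cdot)} + \alpha/n$ translates into the precise boundedness of $M$ on the associate spaces that the extrapolation theorem consumes — this is a bookkeeping-heavy verification linking the indices $s_{\omega,q(\cdot)}$, $\mathbb{S}_{\omega,q(\cdot)}$ and $\kappa^s_{\omega,q(\cdot)}$ to the hypotheses of the abstract extrapolation statement, and it is the place where the proof really has content beyond citing black boxes.
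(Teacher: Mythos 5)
Your overall strategy---deduce the inequality from the Muckenhoupt--Wheeden weighted bounds for $M_{\alpha/r}$ via a Rubio de Francia extrapolation into the weighted variable space---is exactly the route the paper takes, but your proposal leaves the one step that carries all the content as an unproved black box, and the specific claims you make about how the hypotheses would feed into that step are not correct. First, the associate space on which $M$ must act boundedly is not $L^{(rq(\cdot))'}_{\omega^{-1/r}}$ (nor $L^{(rp(\cdot))'}_{\omega^{-1/r}}$): the paper picks $s$ with $s_{\omega,\,q(\cdot)} < s < r - \alpha/n$ (possible precisely because $r > s_{\omega,\,q(\cdot)} + \alpha/n$), sets $q_0 = r/s$ and $\frac{1}{p_0} = \frac{1}{q_0} + \frac{\alpha}{nr}$, writes $\|F\|^{q_0}_{L^{rq(\cdot)}_{\omega^{1/r}}} = \|F^{q_0}\|_{L^{sq(\cdot)}_{\omega^{1/s}}}$ by Lemma \ref{potencia s}, dualizes against $L^{(sq(\cdot))'}_{\omega^{-1/s}}$, and runs the iteration algorithm $\mathcal{R}$ on that one space; the exponent identity $\frac{p_0}{q_0}\bigl(\frac{r}{p_0}p(\cdot)\bigr)' = (sq(\cdot))'$ is what closes the loop after H\"older. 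Your assertion that $s_{\omega,\,p(\cdot)} < r$ already gives boundedness of $M$ on $L^{(rp(\cdot))'}_{\omega^{-1/r}}$ does not follow from the definitions: $s_{\omega,\,p(\cdot)}$ is an infimum, and nothing shows the set $\{s : M \ \text{bounded on}\ L^{(sp(\cdot))'}_{\omega^{-1/s}}\}$ is upward closed, so one must work at a level $s$ where boundedness is actually known. Relatedly, the hypothesis $r > s_{\omega,\,q(\cdot)} + \alpha/n$ is not there to control an associate space at level $r$; it is exactly what leaves room for the off-diagonal gap, i.e.\ what makes $p_0 > 1$ so that the Muckenhoupt--Wheeden input is available. (Proposition \ref{WqcWp} is not used in the paper's proof of this theorem at all.)

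Second, the reduction ``one may as well prove the inequality for $r=1$ and then substitute'' via Lemma \ref{potencia s} does not work as stated: relabeling $rq(\cdot)$, $rp(\cdot)$, $\omega^{1/r}$ as $q(\cdot)$, $p(\cdot)$, $\omega$ is a change of notation, not a rescaling of the functions $F = \bigl(\sum_j (M_{\alpha/r}f_j)^{u}\bigr)^{1/u}$ and $G$, which do not interact with raising norms to the power $r$. The power trick is genuinely needed, but it is the power $q_0 = r/s$ applied to $F$ inside the duality step, not the power $r$ applied to the whole inequality. Your steps (2) and (3) (scalar Muckenhoupt--Wheeden for $\mathcal{A}_{p_0,q_0}$ weights, upgraded to the $\ell^{u}$-valued form, which is where $1<u<\infty$ enters) are correct and match the paper, which invokes the vector-valued weighted estimate for $v \in \mathcal{A}_1$ through $v^{1/q_0} \in \mathcal{A}_{p_0,q_0}$. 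But step (4)---the passage to the weighted variable space---is the theorem itself, and as written your proposal neither supplies that argument nor correctly identifies the space on which the Rubio de Francia algorithm must be built.
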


\begin{proof}
Given $r > s_{\omega, \, q(\cdot)} + \frac{\alpha}{n}$, from the definition of $s_{\omega, \, q(\cdot)}$, we have 
$s > s_{\omega, \, q(\cdot)}$ such that $s + \frac{\alpha}{n} < r$ and $M$ is bounded on 
$L^{(sq(\cdot))'}_{w^{-1/s}}(\mathbb{R}^{n})$. Define
$$\mathcal{F} = \left\{ \left( \left( \sum_{j=1}^{K} (M_{\frac{\alpha}{r}}f_j)^{u} \right)^{1/u}, 
\left( \sum_{j=1}^{K} |f_j|^{u} \right)^{1/u} \right) : K \in \mathbb{N}, \{f_j \}_{j=1}^{K} \subset L^{\infty}_{comp} \right\},$$
where $L^{\infty}_{comp}$ denotes the set of bounded functions with compact support.

Let $q_0 = \frac{r}{s}$ and let $p_0$ be defined by $\frac{1}{p_0} := \frac{1}{q_0} + \frac{\alpha}{nr}$. Since 
$r > s + \frac{\alpha}{n}$ we have that $1 < p_0 < \frac{nr}{\alpha}$. From \cite[Theorem 3]{Muck} follows that there exists an universal constant $C > 0$ such that for any $(F, G) \in \mathcal{F}$ and any $v \in \mathcal{A}_1$ (for the definition of the $\mathcal{A}_{1}$ class, the reader may refer to \cite[Chapter 7]{Grafakos})
\begin{equation} \label{weighted fract ineq}
\int [F(x)]^{q_0} v(x) \, dx \leq C \left( \int [G(x)]^{p_0} [v(x)]^{p_0/q_0} \, dx \right)^{q_0/p_0},
\end{equation}
since $v \in \mathcal{A}_1$ implies that $v^{1/q_0} \in \mathcal{A}_{p_0, q_0}$ (for the definition of the $\mathcal{A}_{p,q}$ class, see \cite[inequality (1.1)]{Muck}). On the other hand, by Proposition \ref{norma equivalente}, we have
\begin{equation} \label{norma}
\| F \|^{q_0}_{L^{rq(\cdot)}_{\omega^{1/r}}} = \| F^{q_0} \|_{L^{sq(\cdot)}_{\omega^{1/s}}}
\leq C \sup \left\{ \int_{\mathbb{R}^{n}} \left| [F(x)]^{q_0} g(x) \right| dx : \| g \|_{L^{(sq(\cdot))'}_{\omega^{-1/s}}} \leq 1 \right\}
\end{equation}
for some constant $C > 0$.

Let $\mathcal{R}$ be the operator defined on $L^{(sq(\cdot))'}_{\omega^{-1/s}}$ by
\[
\mathcal{R}g(x) = \sum_{k=0}^{\infty} \frac{M^{k}g(x)}{2^{k} \| M \|_{L^{(sq(\cdot))'}_{\omega^{-1/s}}}^{k}},
\]
where, for $k \geq 1$, $M^{k}$ denotes $k$ iterations of the Hardy-Littlewood maximal operator $M$, $M^{0} = M$, and 
$\| M \|_{L^{(sq(\cdot))'}_{\omega^{-1/s}}}$ is the operator norm of the maximal operator $M$ on 
$L^{(sq(\cdot))'}_{\omega^{-1/s}}$. It follows immediately from this definition that:

$(i)$ if $g$ is non-negative, $g(x) \leq \mathcal{R}g(x)$ a.e. $x \in \mathbb{R}^{n}$;

$(ii)$ $\| \mathcal{R}g \|_{L^{(sq(\cdot))'}_{\omega^{-1/s}}} \leq 2 \| g \|_{L^{(sq(\cdot))'}_{\omega^{-1/s}}}$; 

$(iii)$ $\mathcal{R}g \in \mathcal{A}_1$ with $[\mathcal{R}g]_{\mathcal{A}_1} \leq 2 \| M \|_{L^{(sq(\cdot))'}_{\omega^{-1/s}}}$.
\\
Since $F$ is non-negative, we can take the supremum in (\ref{norma}) over those non-negative $g$ only. For any fixed non-negative 
$g \in L^{(sq(\cdot))'}_{\omega^{-1/s}}$, by $(i)$ above we have that
\begin{equation} \label{int g}
\int [F(x)]^{q_0} g(x) dx \leq \int [F(x)]^{q_0} (\mathcal{R}g)(x) dx.
\end{equation}
Then $(iii)$ and (\ref{weighted fract ineq}), and H\"older's inequality yield
\begin{equation} \label{int Rg}
\int [F(x)]^{q_0} (\mathcal{R}g)(x) dx \leq C \left( \int [G(x)]^{p_0} [(\mathcal{R}g)(x)]^{p_0 / q_0} dx \right)^{q_0/p_0} 
\end{equation}
\[
\leq C \| G^{p_0} \|_{L^{rp(\cdot)/p_0}_{\omega^{p_0/r}}}^{q_0/p_0} 
\|(\mathcal{R}g)^{p_0/q_0} \|_{L^{(rp(\cdot)/p_0)'}_{\omega^{-p_0/r}}}^{q_0/p_0}
\]
\[
= C \| G \|^{q_0}_{L^{rp(\cdot)}_{\omega^{1/r}}} 
\|\mathcal{R}g \|_{L^{\frac{p_0}{q_0} \left(\frac{rp(\cdot)}{p_0} \right)'}_{\omega^{-q_0/r}}}
\]
being $\frac{1}{rp(\cdot)} - \frac{1}{rq(\cdot)} = \frac{1}{p_0} - \frac{1}{q_0}$ and $q_0 = \frac{r}{s}$, we have 
$\frac{p_0}{q_0} \left( \frac{r}{p_0} p(\cdot) \right)' = \left( \frac{r}{q_0} q(\cdot) \right)' = (sq(\cdot))'$, so
\[
= C \| G \|^{q_0}_{L^{rp(\cdot)}_{\omega^{1/r}}} \| \mathcal{R}g \|_{L^{(sq(\cdot))'}_{\omega^{-1/s}}}
\]
now, $(ii)$ gives
\[
\leq C \| G \|^{q_0}_{L^{rp(\cdot)}_{\omega^{1/r}}} \| g \|_{L^{(sq(\cdot))'}_{\omega^{-1/s}}}.
\]
Thus, (\ref{int g}) and (\ref{int Rg}) lead to
\begin{equation} \label{norma2}
\int [F(x)]^{q_0} g(x) dx \leq C \| G \|^{q_0}_{L^{rp(\cdot)}_{\omega^{1/r}}},
\end{equation}
for all non-negative $g$ with $\| g \|_{L^{(sq(\cdot))'}_{\omega^{-1/s}}} \leq 1$. Then, (\ref{norma}) and (\ref{norma2}) give 
(\ref{maximal fract ineq}) for all finite sequences $\{f_j \}_{j=1}^{K} \subset L^{\infty}_{comp}$. Finally, by passing to the limit, we obtain (\ref{maximal fract ineq}) for all infinite sequences $\{f_j \}_{j=1}^{\infty} \subset L^{\infty}_{comp}$.
\end{proof}

\begin{corollary} \label{Estimate_qp}
Let $0 < \alpha < n$, $q(\cdot) : \mathbb{R}^{n} \to (0, \infty)$ be a measurable function with 
$0 < q_{-} \leq q_{+} < \infty$ and $\omega \in \mathcal{W}_{q(\cdot)}$. If $\frac{1}{p(\cdot)} := \frac{1}{q(\cdot)} + \frac{\alpha}{n}$ 
and $\| \chi_Q \|_{L^{q(\cdot)}_{\omega}} \approx |Q|^{-\alpha/n} \| \chi_Q \|_{L^{p(\cdot)}_{\omega}}$ for every cube $Q$, then 
for any sequence of scalars $\{ \lambda_j \}_{j \in \mathbb{N}}$, any family of cubes $\{ Q_j \}_{j \in \mathbb{N}}$, and 
any $\theta \in (0, \infty)$ fixed we have
\[
\left\| \sum_{j \in \mathbb{N}} \left( \frac{|\lambda_j| \chi_{Q_j}}{\| \chi_{Q_j} \|_{L^{q(\cdot)}_{\omega}}} \right)^{\theta} 
\right\|^{1/\theta}_{L^{q(\cdot)/\theta}_{\omega^{\theta}}} \lesssim
\left\| \sum_{j \in \mathbb{N}} \left( \frac{|\lambda_j| \chi_{Q_j}}{\| \chi_{Q_j} \|_{L^{p(\cdot)}_{\omega}}} \right)^{\theta} 
\right\|^{1/\theta}_{L^{p(\cdot)/\theta}_{\omega^{\theta}}}.
\]
\end{corollary}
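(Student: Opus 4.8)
The plan is to deduce the estimate from the off-diagonal Fefferman--Stein inequality (Theorem~\ref{Feff-Stein ineq}) by realizing the normalized characteristic functions as fractional maximal functions of appropriate test functions. First I would fix $\theta \in (0,\infty)$ and reduce to the case $\theta = 1$ by a rescaling: applying Lemma~\ref{potencia s} with exponent $\theta$, the claimed inequality is equivalent to the corresponding statement with $q(\cdot)$, $p(\cdot)$, $\omega$ replaced by $q(\cdot)/\theta$, $p(\cdot)/\theta$, $\omega^{\theta}$ and with the scalars $|\lambda_j|$ replaced by $|\lambda_j|^{\theta}$; note that $\frac{1}{p(\cdot)/\theta} = \frac{1}{q(\cdot)/\theta} + \frac{\theta\alpha/\theta}{n}$ does \emph{not} hold in general, so instead I would directly work with the modular/Luxemburg identity $\|\sum_j (\,\cdot\,)^{\theta}\|^{1/\theta}_{L^{q(\cdot)/\theta}_{\omega^{\theta}}}$ and absorb the power $\theta$ at the very end; the cleanest route is to keep $\theta$ general and choose the integrability parameter in Theorem~\ref{Feff-Stein ineq} large enough to accommodate it.

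The core computation is the following. For each $j$, let $g_j := |Q_j|^{-1}\chi_{Q_j}$, so that $\|g_j\|_{L^{1}} = 1$ and $g_j$ is bounded with compact support. For a point $x$ and a radius comparable to $\ell(Q_j)$, one has $M_{\alpha/r}(c_j g_j)(x) \gtrsim c_j |Q_j|^{-1+\alpha/(rn)} \chi_{Q_j}(x)$ by testing the fractional maximal function against the cube $Q_j$ itself; conversely $M_{\alpha/r}(c_j g_j) \lesssim c_j |Q_j|^{-1}\, M_{\alpha/r}(\chi_{Q_j})$, and on $Q_j$ the latter is $\approx |Q_j|^{\alpha/(rn)}$. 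Hence, choosing the normalizing constants $c_j$ so that $c_j |Q_j|^{-1+\alpha/(rn)} = |\lambda_j|/\|\chi_{Q_j}\|_{L^{p(\cdot)}_{\omega}}$, I get
\[
\frac{|\lambda_j|\,\chi_{Q_j}}{\|\chi_{Q_j}\|_{L^{p(\cdot)}_{\omega}}} \lesssim M_{\alpha/r}(c_j g_j) \quad\text{pointwise, with}\quad
\frac{|\lambda_j|\,\chi_{Q_j}}{|Q_j|^{\alpha/n}\,\|\chi_{Q_j}\|_{L^{p(\cdot)}_{\omega}}} \approx c_j\, g_j\, |Q_j|^{\alpha/(rn)} .
\]
Using the hypothesis $\|\chi_{Q_j}\|_{L^{q(\cdot)}_{\omega}} \approx |Q_j|^{-\alpha/n}\|\chi_{Q_j}\|_{L^{p(\cdot)}_{\omega}}$, the left-hand side of the corollary (with $\theta$) is thus dominated by $\|(\sum_j (M_{\alpha/r}(c_jg_j))^{u})^{1/u}\|_{L^{rq(\cdot)}_{\omega^{1/r}}}$ with $u = 1/\theta$ and $r = 1$ after the Lemma~\ref{potencia s} rescaling; more honestly, I apply Lemma~\ref{potencia s} to pull the exponent $\theta$ outside, landing in an $L^{sq(\cdot)}_{\omega^{1/s}}$-type norm to which Theorem~\ref{Feff-Stein ineq} applies with $u$ and $r$ chosen so that $ru \cdot (\text{stuff}) $ matches; then Theorem~\ref{Feff-Stein ineq} bounds this by $\|(\sum_j |c_j g_j|^{u})^{1/u}\|_{L^{rp(\cdot)}_{\omega^{1/r}}}$, and unwinding the definition of $c_j g_j$ together with the identity above and one more use of Lemma~\ref{potencia s} produces exactly the right-hand side $\|\sum_j (|\lambda_j|\chi_{Q_j}/\|\chi_{Q_j}\|_{L^{p(\cdot)}_{\omega}})^{\theta}\|^{1/\theta}_{L^{p(\cdot)/\theta}_{\omega^{\theta}}}$.

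The main obstacle is bookkeeping the exponents: matching the parameters $u$ and $r$ in Theorem~\ref{Feff-Stein ineq} to the given $\theta$ while staying in the admissible range $r > s_{\omega,q(\cdot)} + \alpha/n$, and making sure the weight powers ($\omega^{1/r}$ versus $\omega^{\theta}$) line up after the repeated application of Lemma~\ref{potencia s}. The pointwise comparison $M_{\alpha/r}(g_j)\approx |Q_j|^{\alpha/(rn)}$ on $Q_j$ and $\lesssim$ off it is elementary and uniform in $j$, so no difficulty arises there; the passage from finite to infinite sums is handled by monotone convergence exactly as in the proof of Theorem~\ref{Feff-Stein ineq}. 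Once the constants $c_j$ are chosen and Lemma~\ref{potencia s} is invoked to reduce to the $u=1/\theta$ diagonal-in-$\theta$ form, the result is a direct corollary.
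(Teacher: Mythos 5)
Your overall strategy is the paper's: use the hypothesis $\|\chi_Q\|_{L^{q(\cdot)}_\omega}\approx|Q|^{-\alpha/n}\|\chi_Q\|_{L^{p(\cdot)}_\omega}$ to pass to $p(\cdot)$-normalizations at the price of a factor $|Q_j|^{\alpha/n}$, absorb that factor into a fractional maximal function of $\chi_{Q_j}$, and invoke Theorem~\ref{Feff-Stein ineq}. But the step you defer --- ``matching the parameters $u$ and $r$'' --- is where the actual content lies, and as written your version does not close. A single application of $M_{\alpha/r}$ to $\chi_{Q_j}$ gains only $|Q_j|^{\alpha/(rn)}$ on $Q_j$, not $|Q_j|^{\alpha/n}$; since Theorem~\ref{Feff-Stein ineq} forces $r>s_{\omega,\,q(\cdot)}+\alpha/n>1$, these two powers differ by the factor $|Q_j|^{(\alpha/n)(1-1/r)}$, which is unbounded above and below over an arbitrary family of cubes. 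This is why your display $\frac{|\lambda_j|\chi_{Q_j}}{|Q_j|^{\alpha/n}\|\chi_{Q_j}\|_{L^{p(\cdot)}_\omega}}\approx c_j g_j |Q_j|^{\alpha/(rn)}$ cannot hold with constants uniform in $j$ (and, with your stated choice of $c_j$, is in fact off by exactly $|Q_j|^{\alpha/n}$). Likewise the tentative choice $u=1/\theta$, $r=1$ lies outside the admissible range $1<u<\infty$, $r>s_{\omega,\,q(\cdot)}+\alpha/n$: the value $r=1$ is never allowed, and $u=1/\theta\le 1$ whenever $\theta\ge 1$.

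The missing idea is to couple $u$ and $r$ so that the $u$-th power restores the full gain. Pick an integer $N\ge 2$ with $N/\theta>s_{\omega,\,q(\cdot)}+\alpha/n$ and use the elementary pointwise bound $|Q_j|^{\alpha/n}\chi_{Q_j}\le\bigl(M_{\alpha\theta/N}\chi_{Q_j}\bigr)^{N/\theta}$. Lemma~\ref{potencia s} rewrites the left-hand side as the $L^{Nq(\cdot)/\theta}_{\omega^{\theta/N}}$-norm of $\bigl\{\sum_j(\cdots)\bigr\}^{1/N}$ raised to the power $N/\theta$, and Theorem~\ref{Feff-Stein ineq} then applies with $r=N/\theta$, $u=N$ and $f_j=\bigl(|\lambda_j|/\|\chi_{Q_j}\|_{L^{p(\cdot)}_\omega}\bigr)^{\theta/N}\chi_{Q_j}$; since $|f_j|^{u}=\bigl(|\lambda_j|/\|\chi_{Q_j}\|_{L^{p(\cdot)}_\omega}\bigr)^{\theta}\chi_{Q_j}$, undoing Lemma~\ref{potencia s} produces exactly the right-hand side. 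With this coupling your argument becomes the paper's proof; without it, the exponents genuinely do not match.
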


\begin{proof} Since $\| \chi_Q \|_{L^{q(\cdot)}_{\omega}} \approx |Q|^{-\alpha/n} \| \chi_Q \|_{L^{p(\cdot)}_{\omega}}$ for every cube $Q$ we obtain
\[
\left\| \sum_{j} \left( \frac{|\lambda_j| \chi_{Q_j}}{\| \chi_{Q_j} \|_{L^{q(\cdot)}_{\omega}}} \right)^{\theta} 
\right\|^{1/\theta}_{L^{q(\cdot)/\theta}_{\omega^{\theta}}} =
\left\| \left\{ \sum_{j} \left( \frac{|\lambda_j| \chi_{Q_j}}{\| \chi_{Q_j} \|_{L^{q(\cdot)}_{\omega}}} \right)^{\theta} \right\}^{1/\theta}
\right\|_{L^{q(\cdot)}_{\omega}}
\]
\[
\lesssim \left\| \left\{ \sum_{j} \left( \frac{|\lambda_j| |Q_j|^{\alpha/n} \chi_{Q_j}}{\| \chi_{Q_j} \|_{L^{p(\cdot)}_{\omega}}} \right)^{\theta} \right\}^{1/\theta}
\right\|_{L^{q(\cdot)}_{\omega}},
\]
it is easy to check that $|Q_j|^{\alpha/n} \chi_{Q_j} (x) \leq M_{\frac{\alpha \theta}{N}}(\chi_{Q_j})^{\frac{N}{\theta}}(x)$ for all $j$ and all
$N \in \mathbb{N}$, so
\[
\lesssim \left\| \left\{ \sum_{j} \left( \frac{|\lambda_j| M_{\frac{\alpha \theta}{N}}(\chi_{Q_j})^{\frac{N}{\theta}}}{\| \chi_{Q_j} \|_{L^{p(\cdot)}_{\omega}}} \right)^{\theta} \right\}^{1/\theta}
\right\|_{L^{q(\cdot)}_{\omega}} =
\left\| \left\{ \sum_{j} \left( \frac{|\lambda_j|^{\theta} M_{\frac{\alpha \theta}{N}}(\chi_{Q_j})^{N}}{\| \chi_{Q_j} \|^{\theta}_{L^{p(\cdot)}_{\omega}}} \right) \right\}^{1/\theta}
\right\|_{L^{q(\cdot)}_{\omega}}
\]
\[
= \left\| \left\{ \sum_{j} \left( \frac{|\lambda_j|^{\theta} M_{\frac{\alpha \theta}{N}}(\chi_{Q_j})^{N}}{\| \chi_{Q_j} \|^{\theta}_{L^{p(\cdot)}_{\omega}}} \right) \right\}^{1/N}
\right\|^{N/\theta}_{L^{Nq(\cdot)/\theta}_{\omega^{\theta/N}}},
\]
taking $N$ such that $N/\theta > s_{\omega, \, q(\cdot)} + \frac{\alpha}{n}$, by Theorem \ref{Feff-Stein ineq}, we get
\[
\lesssim \left\| \left\{ \sum_{j} \left( \frac{|\lambda_j|^{\theta} \chi_{Q_j}}{\| \chi_{Q_j} \|^{\theta}_{L^{p(\cdot)}_{\omega}}} \right) \right\}^{1/N}
\right\|^{N/\theta}_{L^{Np(\cdot)/\theta}_{\omega^{\theta/N}}} =
\left\| \sum_{j} \left( \frac{|\lambda_j| \chi_{Q_j}}{\| \chi_{Q_j} \|_{L^{p(\cdot)}_{\omega}}} \right)^{\theta}
\right\|^{1/\theta}_{L^{p(\cdot)/\theta}_{\omega^{\theta}}}.
\]
This completes the proof.
\end{proof}

\section{Weighted variable estimates for Riesz potential}

Let $0 < \alpha < n$. The \textit{Riesz potential} of order $\alpha$ is the fractional operator $I_{\alpha }$ defined by
\begin{equation}
I_{\alpha }f(x)=\int_{\mathbb{R}^{n}} f(y) |x-y|^{\alpha - n}dy, \,\,\, x \in \mathbb{R}^{n},
\label{Ia}
\end{equation}
$f \in \mathcal{S}(\mathbb{R}^{n})$. A well known result of Sobolev gives the
boundedness of $I_{\alpha }$ from $L^{p}(\mathbb{R}^{n})$ into $L^{q}(\mathbb{R}^{n})$ for $1 < p <\frac{n}{\alpha }$ and 
$\frac{1}{q}=\frac{1}{p}- \frac{\alpha }{n}.$ In \cite{Capone} C. Capone, D. Cruz Uribe and A. Fiorenza
extend this result to the case of Lebesgue spaces with variable exponents $L^{p(\cdot)}.$ In \cite{Stein2} E. Stein and G. Weiss used the theory of harmonic functions of several variables to prove that these operators are bounded
from $H^{1}(\mathbb{R}^{n})$ into $L^{\frac{n}{n-\alpha }}(\mathbb{R}^{n})$. In \cite{Taibleson}, M. Taibleson and G. Weiss obtained the boundedness of the Riesz potential $I_{\alpha }$ from the Hardy spaces $H^{p}(\mathbb{R}^{n})$ into $H^{q}(\mathbb{R}^{n}),$ for $0<p<1$ and $\frac{1}{q}=\frac{1}{p}-\frac{\alpha }{n}$. These results were extended to the context of variable Hardy spaces by M. Urciuolo and
the author in \cite{Rocha1, Rocha3}. 

In this section we will prove that the Riesz potential $I_{\alpha}$ is bounded from weighted variable Hardy spaces into weighted variable Lebesgue spaces. The main tools that we will use are Theorem \ref{Feff-Stein ineq}, Corollary \ref{Estimate_qp} and 
Theorem \ref{w atomic decomp}.

\begin{theorem} \label{weighted Hp-Lq}
Let $0 < \alpha < n$, $q(\cdot) \in \mathcal{P}^{\log}(\mathbb{R}^{n})$ with $0 < q_{-} \leq q_{+} < \infty$, and 
$\omega \in \mathcal{W}_{q(\cdot)}$. If $\frac{1}{p(\cdot)} := \frac{1}{q(\cdot)} + \frac{\alpha}{n}$ and 
$\| \chi_Q \|_{L^{q(\cdot)}_{\omega}} \approx |Q|^{-\alpha/n} \| \chi_Q \|_{L^{p(\cdot)}_{\omega}}$ for every cube $Q$, then the Riesz potential $I_{\alpha}$ given by (\ref{Ia}) can be extended to a bounded operator 
$H^{p(\cdot)}_{\omega}(\mathbb{R}^{n}) \to L^{q(\cdot)}_{\omega}(\mathbb{R}^{n})$.
\end{theorem}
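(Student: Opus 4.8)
The plan is to prove the estimate first for functions in a dense subclass and then extend by density. Specifically, by Theorem~\ref{dense}, the space $\mathcal{S}_{0}(\mathbb{R}^{n})$ is dense in $H^{p(\cdot)}_{\omega}(\mathbb{R}^{n})$, and such functions lie in $L^{s}(\mathbb{R}^{n})$ for any $s>1$; so it suffices to establish
\[
\| I_{\alpha} f \|_{L^{q(\cdot)}_{\omega}} \lesssim \| f \|_{H^{p(\cdot)}_{\omega}}
\]
for $f \in \mathcal{S}_{0}(\mathbb{R}^{n})$ with a constant independent of $f$, and then to check that the resulting bounded extension is well defined (i.e.\ independent of the approximating sequence, which is automatic once the a priori bound holds on a dense subspace). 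First I would fix $p_{0}>1$ large enough that the Sobolev-type relation $\frac{1}{\tilde p_0} = \frac{1}{p_0} - \frac{\alpha}{n}$ makes sense (i.e.\ $p_0 < n/\alpha$) and also so that $\tilde p_0 > (\kappa^{s}_{\omega,\,p(\cdot)})'$ for a suitable $s \in \mathbb{S}_{\omega,\,p(\cdot)}$; note $\omega \in \mathcal{W}_{p(\cdot)}$ by Proposition~\ref{WqcWp}. Then, for $f \in \mathcal{S}_{0} \subset H^{p(\cdot)}_{\omega} \cap L^{p_0}$, apply Theorem~\ref{w atomic decomp} to get $f = \sum_j \lambda_j a_j$ converging in $L^{p_0}$, where the $a_j$ are $\omega$-$(p(\cdot), p_0, d)$ atoms with $d = \lfloor n s_{\omega,\,p(\cdot)} - n\rfloor$ supported on cubes $Q_j$, and with the quasinorm control \eqref{Hpw norm} for every $\theta \in (0,\infty)$.

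The core of the argument is then to estimate $I_{\alpha}(\lambda_j a_j)$ atom by atom and reassemble. Since convergence in $L^{p_0}$ passes through $I_{\alpha}$ (which maps $L^{p_0} \to L^{\tilde p_0}$ by Sobolev), $I_{\alpha} f = \sum_j \lambda_j I_{\alpha} a_j$ in $L^{\tilde p_0}$, hence a.e.\ along a subsequence. For the pointwise analysis I would split $\mathbb{R}^n = 2\sqrt{n}\,Q_j \cup (2\sqrt n\,Q_j)^c$ relative to each atom's cube. On $2\sqrt n\,Q_j$: the local part $b_j := \chi_{2\sqrt n Q_j}\, I_{\alpha} a_j$ satisfies $\|b_j\|_{L^{\tilde p_0}} \lesssim \|a_j\|_{L^{p_0}} \lesssim |Q_j|^{1/p_0}\|\chi_{Q_j}\|_{L^{p(\cdot)}_{\omega}}^{-1}$ by Sobolev and property $a_2)$; writing $|Q_j|^{1/p_0} = |Q_j|^{1/\tilde p_0}\cdot |Q_j|^{\alpha/n}$ this gives $\|b_j\|_{L^{\tilde p_0}} \lesssim A_j |Q_j|^{1/\tilde p_0}$ with $A_j = |Q_j|^{\alpha/n}\|\chi_{Q_j}\|_{L^{p(\cdot)}_{\omega}}^{-1} \approx \|\chi_{Q_j}\|_{L^{q(\cdot)}_{\omega}}^{-1}$ by the hypothesis \eqref{hipotesis admisible}. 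On the complement: using the vanishing moments $a_3)$ of order $d$, a standard Taylor-expansion estimate on the kernel $|x-y|^{\alpha-n}$ yields $|I_{\alpha}a_j(x)| \lesssim \|a_j\|_{L^1}\, l(Q_j)^{d+1}|x - z_j|^{\alpha - n - d - 1}$ for $x \notin 2\sqrt n Q_j$, and since $d+1 > n s_{\omega,\,p(\cdot)} - n \ge n(s_{\omega,\,q(\cdot)} + \alpha/n) - n$ one can dominate the tail by a multiple of $M_{\alpha/r}(\chi_{Q_j})^{N}$ for appropriate parameters — the key point is that the decay exponent $n + d + 1 - \alpha$ exceeds $n r'/r\cdot(\text{something})$ allowing the fractional maximal function of $\chi_{Q_j}$ (raised to a power) to absorb it, exactly as in the scalar-to-maximal reduction used in Corollary~\ref{Estimate_qp}.

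Having written $|I_{\alpha} f| \lesssim \sum_j |\lambda_j| \big(|b_j| + \text{tail}_j\big)$, I would handle the two pieces with the two preliminary results. For $\sum_j |\lambda_j| b_j$: Proposition~\ref{bk functions} applied with $r \mapsto \tilde p_0$, $s$, and the above $A_j$ gives, via Lemma~\ref{potencia s},
\[
\Big\| \sum_j \lambda_j b_j \Big\|_{L^{q(\cdot)}_{\omega}} = \Big\| \sum_j \lambda_j b_j \Big\|_{L^{s\,(q(\cdot)/s)}_{(\omega^{1/s})^{s} \cdots}} \lesssim \Big\| \sum_j A_j |\lambda_j|\, \chi_{Q_j} \Big\|_{L^{q(\cdot)}_{\omega}} \approx \Big\| \sum_j \frac{|\lambda_j|}{\|\chi_{Q_j}\|_{L^{q(\cdot)}_{\omega}}}\chi_{Q_j} \Big\|_{L^{q(\cdot)}_{\omega}};
\]
here one must check $s(q(\cdot)/s) = q(\cdot)$ and that $s \in \mathbb{S}_{\omega,\,q(\cdot)}$ with $\tilde p_0 > (\kappa^{s}_{\omega,\,q(\cdot)})'$, using Proposition~\ref{WqcWp} to transfer from $q(\cdot)$ to $p(\cdot)$. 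For the tail piece $\sum_j |\lambda_j|\,\text{tail}_j$, the fractional-maximal domination reduces it — after taking $\theta = 1$ and absorbing a power as in Corollary~\ref{Estimate_qp} — to $\big\| \sum_j |\lambda_j|\|\chi_{Q_j}\|_{L^{p(\cdot)}_{\omega}}^{-1}\chi_{Q_j} \big\|_{L^{q(\cdot)}_{\omega}}$ bounded by the $p(\cdot)$-side expression via Theorem~\ref{Feff-Stein ineq}. Both resulting quantities are $\lesssim \big\| \sum_j |\lambda_j|\|\chi_{Q_j}\|_{L^{p(\cdot)}_{\omega}}^{-1}\chi_{Q_j} \big\|_{L^{p(\cdot)}_{\omega}}$ — the first by Corollary~\ref{Estimate_qp} with $\theta = 1$, the second directly — which by \eqref{Hpw norm} with $\theta = 1$ is $\lesssim \|f\|_{H^{p(\cdot)}_{\omega}}$. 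The main obstacle I anticipate is the bookkeeping of exponents: verifying that a single choice of $p_0$ (hence $\tilde p_0$) and of $s$, $\kappa$ simultaneously satisfies the Sobolev constraint $p_0 < n/\alpha$, the boundedness requirement $\tilde p_0 > (\kappa^{s}_{\omega,\,q(\cdot)})'$ needed for Proposition~\ref{bk functions}, and the moment-order inequality $d + 1 > n s_{\omega,\,q(\cdot)} + \alpha - n$ needed for the tail decay to be summable against the fractional maximal operator; Proposition~\ref{WqcWp} (the comparison $s_{\omega,\,p(\cdot)} \le s_{\omega,\,q(\cdot)} + \alpha/n$ and the embedding $\mathcal{W}_{q(\cdot)} \subset \mathcal{W}_{p(\cdot)}$) is precisely what makes this mutual compatibility possible, so I would invoke it carefully at each point.
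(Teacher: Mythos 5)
Your overall architecture matches the paper's: density of $\mathcal{S}_0$, the atomic decomposition of Theorem \ref{w atomic decomp}, a local/tail split relative to each $Q_j$, Sobolev plus Proposition \ref{bk functions} for the local part, a Taylor/moment estimate dominated by a fractional maximal function plus Theorem \ref{Feff-Stein ineq} for the tail, and Corollary \ref{Estimate_qp} with \eqref{Hpw norm} to close. However, there are two genuine gaps. First, your moment count for the tail rests on the chain $d+1 > n s_{\omega,\,p(\cdot)} - n \ge n(s_{\omega,\,q(\cdot)} + \alpha/n) - n$, but the second inequality is the \emph{reverse} of what Proposition \ref{WqcWp} gives: it provides $s_{\omega,\,p(\cdot)} \le s_{\omega,\,q(\cdot)} + \alpha/n$, with no lower bound. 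With only $d = \lfloor n s_{\omega,\,p(\cdot)} - n\rfloor$ vanishing moments, the resulting $r = (n+d+1)/n$ exceeds $s_{\omega,\,p(\cdot)}$ but need not exceed $s_{\omega,\,q(\cdot)} + \alpha/n$, which is exactly the threshold Theorem \ref{Feff-Stein ineq} requires; the tail estimate then fails. The fix is to take atoms with the larger number $d = \lfloor n s_{\omega,\,q(\cdot)} + \alpha - n\rfloor$ of vanishing moments (legitimate, since one can always impose extra moments in the Calder\'on--Zygmund construction), which is what the paper does.

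Second, your application of Proposition \ref{bk functions} ``at $\theta=1$'' to bound $\bigl\| \sum_j \lambda_j b_j \bigr\|_{L^{q(\cdot)}_{\omega}}$ does not interface with the proposition as stated: its conclusion lives in $L^{sq(\cdot)}_{\omega^{1/s}}$ for some $s \in \mathbb{S}_{\omega,\,q(\cdot)}$, and $s=1$ is in general not in $\mathbb{S}_{\omega,\,q(\cdot)}$ (indeed $(q(\cdot))'$ is not even defined when $q_- \le 1$, which is the relevant Hardy-space range); your display $L^{s(q(\cdot)/s)}_{(\omega^{1/s})^{s}\cdots}$ signals the mismatch without resolving it. The correct route is the paper's: pick $\theta = 1/s < 1$ with $1/\theta \in \mathbb{S}_{\omega,\,q(\cdot)}$, use subadditivity of $t \mapsto t^{\theta}$ to pass to $\bigl\| \sum_j |\lambda_j|^{\theta} |b_j|^{\theta} \bigr\|^{1/\theta}_{L^{q(\cdot)/\theta}_{\omega^{\theta}}}$, apply Proposition \ref{bk functions} to the functions $b_j^{\theta} \in L^{q_0/\theta}$ (which forces the compatibility condition $q_0 > \theta\,(\kappa^{1/\theta}_{\omega,\,q(\cdot)})'$ on $q_0$, in place of your condition on $\tilde p_0$), and then return. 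Both gaps are repairable and the repaired argument coincides with the paper's proof, but as written the proposal's parameter bookkeeping does not go through.
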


\begin{proof}
Let $\omega \in \mathcal{W}_{q(\cdot)}$, by Definition \ref{pesos Wp}, there exists $0 < \theta < 1$ such that 
$\frac{1}{\theta} \in \mathbb{S}_{\omega, \, q(\cdot)}$. Now, we take $q_0 > \frac{n}{n - \alpha}$ such that 
$q_0 > \theta \left( \kappa_{\omega, \, q(\cdot)}^{1/\theta} \right)'$, and define  $\frac{1}{p_0} := \frac{1}{q_0} + \frac{\alpha}{n}$. 
By Proposition \ref{WqcWp}, for $\frac{1}{p(\cdot)} = \frac{1}{q(\cdot)} + \frac{\alpha}{n}$, we have that
$\mathcal{W}_{q(\cdot)} \subset \mathcal{W}_{p(\cdot)}$ and $s_{\omega, \, p(\cdot)} \leq s_{\omega, \, q(\cdot)} + \frac{\alpha}{n}$. So, given $f \in S_{0}(\mathbb{R}^{n})$, from Theorem \ref{w atomic decomp} (taking $s = p_0$) and since one can always choose
atoms with additional vanishing moment, we have that there exist a sequence of real numbers 
$\{\lambda_j\}_{j=1}^{\infty}$, a sequence of cubes $Q_j = Q(z_j,r_j)$ centered at $z_j$ with side length $r_j$ and 
$\omega - (p(\cdot), p_0, \lfloor n s_{\omega, \, q(\cdot)} + \alpha - n \rfloor)$ atoms $a_j$ supported on $Q_j$, satisfying
\begin{equation} \label{ineq Hpw}
\left\| \sum_{j} \left( \frac{|\lambda_j |}{\| \chi_{Q_j} \|_{L^{p(\cdot)}_{\omega}}} \right)^{\theta} \chi_{Q_j} \right\|_{L^{p(\cdot)/\theta}_{\omega^{\theta}}}^{1/\theta} \lesssim \| f \|_{H^{p(\cdot)}_{\omega}},
\end{equation}
and $f = \sum_{j} \lambda_j a_j$ converges in $L^{p_0}(\mathbb{R}^{n})$. By Sobolev's Theorem we have that 
$I_{\alpha }$ is bounded from  $L^{p_{0}}\left( \mathbb{R}^{n}\right) $ into $L^{q_{0}}\left( \mathbb{R}^{n}\right)$, so
\[
|I_{\alpha}f(x)| \leq \sum_{j} |\lambda_j| |I_{\alpha}a_j(x)|, \,\,\,\,\, \textit{a.e.} \, x \in \mathbb{R}^{n}.
\]
Then
\begin{equation} \label{J1_J2}
\| I_{\alpha}f \|_{L^{q(\cdot)}_{\omega}} \lesssim \left\| \sum_{j} |\lambda_j| \, \chi_{2 Q_j} \cdot I_{\alpha} a_j 
\right\|_{L^{q(\cdot)}_{\omega}} + \left\| \sum_{j} |\lambda_j| \, \chi_{\mathbb{R}^{n} \setminus 2 Q_j} \cdot I_{\alpha} a_j 
\right\|_{L^{q(\cdot)}_{\omega}} = J_1 + J_2,
\end{equation}
where $2Q_j = Q(z_j, 2r_j)$. To estimate $J_1$, we again apply Sobolev's Theorem and obtain
\[
\left\| (I_{\alpha} a_j)^{\theta} \right\|_{L^{q_{0}/\theta}(2Q_{j})} = 
\left\| I_{\alpha} a_j \right\|_{L^{q_{0}}(2Q_{j})}^{\theta} \lesssim \left\| a_j \right\|_{L^{p_{0}}}^{\theta} \lesssim 
\frac{| Q_j |^{\frac{\theta}{p_{0}}}}{\left\| \chi _{Q_j }\right\|_{L^{p(\cdot)/\theta}_{\omega^{\theta}}}} \lesssim 
\frac{\left| 2Q_{j} \right|^{\frac{\theta}{q_{0}}}}{\left\| \chi_{2Q_{j}} \right\|_{L^{q(\cdot)/\theta}_{\omega^{\theta}}}},
\]
where $\theta$, $q_0$ and $p_0$ are given above. The last inequality follows from the condition $\| \chi_Q \|_{L^{q(\cdot)}_{\omega}} 
\approx |Q|^{-\alpha/n} \| \chi_Q \|_{L^{p(\cdot)}_{\omega}}$ assumed for every cube $Q$ and Lemma \ref{2Q} applied to the exponent $q(\cdot)$. Since $\frac{1}{\theta} \in \mathbb{S}_{\omega, \, q(\cdot)}$ and $q_0 > \theta \left( \kappa_{\omega, \, q(\cdot)}^{1/\theta} \right)'$, we apply the $\theta$-inequality and Proposition \ref{bk functions} with $b_j = \left( \chi_{2 Q_j} \cdot I_{\alpha}(a_j) \right)^{\theta}$ and 
$A_j = \left\| \chi_{2Q_{j}} \right\|_{L^{q(\cdot)/\theta}_{\omega^{\theta}}}^{-1}$ to obtain
\[
J_1 \lesssim \left\| \sum_{j} \left(|\lambda_j| \, \chi_{2 Q_j} \cdot I_{\alpha} a_j \right)^{\theta} \right\|^{1/\theta}_{L^{q(\cdot)/\theta}_{\omega^{\theta}}} \lesssim \left\| \sum_{j} \left( \frac{|\lambda_j|}{\left\| \chi_{2Q_{j}} \right\|_{L^{q(\cdot)}_{\omega}}} 
\right)^{\theta} \chi_{2 Q_j} \right\|^{1/\theta}_{L^{q(\cdot)/\theta}_{\omega^{\theta}}}.
\]
Being $\chi_{2 Q_j} \leq M(\chi_{Q_j})^{2}$, by Theorem \ref{Feff-Stein ineq}, we have
\[
J_1 \lesssim \left\| \left\{ \sum_{j} \left( \frac{|\lambda_j|^{\theta/2}}{\left\| \chi_{2Q_{j}} \right\|^{\theta/2}_{L^{q(\cdot)}_{\omega}}}M(\chi_{Q_j})\right)^{2}  \right\}^{1/2} \right\|^{2/\theta}_{L^{2q(\cdot)/\theta}_{\omega^{\theta/2}}} \lesssim
\left\| \sum_{j} \left( \frac{|\lambda_j |}{\| \chi_{Q_j} \|_{L^{q(\cdot)}_{\omega}}} \right)^{\theta} \chi_{Q_j} 
\right\|_{L^{q(\cdot)/\theta}_{\omega^{\theta}}}^{1/\theta}.
\]
Corollary \ref{Estimate_qp} and (\ref{ineq Hpw}) give
\begin{equation} \label{J1}
J_1 \lesssim \left\| \sum_{j} \left( \frac{|\lambda_j |}{\| \chi_{Q_j} \|_{L^{p(\cdot)}_{\omega}}} \right)^{\theta} \chi_{Q_j} 
\right\|_{L^{p(\cdot)/\theta}_{\omega^{\theta}}}^{1/\theta} \lesssim \| f\|_{H^{p(\cdot)}_{\omega}}.
\end{equation}

Now, we estimate $J_2$. Let $d = \lfloor n s_{\omega, \, q(\cdot)} + \alpha - n \rfloor$, and let $a_j(\cdot)$ be a 
$\omega - (p(\cdot), p_0, d)$ atom supported on the cube $Q_j = Q(z_j, r_j)$. In view of the moment condition of $a_j(\cdot)$ we obtain
\[
I_{\alpha}a_j(x) = \int_{Q_j} \left( |x-y|^{\alpha- n} - q_{d}(x,y) \right) a(y) dy, \,\,\,\,\,\, \textit{for all} \,\,\, x \notin 2Q_j,
\]
where $q_{d}$ is the degree $d$ Taylor polynomial of the function $y \rightarrow |x-y|^{\alpha - n}$ expanded around $z_j$. By the standard estimate of the remainder term of the Taylor expansion, there exists $\xi$ between  $y$ and $z_j$ such that
\[
\left| |x-y|^{\alpha- n} - q_{d}(x,y) \right| \leq C |y - z_j|^{d +1} |x - \xi|^{-n+ \alpha -d -1},
\]
for any $y  \in Q_j$ and any $x \notin 2Q_j$, since $|x - \xi| \geq \displaystyle{\frac{|x- z_j|}{1+\sqrt{n}}}$, we get
\[
\left||x-y|^{\alpha- n} - q_{d}(x,y) \right| \leq C r^{d +1} |x - z_j|^{-n+ \alpha -d -1},
\]
this inequality and the condition $(a2)$ allow us to conclude that
\begin{equation} \label{Ialfa}
|I_{\alpha}a_j(x) | \lesssim \| a_j \|_{1} \, r^{d +1} |x-z_j|^{-n + \alpha - d -1} 
\end{equation}
\[
\lesssim |Q_j|^{1- \frac{1}{p_0}} \| a_j \|_{p_0} r^{d +1} |x - z_j|^{-n+ \alpha -d -1} \lesssim
\frac{r^{n+d+1}}{\| \chi_{Q_j} \|_{L^{p(\cdot)}_\omega}} |x-z_j|^{-n + \alpha - d -1}
\]
\[ 
\lesssim \frac{\left[ M_{\frac{\alpha n}{n+d+1}}(\chi_{Q_j}) (x) \right]^{\frac{n+d+1}{n}}}{\| \chi_{Q_j} \|_{L^{p(\cdot)}_\omega}}, \,\,\,\,\,\, for \,\, all \,\,\,\, x \notin 2Q_j.
\]
We put $r = \frac{n+d+1}{n}$, thus (\ref{Ialfa}) leads to
\[
J_2 \lesssim \left\| \left\{ \sum_j  \frac{|\lambda_j|}{\| \chi_{Q_j} \|_{L^{p(\cdot)}_\omega}} \left[ M_{\frac{\alpha}{r}}(\chi_{Q_j}) \right]^{r} \right\}^{1/r} \right\|^{r}_{L^{rq(\cdot)}_{\omega^{1/r}}}. 
\]
Since
\[
r = \frac{n + \lfloor n s_{\omega, \, q(\cdot)} + \alpha - n \rfloor + 1}{n} > s_{\omega, \, q(\cdot)} + \frac{\alpha}{n},
\]
to apply Theorem \ref{Feff-Stein ineq}, with $u=r$, we obtain
\[
J_2 \lesssim \left\| \left\{ \sum_j  \frac{|\lambda_j|}{\| \chi_{Q_j} \|_{L^{p(\cdot)}_\omega}} \chi_{Q_j} \right\}^{1/r} 
\right\|^{r}_{L^{rp(\cdot)}_{\omega^{1/r}}} = \left\| \sum_j \frac{|\lambda_j|}{\| \chi_{Q_j} \|_{L^{p(\cdot)}_\omega}} \chi_{Q_j}
\right\|_{L^{p(\cdot)}_{\omega}}. 
\]
Being $0 < \theta < 1$, the $\theta$-inequality and (\ref{ineq Hpw}) give
\begin{equation} \label{J2}
J_2 \lesssim \left\| \sum_j \left(\frac{|\lambda_j|}{\| \chi_{Q_j} \|_{L^{p(\cdot)}_\omega}} \right)^{\theta} \chi_{Q_j}
\right\|^{1/\theta}_{L^{p(\cdot)/\theta}_{\omega^{\theta}}} \lesssim \| f \|_{H^{p(\cdot)}_{\omega}}.
\end{equation}
Hence, (\ref{J1_J2}), (\ref{J1}) and (\ref{J2}) yield
\[
\| I_{\alpha} f \|_{L^{q(\cdot)}_{\omega}} \lesssim \| f \|_{H^{p(\cdot)}_{\omega}}, \,\,\, \text{for all} \,\, 
f \in \mathcal{S}_{0}(\mathbb{R}^{n}).
\]

Finally, by taking into account that $p(\cdot) \in \mathcal{P}^{\log}(\mathbb{R}^{n})$, $0 < p_{-} \leq p_{+} < \infty$ and 
$\omega \in \mathcal{W}_{p(\cdot)}$, the theorem follows from Theorem \ref{dense}.
\end{proof}

\section{Example: power weights}

For $\gamma > -n$, let $w_{\gamma}(x) = |x|^{\gamma}$ with $x \in \mathbb{R}^{n} \setminus \{0\}$. From the estimates 
obtained in \cite[Example 7.1.6, pp. 505]{Grafakos}, we observe that 
\begin{equation} \label{RH1}
\omega_{\gamma} \in \bigcap_{t > 1} RH_{t}, \,\,\, \text{if} \,\, \gamma \geq 0
\end{equation}
and
\begin{equation} \label{RH2}
\omega_{\gamma} \in \bigcap_{1 < t < n/|\gamma|} RH_{t}, \,\,\, \text{if} \,\, -n < \gamma < 0.
\end{equation}
Moreover, for $0 \leq \alpha < n$, 
$0 < p < \frac{n}{\alpha}$, $\frac{1}{q} = \frac{1}{p} - \frac{\alpha}{n}$ and each cube $Q$ we have that
\begin{equation} \label{positive gamma}
[\omega_{\gamma}^{p}(Q)]^{-1/p} [\omega_{\gamma}^{q}(Q)]^{1/q} = [\omega_{\gamma p}(Q)]^{-1/p} [\omega_{\gamma q}(Q)]^{1/q}
\approx |Q|^{-\alpha/n}, \,\,\, \text{for every} \,\, \gamma \geq 0
\end{equation}
and 
\[
[\omega_{\gamma}^{p}(Q)]^{-1/p} [\omega_{\gamma}^{q}(Q)]^{1/q} \approx |Q|^{-\alpha/n}, \,\,\, \text{for every} \,\, -n < \gamma < 0
\,\, \text{with} \,\, q < \frac{n}{|\gamma|}.
\]

Let $q(\cdot) \in \mathcal{P}^{\log}(\mathbb{R}^{n})$ with $0 < q_{\infty} = q_{-} \leq q_{+} < \infty$, and let $p(\cdot)$ be defined 
by $\frac{1}{p(\cdot)} := \frac{1}{q(\cdot)} + \frac{\alpha}{n}$. If $\gamma \geq 0 $, then, by (\ref{positive gamma}) and 
Corollary \ref{positive weights} (first applied to $q(\cdot)$ and then to $p(\cdot)$), we get
\[
\| \chi_Q \|_{L^{q(\cdot)}_{\omega_\gamma}} \approx |Q|^{-\alpha/n} \| \chi_Q \|_{L^{p(\cdot)}_{\omega_{\gamma}}}.
\]
It is clear that $q'(\cdot) \in \mathcal{P}^{\log}(\mathbb{R}^{n})$, $0 < q'_{-} \leq q'_{+} = q'_{\infty} < \infty$, 
$q'_{+} = (q_{-})'$ and $q'_{+}(Q) = (q_{-}(Q))'$. Then for every $0 \leq \gamma < \frac{n}{q'_{+}}$, by (\ref{RH1}) and 
Corollary \ref{positive weights}; and (\ref{RH2}) and Corollary \ref{negative weights} 
(applied to $\omega_{\gamma}$ and $q(\cdot)$; and $\omega_{-\gamma}$ and  $q'(\cdot)$ respectively), we obtain that
\[
\| \chi_Q \|_{L^{q(\cdot)}_{\omega_\gamma}} \| \chi_Q \|_{L^{(q(\cdot))'}_{\omega_{-\gamma}}} \approx
\left\{\begin{array}{c}
\,\,\, \left[ \omega_{\gamma}^{q_{-}}(Q) \right]^{1/q_{-}} [\omega_{-\gamma}^{q'_{+}}(Q)]^{1/q'_{+}}, \hspace{1cm} \text{if} \,\, 
\ell(Q) > 1 \\\\
\left[ \omega_{\gamma}^{q_{-}(Q)}(Q) \right]^{1/q_{-}(Q)} \left[\omega_{-\gamma}^{q'_{+}(Q)}(Q) \right]^{1/q'_{+}(Q)}, \,\, \text{if} \,\, 
\ell(Q) \leq 1 
                                                    \end{array} \right\} \approx |Q|, 
\]
for all cube $Q \subset \mathbb{R}^{n}$, where the last estimate follows from \cite[Example 7.1.6, pp. 505]{Grafakos}. In particular, for 
$s > 1/q_{-}$ and $0 \leq \gamma < \frac{sn}{(sq(\cdot))'_{+}}$, we have
\[
\| \chi_Q \|_{L^{sq(\cdot)}_{\omega_{\gamma/s}}} \| \chi_Q \|_{L^{(sq(\cdot))'}_{\omega_{-\gamma/s}}} \approx |Q|.
\] 
Since $\omega^{1/s}_{\pm \gamma}= \omega_{\pm \gamma/s}$, by \cite[Theorem 1.5]{Uribe1} we have that the maximal operator $M$ is bounded 
on $L^{(sq(\cdot))'}_{\omega_{\gamma}^{-1/s}}$. By the left-openness property of the set $\mathcal{A}_{(sq(\cdot))'}$ 
(see definition \ref{def Ap var} and \cite[Theorem 3.3]{Ho}), it follows that there exists a constant $\kappa > 1$ such that the maximal operator $M$ is bounded on $L^{(sq(\cdot))'/\kappa}_{\omega_{\gamma}^{-\kappa/s}}$. So, $\omega_{\gamma} \in \mathcal{W}_{q(\cdot)}$ and 
$\| \chi_Q \|_{L^{q(\cdot)}_{\omega_\gamma}} \approx |Q|^{-\alpha/n} \| \chi_Q \|_{L^{p(\cdot)}_{\omega_{\gamma}}}$ for every cube 
$Q \subset \mathbb{R}^{n}$.

\bigskip
%%%%%%%%%%%% Authors' addresses %%%%%%%%%%%%%
\address{% First Author
Departamento de Matem\'atica \\
Universidad Nacional del Sur \\
Bah\'{\i}a Blanca, 8000 Buenos Aires \\
Argentina}
{pablo.rocha@uns.edu.ar}


\begin{thebibliography}{99}

\bibitem{Bui} {\small H. Q. BUI}, Weighted Hardy spaces, Math. Nachr. 103 (1981), 45-62.

\bibitem{Capone} {\small C. CAPONE, D. CRUZ-URIBE and A. FIORENZA}, The fractional maximal operator and fractional integral on variable
$L^{p}$ spaces, Rev. Mat. Iberoam. 23 (3) (2007), 743-770.

\bibitem{Uribe1} {\small D. CRUZ-URIBE, A. FIORENZA and C. NEUGEBAUER}, Weighted norm inequalities for the maximal operator
on variable Lebesgue spaces, J. Math. Anal. Appl. 394 (2012), 744-760.

\bibitem{Uribe2} {\small D. CRUZ-URIBE and A. FIORENZA}, Variable Lebesgue Spaces, Birkh\"auser, 2013.

\bibitem{Uribe3} {\small D. CRUZ-URIBE and D. WANG}, Variable Hardy Spaces, Indiana Univ. Math. J. 63 (2) (2014), 447-493.

\bibitem{Diening2} {\small L. DIENING, P. HARJULEHTO, P. H\"AST\"O and M. RU$\check{Z}$I$\check{C}$KA}, Lebesgue and Sobolev spaces with Variable Exponents, Springer, 2011.  

\bibitem{Diening} {\small L. DIENING and P. H\"AST\"O}, Muckenhoupt weights in variable exponent spaces, Preprint (2010).

\bibitem{Garcia} {\small J. GARC\'IA-CUERVA}, Weighted $H^{p}$ spaces, Dissertations Math. 162 (1979), 1-63.

\bibitem{Grafakos} {\small L. GRAFAKOS}, Classical Fourier Analysis, 3rd edition, Graduate Texts in Mathematics, 249, 
Springer New York, 2014.

\bibitem{Harboure} {\small E. HARBOURE, R. MAC\'IAS and C. SEGOVIA}, Extrapolation results for classes of weights, Amer. J.
Math. 110 no. 3 (1988), 383-397.

\bibitem{Ho} {\small K.-P. HO},  Singular integral operators, John-Nirenberg inequalities and Triebel-Lizorkin type spaces on weighted Lebesgue spaces with variable exponents, Rev. Un. Mat. Argentina 57 (1) (2016), 85-101.
 
\bibitem{Ho1} {\small K.-P. HO}, Atomic decompositions of weighted Hardy spaces with variable exponents, Tohoku Math. J. 69 (2) (2017), 
no. 3, 383-413.

\bibitem{Ho2} {\small K.-P. HO}, Sublinear operators on weighted Hardy spaces with variable exponents, Forum Math. 31 (3) (2019), 607-617.

\bibitem{Muck} {\small B. MUCKENHOUPT and R. WHEEDEN}, Weighted norm inequalities for fractional integrals,
Trans. Amer. Math. Soc., 192 (1974), 261-274.

\bibitem{Nakai} {\small E. NAKAI and Y. SAWANO}, Hardy spaces with variable exponents and generalized Campanato spaces, J. Funct. Anal. 262 
(2012), 3665-3748.

\bibitem{Rocha2} {\small P. ROCHA}, A note on Hardy spaces and bounded linear operators, Georgian Math. J. 25 (1) (2018), 73-76.

\bibitem{Rocha3} {\small P. ROCHA}, Boundedness of generalized Riesz potentials on the variable Hardy spaces, J. Aust. Math. Soc. 104 
(2018), 255-273.

\bibitem{Rocha1} {\small P. ROCHA and M. URCIUOLO}, Fractional type integral operators on variable Hardy spaces, Acta Math. Hung. 143 
(2) (2014), 502-514.

\bibitem{Stein} {\small E. STEIN}, Harmonic Analysis: Real-Variable Methods, Orthogonality, and Oscillatory Integrals, Princeton
University Press, 1993.

\bibitem{Stein2} {\small E. STEIN and  G. WEISS}, On the theory of harmonic functions of several variables I:
The theory of $H^{p}$ spaces, Acta Math. 103 (1960), 25-62.

\bibitem{Taibleson} {\small M. H. TAIBLESON and G. WEISS}, The molecular characterization of certain Hardy spaces, Ast\'{e}risque 77 (1980), 67-149.

\bibitem{Torch} {\small J.-O. STR\"OMBERG and A. TORCHINSKY}, Weighted Hardy Spaces, Lecture Notes in Mathematics, 1381,
Springer-Verlag, Berlin, 1989.

\end{thebibliography}
\end{document}